\newtheorem{theorem}{Theorem}[section]
\newtheorem{proposition}[theorem]{Proposition}
\newtheorem{lemma}[theorem]{Lemma}
\newtheorem{claim}[theorem]{Claim}
\newtheorem{definition}[theorem]{Definition}
\newtheorem{fact}[theorem]{Fact}
\newtheorem{corollary}[theorem]{Corollary}
\newtheorem{remark}[theorem]{Remark}
\newcommand{\qed}{\relax\ifmmode\eqno\Box\else\mbox{}\quad\nolinebreak\hfill$\Box$\smallskip\fi}
\newcommand{\Zp}{{\mathbb Z}_p}
\newenvironment{proof}{{\it Proof} :}{\qed}
\newcommand{\nc}{\newcommand}
\def\vlabel{\label}
\newcommand{\Kpinf}{K^{p^{\infty}}}
\newcommand{\Gpinf}{p^{\infty} G(K)}
\nc{\K}{K[X_{\infty}]}
\nc{\KKn}{K[X_{1 \leq n},...,X_{k \leq n}]}
\nc{\KKK}{K[X_{1\infty},\ldots,X_{k\infty}]}
\nc{\KxK}{K[x_{1\infty},\ldots,x_{k\infty}]}
\nc{\Iacf}{I_{acf}}
\nc{\scftype}{(scf)-type\  }
\nc{\acftype}{(acf)-type\ }
\nc{\scftypes}{(scf)-types\ }
\nc{\acftypes}{(acf)-types\ }
\nc{\tacf}{t_{acf}}
\font\help=cmr7
\def\strsubset{\hbox{$\subseteq\kern-.8em\lower.2em\hbox{\help
\char'57}$}\,}
\def\dnfo{\,\raise.2em\hbox{$\,\mathrel|\kern-.9em\lower.35em\hbox
{$\smile$}$}}
\def\dnf#1{\lower.9em\hbox{$\buildrel\dnfo\over{  #1}$}}
\def\dfo{\;\raise.2em\hbox{$\mathrel|\kern-.9em\lower.35em\hbox{$\smile$}
\kern-.7em\hbox{\char'57}$}\;}
\def\df#1{\lower.9em\hbox{$\buildrel\dfo\over{ #1}$}}
\nc{\Conj}{\wedge \!\! \wedge} 
\newcommand{\infdef}{ $\Conj$-definable }
\newcommand{\Oc}{\mathcal{O}}
\newcommand{\A}{\mathbb{A}}
\newcommand{\Fp}{\mathbb{F}_p}
\newcommand{\N}{\mathbb{N}}
\newcommand{\Z}{\mathbb{Z}}
\newcommand{\Gm}{\mathbb{G}_m}
\newcommand{\Ga}{\mathbb{G}_a}
\newcommand{\C}{\mathcal{C}}
\title{Semiabelian varieties over separably closed fields, maximal divisible subgroups, and exact sequences}
\author{Franck Benoist\thanks{Supported by a Post Doctoral position of
    the Marie Curie European Network  MRTN CT-2004-512234 (MODNET)
    (Leeds, 2007-2008).} \thanks{Partially supported by ANR MODIG (ANR-09-BLAN-0047) Model theory and Interactions with Geometry}
\\University of Leeds and Univ. Paris-Sud
\and Elisabeth Bouscaren\footnotemark[2]\\CNRS - Univ. Paris-Sud 
\and
Anand
Pillay\thanks{Supported by a Marie Curie Excellence Chair 024052, and EPSRC grants EP/F009712/1 and EP/I002294/1.
}\\University of Leeds and University of Notre-Dame}
\begin{document}

\maketitle
\begin{abstract} Given a separably closed field $K$ of characteristic
  $p>0$ and finite  degree of imperfection  we study the
  $\sharp$-functor which takes a semiabelian variety $G$ over $K$ to 
the maximal divisible subgroup of $G(K)$. Our main result is an example where $G^{\sharp}$, as a ``type-definable group" in 
$K$, does not have ``relative Morley rank", yielding a counterexample to a claim in 
\cite{Hrushovski}. Our methods involve studying the question  of the preservation of exact sequences by the $\sharp$-functor, and relating this to issues of descent as well as model theoretic properties of $G^{\sharp}$. 
We mention some
characteristic $0$ analogues of these ``exactness-descent" results, where differential algebraic methods are more prominent. We also develop the notion of an iterative $D$-structure on a group scheme over an iterative Hasse field, which is interesting in its own right, as well as providing a uniform treatment of the characteristic $0$ and characteristic $p$ cases of ``exactness-descent". 
\end{abstract}

\section{Introduction}\vlabel{Introduction}
 
For a semiabelian variety $G$ over a separably closed field $K$ of characteristic $p>0$ and finite degree of imperfection, the group 
${\Gpinf}$ = $\cap_{n} p^{n}(G(K))$ played a big role in Hrushovski's
proof of the function field Mordell-Lang conjecture in positive characteristic.  The group ${\Gpinf}$ which we also sometimes call $G^{\sharp}$, is {\em type-definable} in the structure 
$(K,+,\cdot)$.  It was claimed in \cite{Hrushovski} (in the Remark just before
lemma 2.15) that ${\Gpinf}$ always has {\em finite relative Morley rank}. 
One of the reasons or motivations for writing the current paper is to
show that this is not the case: there are $G$ such that ${\Gpinf}$ does
not even have  relative Morley rank. (Note that however Lemma 2.15
itself does
hold, the generic type of ${\Gpinf}$ is indeed ``thin'' which implies
that ${\Gpinf}$ {\em does} have finite $U$-rank, but just  not that it has
finite relative Morley rank. The finiteness of $U$-rank suffices for all
the results  in section 4 of \cite{Hrushovski}, in particular
Proposition 4.3, to go through, 
hence the validity of the main results of \cite{Hrushovski} is
unaffected.)
Hrushovski used expressions such as ``Morley dimension" or ``internal Morley dimension" for what we call here relative Morley rank. The notion is somewhat subtle and concerns performing a Cantor-Bendixon analysis {\em inside} a closed space of types. Details and examples are given in section 2.3.

As the second author noticed some time ago, the ``relative Morley rank" problem is related in various ways to whether the 
$p^{\infty}$
(or ${\sharp}$)-functor preserves exact sequences. So another theme of the current paper is
to give conditions on an exact sequence $0\to G_{1} \to G_{2} \to G_{3}\to 0$ of semiabelian varieties over $K$ which imply exactness of the sequence $0 \to G_{1}^{\sharp} \to G_{2}^{\sharp} \to G_{3}^{\sharp} \to 0$, as well as giving situations where the sequence of $G_{i}^{\sharp}$ is NOT exact.

A third theme relates the preservation of exactness by $\sharp$ to the issue of descent of a semiabelian variety $G$ over $K$ to the field of ``constants" ${\Kpinf}$  =  $\cap_{n}K^{p^{n}}$ of $K$.

If $K$ has degree of imperfection $e$ (meaning that $K$ has dimension $p^{e}$ as a vector space over its $pth$ powers $K^{p}$), then $K$ can be equipped naturally with $e$ commuting  iterative Hasse derivations. We will, for simplicity, mainly consider the case where $e=1$  (so for example where $K = {\Fp}(t)^{sep}$), in which case we have a single iterative Hasse derivation $(\partial_{n})_{n}$  whose field of absolute constants is ${\Kpinf}$. 
This differential structure on $K$ will play a role in some proofs, by virtue of so-called D-structures on varieties over $K$. 

The analogue in characteristic $0$ of the differential field $(K,(\partial_{n})_{n})$ is simply a differentially closed field $(K,\partial)$ (of characteristic zero).  And for an abelian variety $G$ over our characteristic 
$0$ differentially closed field $K$  we have what is often called the ``Manin kernel" for $G$, the smallest Zariski-dense ``differential algebraic" subgroup of $G(K)$, which we denote again by $G^{\sharp}$. The issues of preservation of exactness by $\sharp$ and the relationship to descent to the field ${\cal C}$ of constants, make sense in characteristic $0$ too.

In characteristic $p$, it is possible to obtain our results with a purely algebraic approach using $p$-torsion and Tate modules (carried out in section \ref{charp}). In characteristic $0$, we need to use differential algebraic methods, in particular D-structures. But in fact 
the algebraic proofs given in characteristic $p$ can also be seen as involving D-structures and we take the opportunity of giving such a uniform proof in all characteristics in section \ref{sectionuniform}.

Our paper builds on earlier work by the second author and 
Fran\c coise Delon \cite{BD2} where among other things, the groups $G^{\sharp}$ (in positive characteristic) are characterized as precisely the commutative divisible type-definable groups in separably closed fields.
Our results, especially in characteristic $0$, are also influenced by and closely related to themes in the third author's joint paper with Daniel Bertrand \cite{BePi}.

Let us now describe the content and results of the paper.


Section 2 recalls key notions and facts about differential fields, and semiabelian varieties over separably closed fields. We also discuss relative Morley rank, preservation of descent under isogeny,  and some properties of $p^{\infty}G(K)$.       

In section 3 we introduce the $\sharp$-functor in all characteristics and begin relating relative Morley rank to exactness. 

Section \ref{charp} concentrates on the characteristic $p$ case. We begin by making some observations about  descent of semiabelian
varieties and Tate modules,  proving for
example that an ordinary semiabelian variety $G$
descends to the constants of $K$ if and only if all of the (power of $p$)-torsion of $G$ is $K$-rational (section \ref{TorsionTate}).
We make no claim that our results on descent are especially novel, and we would not be surprised if they were explicit or implicit in the literature on semiabelian varieties in positive characteristic. 
However we were unable to find precise references in spite of consulting several experts. 
In section \ref{exactnessdescent}, we answer the original question which motivated this paper. In Proposition \ref{Maindescentcarp}, we show that if $0\to G_1 \to G_2 \to G_3 \to 0$ is an
exact sequence of ordinary semiabelian varieties such that $G_{1}$ and $G_{3}$ descend to the constants, ${\cal C}$, then the sequence of $G_{i}^{\sharp}$'s is exact if and only if $G_{2}$ descends to ${\cal C}$.
This yields an example of a semiabelian variety $G$ such that $G^{\sharp}$ does not have relative Morley rank (in fact the example is simply any nonconstant extension of a constant ordinary abelian variety by an algebraic torus). See Corollary 4.14, which as mentioned above is among the main results of our paper.
The remainder of section \ref{charp} contains both positive and negative results about preservation of exactness by $\sharp$ in various situations. In particular we give an example of an exact sequence of ordinary abelian varieties for
which the $\sharp$-functor does not preserve exactness. This cannot happen in characteristic $0$ as shown in the next section.

In section \ref{sectionuniform}, we switch to differential algebraic methods in order to treat uniformly both characteristic $0$ and characteristic $p$. In section \ref{sectionDstructures} 
we recall the definition of D-structures for group schemes and the fact that a semiabelian
variety $G$ over a Hasse field $K$ descends to the constants of $K$ if and only if $G$ admits an iterative D-structure. 

In order to relate exactness of the $\sharp$-functor and descent in characteristic $0$, we use, as in \cite{BePi}, the universal extension $\tilde G$ of $G$ by a vector group, which always
admits a unique D-structure. In characteristic $p$, we need to replace this universal extension by a ($p$-divisible) proalgebraic group, also called $\tilde G$. In section \ref{Gtildecharp}, in characteristic $p$, we endow $\tilde G$ with an iterative D-structure
and prove the characteristic $p$ version of the characteristic $0$ results relating descent and the D-structure on $\tilde G$. Finally, in section \ref{unifstat}, we can then give a uniform proof, in all characteristics (Proposition \ref{Maindescent}), 
of the fundamental result (Proposition 
\ref{Maindescentcarp})
proved previously in 
characteristic $p$.



We should say that, as far as ``algebraic geometry" is concerned this paper is elementary, and, even in section \ref{unifstat} does not make heavy use of modern methods.  The reader is referred to \cite{Conrad} for a modern scheme-theoretic treatment of descent, $K/k$-trace etc.,  for abelian varieties in positive characteristic. As is pointed out there, much of the literature on such questions and on important results such as the Lang-Neron theorem, remains in the language of  Weil. The same will be to some extent true of the current paper, where our real aim and motivation is to understand $G(K)$ as a definable group in the structure
$(K,+,\cdot)$, as well as its type-definable subgroups.


Elisabeth Bouscaren 
would like to thank particularly  Ehud Hrushovski and
 Fran\c coise Delon for numerous discussions in the past years on  the
 questions addressed in this paper. Grateful thanks  from all 
 three authors go especially  to Daniel Bertrand and Damian R\"ossler for numerous
 and enlightening discussions. Among the many others who have helped with
 explanations or discussions with some of the authors, let us
 give special thanks to Jean-Benoit Bost, Antoine Chambert-Loir, Marc
 Hindry, Minhyong Kim and Thomas Scanlon. Finally thanks to the referees
 of earlier versions  for their  comments, and   particular thanks to the
 final  referee for his/her careful reading and numerous extremely
 helpful remarks and  suggestions.

\section{Preliminaries}

\subsection{Hasse fields}\vlabel{Hasseprelim} 
We summarise here basic facts and notation about the fields $K$ that
concern us. More details can be found in \cite{BeDe},
\cite{Ziegler1} for the characteristic $p$ case  and \cite{marker} for
the characteristic zero case. 

\vspace{2mm}
\noindent
If $K$ is a separably closed field of characteristic $p >0$ then the dimension of $K$ as a vector
space over the field $K^{p}$ of $p^{th}$ powers is infinite or a power
$p^{e}$ of $p$. In  the second case, $e$  is called the degree of
imperfection (we will just say the ``invariant")  of $K$ and we will be interested in the case when $e \geq 1 $ (and often when $e=1$).  For $e$ finite, a $p$-basis of $K$ is a set $a_{1},..,a_{e}$ of elements of $K$ such that 
$\{a_{1}^{n_{1}}a_{2}^{n_{2}}...a_{e}^{n_{e}}: 0\leq n_{i} < p^e\}$ form a basis of $K$ over $K^{p}$.

\vspace{2mm}
\noindent
The first order theory of separably closed fields of characteristic $p>0$ and invariant $e$ (in the language of rings) is complete (and model complete). We call the theory $SCF_{p,e}$. It is also stable (but not superstable) and certain natural (inessential) expansions that we mention below, have quantifier elimination.

\vspace{2mm}
\noindent
For $R$ an arbitrary ring (commutative with a $1$), an {\em iterative Hasse derivation} $\partial$ on $R$ is a sequence $(\partial_{n}:n = 0,1,...)$ of additive maps from $R$ to $R$ such that
\newline
(i) $\partial_{0}$ is the identity,
\newline
(ii) for each $n$, $\partial_{n}(xy) = \sum_{i+j = n}\partial_{i}(x)\partial_{j}(y)$, and
\newline
(iii) for all $i,j$,  $\partial_{i}\circ\partial_{j} = {i+j\choose i}\partial_{i+j}$ (iterativity).

\vspace{2mm}
\noindent
Note that $\partial_{1}$ is a derivation, and that when $R$ has characteristic $0$, $\partial_{n} = \partial_1^{n}/n!$ (So in
the characteristic $0$ case the whole sequence $(\partial_{n})_{n}$ is determined by $\partial_{1}$.)\\
In some rare cases we will speak about non iterative Hasse derivation, meaning that the third condition is not required.

\vspace{2mm}
\noindent
 By the {\em constants} of
  $(R,(\partial_{n})_{n\geq 0})$ one usually  means $\{r\in R:\partial_{1}(r) = 0\}$ and by the 
{\em absolute constants} $\{r\in R: \partial_{n}(r) = 0$ for all
$n>0\}$. In this paper, we will mainly consider the field of absolute
constants, denoted $\C$, and refer to them  in the sequel as ``the constants''.

\vspace{2mm}
\noindent
If $\partial^{1}$ and $\partial^{2}$ are iterative Hasse derivations on $R$ we say that they commute if each $\partial^{1}_{i}$ commutes with each $\partial^{2}_{j}$. 

\begin{fact} (i) If $K$ is a separably closed field of invariant $e \geq
  1$, then there are commuting  iterative Hasse derivations $\partial^{1},..,\partial^{e}$ on $K$ such that the common constants of $\partial_1^{1},..,\partial_1^{e}$ is $K^{p}$. In this case the common (absolute) constants of $\partial^{1},..,\partial^{e}$ is the field $K^{p^{\infty}} = \cap_{n}K^{p^{n}}$. 
\newline 
(ii) Moreover in (i), if $a_{1},..,a_{e}$ is  a $p$-basis of $K$, then each $\partial^{i}_{j}$ is definable
in the field $K$ over parameters consisting of the $a_{1},..,a_{e}$ and their images under the maps $\partial^{n}_{m}$  ($n = 1,..,e$, $m \geq 0$). 
\newline
(iii) The theory $CHF_{p,e}$ of separably closed fields of degree $e$,
equipped with $e$ commuting iterative Hasse derivations
$\partial^{1},..,\partial^{e}$, whose common field of constants is
$K^{p}$, is complete, stable,  with quantifier elimination (in the language of rings
together with unary function symbols for each $\partial^{i}_{n}$, $i=1,..,e$, $n > 0$). 
\end{fact}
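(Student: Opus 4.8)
The plan is to build the derivations in (i) explicitly from the coordinate ($\lambda$-)functions attached to the $p$-basis, which simultaneously makes the definability in (ii) transparent, and then to obtain (iii) by transferring the known quantifier elimination and stability of $SCF_{p,e}$. Fix a model $K$ with $p$-basis $a_1,\dots,a_e$. The starting point is that for every $m$ the monomials $\{\bar a^{\bar k}=a_1^{k_1}\cdots a_e^{k_e}:0\le k_i<p^m\}$ form a basis of $K$ over $K^{p^m}$, so each $x\in K$ has a unique expansion $x=\sum_{\bar k}\lambda_{m,\bar k}(x)^{p^m}\,\bar a^{\bar k}$ with $\lambda_{m,\bar k}(x)\in K$.

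For each $i$ I would define $\partial^i=(\partial^i_n)_n$ by the rule that packages ``shifting the variable $a_i$ by an infinitesimal'': for $n<p^m$ set $\partial^i_n(x)=\sum_{\bar k}\binom{k_i}{n}\lambda_{m,\bar k}(x)^{p^m}\,a_i^{k_i-n}\prod_{j\ne i}a_j^{k_j}$. One first checks this is independent of the choice of $m$ (coherence of the expansions), equivalently that the associated map $E^i:K\to K[[T]]$, $x\mapsto\sum_n\partial^i_n(x)T^n$, is the well-defined ring homomorphism substituting $a_i\mapsto a_i+T$. Granting this, the iterative Hasse axioms and the commutativity of distinct $\partial^i,\partial^j$ become formal consequences: multiplicativity of $E^i$ gives the Leibniz rule, co-associativity of the substitution gives iterativity, and commuting the two substitutions $a_i\mapsto a_i+T$, $a_j\mapsto a_j+S$ gives $[\partial^i,\partial^j]=0$. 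The key computational input here is the behaviour of Hasse derivations on Frobenius, namely that $\partial^i_n$ annihilates every $p^m$-th power when $0<n<p^m$; this is what legitimises the displayed formula. Reading off the $m=1$ expansion, $\bigcap_i\ker\partial^i_1=K^p$, and intersecting over all $m$ gives common absolute constants $\bigcap_m K^{p^m}=\Kpinf$, proving (i). Note that this construction does \emph{not} go through ``$K$ separably algebraic over $\C(\bar a)$'', which fails in rich models where $K$ has infinite transcendence degree over $\C$.

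Part (ii) then comes for free from the construction: the coordinate functions $\lambda_{m,\bar k}$ are definable in the pure field $K$ over the parameters $a_1,\dots,a_e$ (one cuts out $K^{p^m}$ by $\exists y\,(z=y^{p^m})$ and solves the resulting linear system in the basis $\bar a^{\bar k}$), so the displayed formula for $\partial^i_n$ exhibits it as definable over $a_1,\dots,a_e$ together with the finitely many values $\partial^l_m(a_j)$ occurring as coefficients.

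For (iii) I would avoid proving quantifier elimination from scratch and instead exploit bi-interpretability with $SCF_{p,e}$, for which quantifier elimination (in the language with the $\lambda$-functions) and stability are known. In one direction (ii) already expresses the $\partial^i_n$ as $\lambda$-definable; in the other direction the defining formula for $\partial^i_n$ can be inverted---a Vandermonde-type linear system in the $\binom{k_i}{n}$ recovers each $\lambda_{m,\bar k}(x)^{p^m}$, hence $\lambda_{m,\bar k}(x)$ after the (definable, bijective) $p^m$-th root---so the $\lambda$-functions are in turn term-definable from the $\partial^i_n$ and the $p$-basis. This mutual term-definability transports quantifier elimination and stability to $CHF_{p,e}$; completeness then follows since the prime field $\Fp$, with all $\partial^i_n$ ($n\ge1$) vanishing, is a common substructure of every model. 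I expect the genuine obstacles to be exactly the two verification points flagged above: confirming coherence and the Hasse axioms for the coordinate-defined $\partial^i$ (which rests on the Frobenius computation), and making the inversion in the bi-interpretability precise enough to carry quantifier elimination across---the remaining assertions are then formal.
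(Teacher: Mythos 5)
The paper itself offers no proof of this Fact: it is quoted as background, with details deferred to \cite{BeDe} and \cite{Ziegler1}, so your attempt has to be judged against the arguments in those references. Measured that way, your parts (i) and (ii) are essentially the standard construction and are sound. Defining $E^i(x)$ as the image of $x$ under the substitution $a_i\mapsto a_i+T$ in the expansion $x=\sum_{\bar k}\lambda_{m,\bar k}(x)^{p^m}\bar a^{\bar k}$ is legitimate because $K\simeq K^{p^m}[X_1,\dots,X_e]/(X_1^{p^m}-a_1^{p^m},\dots,X_e^{p^m}-a_e^{p^m})$, so the substitution is a well-defined ring homomorphism into $K[T]/(T^{p^m})$, coherent as $m$ grows; the Hasse axioms, iterativity, commutation and the computation of the constants then follow as you indicate. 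Moreover your key computational input (that $\partial^i_n$ kills $p^m$-th powers for $0<n<p^m$, which holds for \emph{any} Hasse derivation by a Leibniz/orbit-counting argument) actually yields (ii) in the generality the Fact intends, i.e.\ for arbitrary derivations as in (i), via $\partial^i_n(x)=\sum_{\bar k}\lambda_{m,\bar k}(x)^{p^m}\,\partial^i_n(\bar a^{\bar k})$; this is exactly why the statement includes the images of the $p$-basis among the parameters.

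The genuine gap is in (iii). Stability does transfer by your route: after naming a $p$-basis and its images under the derivations, a model of $CHF_{p,e}$ is a definitional expansion of a model of $SCF_{p,e}$ with constants, and stability survives naming constants, definitional expansions, and reducts. But quantifier elimination and completeness do not follow from ``mutual (term-)definability over the $p$-basis''. First, the interdefinability you set up is only \emph{over the parameters} $a_1,\dots,a_e$, and QE is not preserved when named constants are removed from the language: your transfer gives at best that every formula is equivalent to a quantifier-free formula \emph{with parameters from a $p$-basis}, whereas the Fact asserts QE in the ring-plus-$\partial^i_n$ language with no such constants. (Compare: $SCF_{p,e}$ has QE with the $\lambda$-functions and a named $p$-basis, but not in the pure ring language.) Concretely, the substructure-completeness criterion for QE breaks down: $\mathbb{F}_p$ with zero derivations is a substructure of every model of $CHF_{p,e}$, but it contains no $p$-basis and so is not a substructure in the $\lambda$-language sense, and QE for $SCF_{p,e}$ says nothing about extending embeddings over it. Second, an arbitrary model of $CHF_{p,e}$ comes equipped with abstract derivations, not ones produced by your recipe; to run your translation uniformly across models (which is needed even for completeness, since the translation involves the parameters $\partial^i_n(a_j)$, whose types are a priori uncontrolled) you must first show that every model admits a $p$-basis adapted to its derivations, i.e.\ with $\partial^i_n(a_j)=\delta_{ij}\delta_{n1}$ for $n\geq 1$. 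This ``canonical $p$-basis'' existence statement is a genuine lemma that your proposal never addresses. Ziegler's proof of (iii) in \cite{Ziegler1} is a direct back-and-forth in the Hasse language precisely because of these obstructions; to close your argument you would need either such a direct proof, or the canonical-basis lemma together with an argument that eliminates the basis parameters from the resulting quantifier-free formulas.
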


Note that after adding names for a $p$-basis $a_{1},..,a_{e}$ of the separably closed field $K$, we obtain for each $n$ a basis $1,d_{1},..,d_{p^{n}-1}$ of $K$ over $K^{p^{n}}$, and the functions $\lambda_{n,i}$ such that 
$x = \sum_{i} (\lambda_{n,i}(x))^{p^{n}}d_{i}$ for all $x$ in $K$, are definable with parameters $a_{1},..,a_{e}$ in the field $K$. The theory of separably closed fields also has quantifier elimination in the language with symbols for a $p$-basis and for each $\lambda_{n,i}$.  The relation between the $\lambda$-functions and the $\partial^{i}_{j}$ is given in section 2 of \cite{BeDe}.

In the current paper we concentrate on the iterative Hasse derivation
formalism. In fact when we mention separably closed fields $K$ with an
iterative Hasse structure, we will usually assume that $e = 1$ and so 
$K$ is equipped with a single iterative Hasse derivation $\partial =
(\partial_{n})_{n}$. The basic example is ${\mathbb F}_{p}(t)^{sep}$
(where $^{sep}$ denotes separable closure) with  $\partial_{1}(t)
= 1$ and $\partial_{i}(t) = 0$ for all $i>1$.
The assumption that $e =1$ is made here for the sake of simplicty, as
some of the results we will be quoting are only explicitly written out
for this case, but it will be  no real restriction, thanks to: 

\begin{fact}\vlabel{invariant1} (see for example \cite{BeDe}) Let $K_0$ be an algebraically
  closed field of characteristic $p$, and $K_1$ a finitely generated
  extension of $K_0$. Then there is a separably
  closed field $K$ of degree of imperfection $1$, extending $K_1$ and
  such that $K_0 = K^{p^\infty}$.\end{fact}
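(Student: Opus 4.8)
The plan is to build $K$ as the separable closure of the Laurent series field $E = K_0((t))$, with $t$ a single indeterminate. This field is tailor-made for the problem. On one hand $K_0$ is perfect, so $E^p = K_0((t))^p = K_0((t^p))$ and $\{1,t,\dots,t^{p-1}\}$ is a basis of $E$ over $E^p$, giving $E$ degree of imperfection $1$ with $p$-basis $\{t\}$; on the other hand $E$ is a complete discretely valued field with algebraically closed residue field $K_0$ and, crucially, \emph{infinite} transcendence degree over $K_0$. That infinite transcendence degree is exactly what lets a $K_1$ of arbitrarily large transcendence degree sit inside a field whose degree of imperfection is only $1$ (these can coexist only for extensions that are separable but not separably generated). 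First I would record the easy computation $E^{p^\infty}=\bigcap_n K_0((t^{p^n}))=K_0$: a Laurent series lying in every $K_0((t^{p^n}))$ can only have its degree-$0$ term.

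Next I would embed $K_1$ into $K=E^{sep}$ over $K_0$. Since $K_0$ is algebraically closed, hence perfect, the finitely generated extension $K_1/K_0$ is separable and therefore admits a separating transcendence basis $c_1,\dots,c_d$, with $K_1$ finite separable over $K_0(c_1,\dots,c_d)$. As $\mathrm{trdeg}(E/K_0)$ is infinite I may pick $g_1,\dots,g_d\in E$ algebraically independent over $K_0$ and set $c_i\mapsto g_i$, giving a $K_0$-embedding of $K_0(c_1,\dots,c_d)$ into $E\subseteq K$. Because $K$ is separably closed, the image of a primitive element's (separable) minimal polynomial has a root in $K$, so the embedding extends to all of $K_1$. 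Identifying $K_1$ with its image yields $K_0\subseteq K_1\subseteq K$ with $K$ separably closed.

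The real content, and the step I expect to be the main obstacle, is to check that passing to the separable closure changes neither the degree of imperfection nor the field of constants, i.e. that $[K:K^p]=p$ and $K^{p^\infty}=K_0$. The first is the standard fact that a $p$-basis stays a $p$-basis in any separable algebraic extension, so $\{t\}$ remains a $p$-basis of $K$ (see \cite{BeDe}). For the second I would first note that $K^{p^\infty}$ is the largest perfect subfield of $K$ and, being perfect and separably closed, is algebraically closed; since it contains $K_0=E^{p^\infty}$, it suffices to rule out an element $\gamma\in K^{p^\infty}$ transcendental over $K_0$. Here is the key point: for such a $\gamma$ and each $n$ we have $\gamma^{1/p^n}\in K=E^{sep}$, so $E(\gamma^{1/p^n})/E(\gamma)$ is at once separable (it lies in $E^{sep}$) and purely inseparable (as $(\gamma^{1/p^n})^{p^n}=\gamma$), whence $\gamma^{1/p^n}\in E(\gamma)$. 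Thus the finite extension $E(\gamma)$ of the complete field $K_0((t))$ contains all $p^n$-th roots of $\gamma$. Because $K_0$ is algebraically closed, $E(\gamma)$ is again a Laurent series field $K_0((s))$ over a uniformizer $s$; writing $\gamma=\sum_k c_k s^k$ and $\gamma^{1/p^n}=\sum_k d_k s^k$, the relation $\gamma=\sum_k d_k^{p^n}s^{kp^n}$ forces $c_m=0$ whenever $p^n\nmid m$, for every $n$, so $\gamma=c_0\in K_0$, a contradiction. Hence $K^{p^\infty}=K_0$, and $K$ is the desired field. The only delicate inputs are the two structural facts used for $E(\gamma)$ — that a finite (separable) extension of $K_0((t))$ with algebraically closed residue field is totally ramified of the form $K_0((s))$, and the invariance of the degree of imperfection under separable closure — both classical and of exactly the kind recorded in \cite{BeDe}.
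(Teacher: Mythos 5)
The paper itself gives no argument for this Fact --- it is quoted with a pointer to \cite{BeDe} --- so your construction is a self-contained alternative rather than a variant of anything in the text, and it is worth comparing on its own merits. The construction is sound, and its pillars are all correct: $E=K_0((t))$ has $p$-basis $\{t\}$ and $E^{p^\infty}=\bigcap_n K_0((t^{p^n}))=K_0$ by the support argument; a $p$-basis of $E$ stays a $p$-basis of the separable algebraic extension $K=E^{sep}$, so $K$ has degree of imperfection $1$; the embedding of $K_1$ uses only that $K_1/K_0$ is separably generated ($K_0$ perfect) and that $K$ is separably closed; and the key step --- for $\gamma\in K^{p^\infty}$, each $E(\gamma^{1/p^n})/E(\gamma)$ is simultaneously separable (it sits inside $E^{sep}$) and purely inseparable, hence trivial, so all $p$-power roots of $\gamma$ lie in the single complete field $E(\gamma)\cong K_0((s))$, where comparison of supports forces $\gamma\in K_0$ --- is correct, with the two classical inputs (Cohen-type structure of finite extensions of $K_0((t))$ with algebraically closed residue field, invariance of the degree of imperfection under separable algebraic extensions) accurately identified. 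Note that this argument in fact shows directly that \emph{every} element of $K^{p^\infty}$ lies in $K_0$; the algebraic/transcendental dichotomy is dispensable.

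The one point where you lean on an unproved and genuinely nontrivial assertion is the claim that $K_0((t))$ has infinite transcendence degree over $K_0$. This is exactly what makes the embedding of $K_1$ possible, and it is true for every field $K_0$, but it is not free. When $K_0$ is countable (e.g. $\overline{\mathbb{F}_p}$) it follows from cardinality, since $|K_0((t))|=2^{\aleph_0}$; but the Fact is used in the paper (Corollary \ref{allptorsion}) for an arbitrary algebraically closed $K_0$, and if $|K_0|^{\aleph_0}=|K_0|$ (e.g. $|K_0|=2^{\aleph_0}$) every counting argument fails. One then needs an actual construction of arbitrarily large finite algebraically independent families of power series --- for instance lacunary series with factorial-type gaps, whose independence in characteristic $p$ requires care, as the example $f=\sum_n t^{p^n}$, which satisfies $f^p-f+t=0$, shows that lacunarity alone does not even give transcendence --- or a citation for this classical fact. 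Supply that one reference or argument and your proof is complete.
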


\vspace{2mm}
\noindent
Our characteristic $0$ analogue is simply a differentially closed field
$(K,\partial)$ of characteristic $0$, where now $\partial$ is the single
distinguished derivation (rather than a sequence). The corresponding
first order theory is $DCF_{0}$, in the language of rings together with
a symbol for $\partial$. The theory $DCF_{0}$ is complete with quantifier elimination, but is now $\omega$-stable. 

\medskip

\subsection{Varieties, semiabelian varieties and separable morphisms}\vlabel{prelimcharp}

From now on, $K$ is an algebraically closed field of characteristic $0$, or a separably closed field of
characteristic $p$ and of finite degree of imperfection $e\ge 1$, and $\overline K$ denotes an
algebraic closure of $K$.


As already mentioned in the introduction, we will use mainly Weil type
language in this paper, except in section \ref{Gtildecharp}. A {\em variety over $K$}, or {\em defined over
  $K$}, will always be a separated reduced scheme of finite type over
$K$.  We denote by $V(L)$ the set of $L$-rational points of $V$, for $L$ an extension of $K$. Recall
that when $K$ is separably closed,  and $V$ is over $K$, $V(K)$ is
Zariski dense in $V$. We will often identify $V$ with its set of geometric points $V(\overline K)$.
For $L$ an extension of $K$, we will denote $V_L=V \times_K L$ (extension of scalars or base change).


Recall that if $V$ and $W$ are two irreducible varieties over $K$, and
$f$ is a dominant $K$-morphism from $V$ to $W$, $f$ is said to be {\em separable}
if the function field extension $K(W)\subset  K(V)$ is separable.\\
The following is classical. For the convenience of the reader, we
include a short (model-theoretic)  proof in the Appendix A. 


\begin{fact} \vlabel{separablemorphisms} Let $G, H$ be two connected 
algebraic  groups defined over $K$ and $f$ a
dominant separable homomorphism from $G$ to $H$ (equivalently a
surjective separable homomorphism from  $G(\overline K)$ onto
$H(\overline K)$). Then $f$ takes $G(K)$ surjectively onto 
$H(K)$. \end{fact}

{\em In this paper we will only consider  exact sequence of algebraic groups  
$$ 0 \rightarrow G_1 \stackrel{g}{\rightarrow}
  G_2\stackrel{f}{\rightarrow} G_3\rightarrow 0$$  such that both
  morphisms are separable}. These are sometimes called strict exact
sequences (\cite{Serrebook}). We will say also that $G_2$ is an algebraic group extension
  of $G_3$ by $G_1$, denoted by  $G_2 \in EXT(G_3,G_1)$. By the assumption of
  separability of the morphisms,   $G_3$ is isomorphic (as an algebraic group) to
  $G_2/g(G_1)$ and $G_1$ is isomorphic to a closed subgroup of $G_2$.   

We will say that the   exact sequence  {\em is  over $K$}    if  the groups $G_1,G_2,
  G_3$ are algebraic groups over $K$ and   $f,g$ are separable $K$-morphisms
  of algebraic groups.



We now  recall some very basic facts about semiabelian varieties.
 We will be particularly interested in rationality issues,
that is in the groups of $K$-rational points of some basic subgroups of $G(K)$. There
are many classical references for  abelian varieties (for
example \cite{Mumford}, or \cite{Lang}).  For the case of tori, see for
example \cite{Borel}.  

It is then easy to obtain the corresponding facts for the case of
arbitrary semiabelian varieties.

Recall that a {\em semiabelian}  variety $G$ (over $K$) is an extension of an abelian 
variety by a torus, i.e.  
$$0 \rightarrow T \rightarrow
  G\rightarrow  A\rightarrow 0$$
where  $T$ is a torus over $K$, $A$ is an abelian  variety
  over $K$ and the two morphisms
  are separable $K$-morphisms ($G$ is then also an algebraic group over $K$).

The following facts hold when $K$ is separably closed: 

\begin{fact} \vlabel{subgroups}(i) Let $T$ be a torus over $K$. Then $T$ is
  $K$-split, that is $T$ is isomorphic {\em over $K$} to some product of the
  multiplicative group,  $({\Gm})^{\times n}$. Any closed
  subgroup of $T_{\overline K}$ is then also defined over $K$.\\
(ii) Semiabelian varieties are commutative and divisible,
  i.e. $G(\overline K ) $, the group of $\overline K$-rational points
  of $G$ is a commutative divisible group. \\
(iii) Let $G$ be a semiabelian variety over $K$, then any closed
  connected subgroup of $G_{\overline K}$ is defined over $K$. 
\end{fact}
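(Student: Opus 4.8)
The three parts have rather different flavours, so I would treat them in turn and reserve most of the effort for (iii).

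For (i), the plan is to use the character group $X^{*}(T)$, a finitely generated free abelian group carrying a continuous action of $\mathrm{Gal}(K^{sep}/K)$. Since $K$ is separably closed this Galois group is trivial, so the action is trivial and $T$ is split, i.e. $K$-isomorphic to $(\Gm)^{\times n}$. For the statement on subgroups, I would use that closed subgroups of $(\Gm)^{\times n}$ are diagonalizable and correspond to subgroups $M \leq \Z^{n} = X^{*}((\Gm)^{\times n})$; the subgroup attached to $M$ is cut out by the monomial equations $\prod_{i} x_{i}^{m_{i}} = 1$ for $(m_{i}) \in M$, whose coefficients lie in the prime field and which are therefore defined over $K$.

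For (ii), commutativity would come from the structure of $G$. The torus $T$ is central: the conjugation action of $G$ on the commutative group $T$ factors through the connected group $A$ and lands in the discrete group $\mathrm{Aut}(T)$, hence is trivial. Thus $G$ is a central extension of the commutative complete variety $A$ by $T$, and the commutator map factors through a morphism $A \times A \to T$ from a complete variety to an affine one, which must be constant; so $G$ is commutative. For divisibility I would argue that for each $n$ the map $[n] : G \to G$ is a homomorphism whose kernel $G[n]$ is finite, since the $n$-torsion of $T$ and of $A$ is finite and $G[n]$ is an extension of $A[n]$ by $T[n]$; as $G$ is connected the image of $[n]$ has full dimension and so equals $G$, giving surjectivity of $[n]$ on $\overline{K}$-points and hence divisibility of $G(\overline{K})$.

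Part (iii) is the substantive one. Given $0 \to T \to G \xrightarrow{\pi} A \to 0$ and a connected closed subgroup $H \leq G$, I would first record the structural constituents: $H \cap T$ is a connected subgroup of the torus, hence defined over $K$ by (i), and $\pi(H)$ is an abelian subvariety of $A$, hence defined over $K^{sep} = K$ (abelian subvarieties always descend to the separable closure). This shows the torus and abelian pieces of $H$ are unproblematic, and reduces matters to descending $H$ itself through inseparability. Since $K$ is separably closed, $\overline{K}$ is the perfect closure $K^{1/p^{\infty}}$, so $H$ has a field of definition $L$ with $L^{p^{n}} \subseteq K$ for some $n$, and the only danger is that $L \not\subseteq K$.

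The main obstacle is precisely this inseparable descent, and I would confront it with Frobenius. Applying the $n$-fold relative Frobenius $F^{n} : G \to G^{(p^{n})}$, a $K$-homomorphism, the image $H^{(p^{n})} = F^{n}(H)$ has equations whose coefficients are the $p^{n}$-th powers of those defining $H$, hence lie in $L^{p^{n}} \subseteq K$; thus $H^{(p^{n})}$ is defined over $K$. Recovering $H$ over $K$ is the delicate point: $F^{n}$ is bijective on $\overline{K}$-points, so $H$ is the unique reduced subgroup with $F^{n}(H) = H^{(p^{n})}$, namely the maximal smooth subgroup of the $K$-group scheme $(F^{n})^{-1}(H^{(p^{n})})$. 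Here I would invoke the standard fact that a smooth connected subgroup of a $K$-group descends to $K^{sep}$ — equivalently, that the field of definition of such a subgroup is separable over $K$, the group law ruling out the genuinely inseparable behaviour allowed for arbitrary subvarieties. Since $K$ is separably closed this forces $H$ to be defined over $K$. I expect this separable-descent step (the passage from $H^{(p^{n})}$ back to the reduced, maximal smooth Frobenius preimage) to be the crux; the torus and abelian constituents identified above are then automatically over $K$, completing the argument.
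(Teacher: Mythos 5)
Your parts (i) and (ii) are correct and standard: triviality of the Galois action on the character group gives the splitting, rigidity of diagonalizable groups gives the statement on subgroups, and centrality of $T$ plus ``a morphism from a complete variety to an affine one is constant'' gives commutativity, with divisibility following from finiteness of $n$-torsion and connectedness. (Note that the paper itself states this Fact without proof, as classical material cited to the literature, so you are only accountable for the correctness of your own argument.)

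Part (iii), however, has a genuine gap, and it sits exactly where you located the crux. The ``standard fact'' you invoke --- that a smooth connected subgroup of a $K$-group has separable field of definition, ``the group law ruling out the genuinely inseparable behaviour'' --- is \emph{false} for general algebraic groups. Counterexample: let $K={\Fp}(t)^{sep}$ and let $\phi:{\Ga}^2\to{\Ga}$ be the $K$-homomorphism $(x,y)\mapsto y^p-tx^p$. Over $\overline{K}$ the reduced kernel is the line $y=t^{1/p}x$, a smooth connected closed subgroup of ${\Ga}^2$ whose minimal field of definition contains $t^{1/p}$, hence is not contained in $K=K^{sep}$. The same example shows that your Frobenius recovery step collapses: here $F(H)$ is the line $v=tu$, defined over $K$, and $F^{-1}(F(H))$ is the $K$-group scheme $\ker\phi$, but its maximal smooth $K$-subgroup is $\{0\}$, not $H$; forming the reduced (or maximal smooth) subgroup does not commute with inseparable base change, which is precisely the phenomenon in question. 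So for semiabelian $G$ the separable-descent statement is true, but it \emph{is} statement (iii); invoking it makes the argument circular at its only substantive point.

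What actually closes the gap is the feature special to semiabelian varieties: dense \'etale torsion. A connected closed subgroup $H\leq G$ is itself semiabelian (an extension of an abelian subvariety of $A$ by a subtorus of $T$), so its prime-to-$p$ torsion $H[p']$ is Zariski dense in $H$. Since multiplication by $n$ with $p\nmid n$ is \'etale, all of $G[p']$ consists of points rational over $K^{sep}=K$ (this is recorded in Fact \ref{torsion} of the paper). Hence $H$ is the Zariski closure of a set of $K$-rational points, and such a closure is always defined over $K$: the vanishing ideal in $\overline{K}[x]$ of a set of $K$-points is the extension of its intersection with $K[x]$, because vanishing at a $K$-point is a $K$-linear condition on coefficients and solving linear systems commutes with field extension. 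This argument also proves, rather than assumes, your auxiliary claim that abelian subvarieties descend to the separable closure.
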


\begin{definition}
Let $K_0 \subset K_1$ be an extension of fields, and $G$ an algebraic group over $K_1$. We will say that 
$G$ descends to $K_0$ if $G$ is isomorphic to $H_{K_1}$ for some algebraic group $H$ over $K_0$.
\end{definition}


As semiabelian varieties are defined as extensions, one should check
what descent exactly means in that case. The following fact, 
which follows from classical manipulations on $EXT(A,T)$ (see for example
\cite{Serrebook}), deals with this question.

\begin{fact} \vlabel{descsemiab}
Let $K_0\subset K_1$ be separably closed fields, and $G$ a semiabelian variety defined over $K_1$, which is an extension of $A$ by $T=(\Gm^n)_{K_1}$. If $G$ descends to $K_0$, i.e. if $G$ is isomorphic to $(G_0)_{K_1}$ for some
semiabelian variety $G_0$ over $K_0$, then we have the following
~\\
\begin{center}
\begin{pspicture}(7,2)
\rput(0,2){\rnode{A}{$0$}}
\rput(1,2){\rnode{B}{$T$}}
\rput(3,2){\rnode{C}{$G$}}
\rput(5,2){\rnode{D}{$A$}}
\rput(7,2){\rnode{E}{$0$}}
\rput(0,0){\rnode{A0}{$0$}}
\rput(1,0){\rnode{B0}{$T$}}
\rput(3,0){\rnode{C0}{$(G_0)_{K_1}$}}
\rput(5,0){\rnode{D0}{$(A_0)_{K_1}$}}
\rput(7,0){\rnode{E0}{$0$}}
\psset{arrows=->,nodesep=3pt,shortput=tablr,linewidth=0.1pt}
\ncline{A}{B}
\ncline{B}{C}^{$i$} 
\ncline{C}{D}^{$f$}
\ncline{D}{E}
\ncline{A0}{B0}
\ncline{B0}{C0}^{$i_0$} 
\ncline{C0}{D0}^{$f_0$}
\ncline{D0}{E0}
\ncline{B}{B0}>{$id$}
\ncline{C}{C0}>{$g$}
\ncline{D}{D0}>{$h$}
\end{pspicture}
\end{center}
~\\
where $g$ and $h$ are isomorphisms, and $0 \to (\Gm^n)_{K_0} \to G_0 \to A_0 \to 0$ is a semiabelian variety over $K_0$. Furthermore, if $A$ is of the form $(A_0)_{K_1}$ for some $A_0$ over $K_0$, we can choose
$h$ to be the identity.
\end{fact}

\begin{proposition} \vlabel{modulispace} Assume $char(K)=p$. Let $G$ be a semiabelian variety over $K$, such that $G$ descends to $K^{p^n}$ for all $n\ge 0$. Then $G$ descends to $K^{p^{\infty}}$.
\end{proposition}

\begin{proof}
Let $A$ be the abelian part of $G$, and $T$ its toric part. 
By Fact \ref{descsemiab}, $A$ descends to $K^{p^n}$ for all $n$. Using a suitable moduli space (namely the moduli space of abelian varieties equipped with a polarization of fixed degree and an $m$-level structure,
see \cite{MumfordFo}), it follows that $A$ descends to $K^{p^{\infty}}$.\\
Now fix $A_0$ over $K^{p^{\infty}}$ such that $A \cong (A_0)_K$. 
It is classical that  $\text{Ext}(A,T)\simeq
(\text{Ext}(A,\Gm))^t\simeq (\hat A)^t$, where $\hat A$ is the dual abelian variety of $A$, and is isomorphic to $(\hat A_0)_K$ (see 
for example \cite{Serre}). Using Fact \ref{descsemiab} again, and since $G$ descends to $K^{p^n}$ for each $n$, the isomorphism type of $G$
is parametrized by a point in 
$\hat A_0(\bigcap_n K^{p^n})=\hat A_0(K^{p^{\infty}})$, that is, $G$ descends to $K^{p^{\infty}}$.
\end{proof}


\begin{remark} Over a  separably closed field $K$  of  characteristic $p>0$, the
  semiabelian varieties over $K$ are exactly the commutative 
divisible algebraic groups over $K$. Indeed let $H$ be commutative
divisible,   consider the
biggest connected affine subgroup of $H$, $T$.  By divisibility it must be
 a torus and as $K$ is
separably closed, it is defined over $K$  (and split over $K$), and
$H/T$ is an abelian variety, by Chevalley's theorem (\cite{Rosenlicht1}).
\end{remark}
\subsubsection{Torsion}

The behaviour of the torsion elements of $G$ is particularly
important in characteristic $p$. The following classical facts will enable us to fix some notation
for the rest of the paper. 

\begin{fact} \vlabel{torsion} Let $G$ be a semiabelian variety over $K$, written additively, and $$0 \rightarrow T \rightarrow
  G\rightarrow  A\rightarrow 0,$$  with $dim(A) =a$ and $dim (T) =
  t$\\
{\em 1.} If $n$ is prime to $p=char(K)$ or arbitrary in characteristic $0$, then 
$[n] : G \mapsto G$, $x \mapsto nx$  is a separable isogeny of degree (= separable
  degree) $n^{2a + t}$. We denote by $G[n]$ the kernel of $[n]$, the
  points of $n$-torsion, then  $G[n](\overline K) \cong
  ({\mathbb Z}/ n {\mathbb Z})^{2a +t}$. By separability,  $G[n](\overline K) = G[n](K)$.\\
From now on, $char(K)=p>0$.\\
{\em 2. } $[p]:  G \mapsto G$ is an inseparable isogeny of degree $p^{2a +t}$, and of
  inseparable degree at least $p^{a+t}$. Hence there is some $r$, $0 \leq r
  \leq a$ such that, for every $n$,  
$$ G[p^n](\overline K)=Ker [p^n](\overline K) \cong  ({\mathbb Z}/ {p^n} {\mathbb Z})^{r}.$$
We say that $G$ is {\em ordinary} if $r = a$ (note that tori are ordinary semiabelian varieties). \\
As $G[p^n](\overline K)$ is finite, it is contained in $G(\overline K)$, but not
necessarily in $G(K)$. \\ 
{\em 3. } Let $G[p^\infty] $ or $G[p^\infty](\overline K)$  denote the elements of $G$ with order a power of $p$,
and $G[p']$ or $G[p'](\overline K)$ denote the elements of $G$ with
order prime to $p$.  
Then $G[p'] =  G[p'](K)  $ is Zariski dense in $G$.\\
Note that,  even for
 $G$ ordinary, we may well have that $G[p^\infty](K) = \{0\}$. 
\end{fact}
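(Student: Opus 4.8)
The plan is to reduce everything to the two building blocks --- a $K$-split torus $T\cong(\Gm)^{\times t}$ and an abelian variety $A$ of dimension $a$ --- and reassemble via $0\to T\to G\to A\to 0$ using the snake lemma applied to the multiplication maps, keeping the finite group scheme $G[m]$ carefully distinct from its set of $\overline K$-points $G[m](\overline K)$. For part 1, fix $n$ prime to $p$. On $\Gm$ the map $[n]\colon x\mapsto x^n$ is \'etale with kernel $\mu_n\cong\Z/n\Z$, so on $T$ one gets $T[n]\cong(\Z/n\Z)^t$ and $[n]$ separable of degree $n^t$; on $A$ it is classical (\cite{Mumford}) that $[n]$ is separable (since $d[n]=n\cdot\mathrm{id}$ is invertible on the tangent space) of degree $n^{2a}$ with $A[n]\cong(\Z/n\Z)^{2a}$. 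As $[n]$ is surjective on $T$, $G$ and $A$, the snake lemma gives $0\to T[n]\to G[n]\to A[n]\to 0$, which splits because $A[n]$ is a free $\Z/n\Z$-module; hence $G[n]\cong(\Z/n\Z)^{2a+t}$ and $[n]$ has degree $n^{2a+t}$. Since $G[n]$ is finite \'etale over the separably closed $K$, all its geometric points are $K$-rational, giving $G[n]=G[n](K)$, and surjectivity of $[n]$ on $G(K)$ is Proposition \ref{separablemorphisms}.

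For part 2 the same bookkeeping with $m=p$ applies. On $\Gm$ the kernel of $x\mapsto x^p$ is the infinitesimal $\mu_p$, so $[p]$ is purely inseparable on $T$, of degree $p^t$, with $T[p^n](\overline K)=0$. On $A$, $[p]$ has degree $p^{2a}$ and separable degree $p^r$, where $r=\dim_{\Fp}A[p](\overline K)$ is the $p$-rank, $0\le r\le a$ (ordinariness meaning $r=a$). Multiplicativity of degrees along the exact sequence then gives total degree $p^{2a+t}$ for $[p]$ on $G$ and separable degree $p^r$, hence inseparable degree $p^{2a+t-r}\ge p^{a+t}$. Passing to $\overline K$-points, since $T[p^n](\overline K)=0$ the snake-lemma sequence identifies $G[p^n](\overline K)$ with the \'etale part of $A[p^n]$, which (equivalently, via the Tate module $T_pA\cong\Z_p^{\,r}$) has constant rank $r$; this yields $G[p^n]=G[p^n](\overline K)\cong(\Z/p^n\Z)^r$ uniformly in $n$.

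For part 3, $G[p']=\bigcup_{(n,p)=1}G[n]$ is $K$-rational by part 1. For Zariski density let $H$ be the identity component of the closed subgroup $\overline{G[p']}$; $H$ is connected, hence again semiabelian, and we set $Q=G/H$. Applying the snake lemma to $0\to H\to G\to Q\to 0$ with $[n]$, $n$ prime to $p$, and using that $[n]$ is onto $H$ by divisibility, shows $G[n]\twoheadrightarrow Q[n]$, so $\pi(G[p'])=Q[p']$ for the projection $\pi\colon G\to Q$. But $\pi(G[p'])\subseteq\overline{G[p']}/H$ is finite, whereas $Q[p']$ is infinite as soon as $\dim Q>0$ (by part 1 applied to $Q$). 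Hence $\dim Q=0$, so $H=G$ and $G[p']$ is Zariski dense. The closing remark that $G[p^\infty](K)$ can be trivial even for ordinary $G$ is illustrated later and needs no argument here.

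The one genuinely substantive point, rather than formal bookkeeping, is in part 2: establishing, uniformly in $n$, that the \'etale part of $A[p^n]$ has rank exactly $r$ and that this is unaffected by the extension by the infinitesimal $T[p^n]$. The cleanest route is the Tate-module statement $T_pA\cong\Z_p^{\,r}$, after which the identification $G[p^n](\overline K)\cong(\Z/p^n\Z)^r$ and the lower bound on the inseparable degree are immediate.
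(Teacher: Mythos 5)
Your proof is correct. There is nothing to compare it against inside the paper: the statement is recorded as a classical Fact, with the reader pointed to \cite{Mumford} at the start of Section 2.2.2, and no proof is given. Your reconstruction is the standard and right one: reduce to the two building blocks via the snake lemma applied to $[m]$ on $0\to T\to G\to A\to 0$. In part 1 the key points (divisibility of $T(\overline K)$ killing the cokernel term, freeness of $A[n]$ over $\Z/n\Z$ giving the splitting, \'etaleness of the kernel giving both separability and $G[n]=G[n](K)$ over separably closed $K$) are all correctly in place. In part 2, using $T[p^n](\overline K)=0$ to identify $G[p^n](\overline K)$ with $A[p^n](\overline K)\cong(\Z/p^n\Z)^r$ is exactly how one gets the uniform rank $r$ and the bound $p^{2a+t-r}\geq p^{a+t}$ on the inseparable degree; you correctly isolate the one substantive input ($T_pA\cong\Z_p^{\,r}$, i.e.\ constancy of the $p$-rank across all $p^n$) and source it from the classical theory, which is the same level of citation the paper itself uses. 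Your part 3 density argument (pass to the identity component $H$ of $\overline{G[p']}$, observe that the image of $G[p']$ in $G/H$ is simultaneously all of $(G/H)[p']$, by divisibility of $H(\overline K)$, and finite, forcing $\dim G/H=0$) is a nice self-contained proof of a point the paper merely asserts. Two steps lean on standard facts you should at least name: that a closed connected subgroup of a semiabelian variety, and the quotient by it, are again semiabelian (needed to apply part 1 to $Q$ and to get divisibility of $H$); and that the total degree $p^{2a+t}$ of $[p]_G$ in part 2 requires multiplicativity of orders in the scheme-theoretic kernel sequence $0\to T[p]\to G[p]\to A[p]\to 0$ (exact as finite group schemes, not on $\overline K$-points, where it degenerates). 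Both are classical and covered by the paper's reference, so your write-up sits at the intended level of rigor.
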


We will also need the following easy observations:


\begin{fact}\vlabel{torsionandexactsequences} Let $ 0 \rightarrow G_1 \rightarrow
  G_2\stackrel{f}{\rightarrow} G_3\rightarrow 0$ be  an exact  sequence of
  semiabelian varieties over $K$. Then \\
for every $n$, the restriction of $f$ to $n$ torsion induces an exact sequence (in the category of groups),
i.e. $$ 0 \rightarrow G_1[n](\overline K) \rightarrow
  G_2[n](\overline K) \stackrel{f}{\rightarrow} G_3[n](\overline K)
  \rightarrow 0. $$ 
It follows in particular that, in all characteristics, 
$$ 0 \rightarrow Tor G_1  \rightarrow
  Tor G_2 \stackrel{f}{\rightarrow} Tor G_3\rightarrow 0 $$
is an exact sequence of groups, 
  where $Tor G$ denotes the group of all torsion elements of
  $G(\overline K)$.
\end{fact}


\medskip 

Divisibility  by $p$ also behaves quite differently in $G(\overline K)$ and in
$G(K)$ when $char(K)=p$. Let 
$$p^\infty G(K) := \bigcap_{n\geq 1} [p^n] G(K).$$

\begin{proposition} \vlabel{divisibility} 1. $G(K)$ is $n$-divisible for any $n$ prime to $p$. \\
2. For $n$ prime to $p$, for every $k$, $G[n](\overline K) =G[n](K)\subset [p^k] G(K)$.\\
3. $G[p'](\overline K)=G[p'](K)$ is a divisible subgroup of $G(K)$.\\
4. $p^\infty G(K)$ is $n$-divisible for any $n$ prime to $p$.\\
5. $p^\infty G(K)$ is infinite and Zariski dense in $G$. \\
6. $p^\infty G(K)$ is the biggest divisible subgroup of $G(K)$.
\end{proposition}
\begin{proof} 1 to 5 are clear from previous facts.\\
6 follows from K\"onig's Lemma and the finiteness of $G[p^n]$ for every $n$.
\end{proof}

\subsubsection{Isogenies and descent in char.p}\vlabel{isogenies}

We will not necessarily directly use all  the classical facts about isogenies
 recalled below, but they give a picture of the various
problems linked to descent questions in characteristic $p$. We will provide short elementary proofs
when they exist.

In this section, $K$ is any separably closed field of characteristic
$p>0$, $G$ and $H$ are semiabelian varieties over $K$.

Note first that if $G$ and $H$ are semiabelian varieties over $K$, and $f$ a morphism of algebraic groups
$G_L \to H_L$ for some extension $L\supset K$, then $f$ is actually defined over $K$, i.e. $f=g_L$ for some
$K$-morphism $g$ from $G$ to $H$:  
by \ref{subgroups},
the graph of $f$,  which is a closed connected subgroup of $(G \times H)_L$
is also defined over $K$.  

Recall   that an {\em isogeny} is a surjective morphism of
algebraic groups with finite kernel. 


Let  $G$ be  a semiabelian variety over $K$. It is classical that for 
every 
$n\geq 1$ the relative $n^{th}$-Frobenius   isogeny $F^n : G \longrightarrow G^{(p^n)}$
($G^{(p^n)}$ descends to $K^{p^n}$)  is purely inseparable of degree
$p^{n dim G}$, and 
admits a ``quasi-inverse'' isogeny, the $n^{th}$-Verschiebung, denoted $V_n : G^{(p^n)}
\longrightarrow G $, such that $V_n \circ F^n = [p^n]_G $ and $F^n
\circ V_n = [p^n]_{ G^{(p^n)}}$. It is easily seen, counting degrees, that:

\begin{fact}\vlabel{separableverschiebung} If $G$ is ordinary, then for
  every $n$, the Verschiebung $V_n$ is separable. \end{fact}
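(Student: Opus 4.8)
The plan is to prove separability purely by \emph{counting degrees}, exactly as the statement suggests, using that the separable and inseparable degrees are multiplicative under composition together with the defining relation $V_n\circ Fr^n=[p^n]_G$. Write $a=\dim A$, $t=\dim T$, so $\dim G=a+t$, and let $r$ be the integer with $G[p^n]\cong(\mathbb{Z}/p^n\mathbb{Z})^r$ from Fact \ref{torsion}; ordinariness means $r=a$. I will use the standard facts that for any isogeny $\phi$ one has $\deg\phi=\deg_s\phi\cdot\deg_i\phi$, that $\deg_s\phi$ equals the number of geometric points of $\ker\phi$, that $\phi$ is separable iff $\deg_i\phi=1$ (equivalently $\deg_s\phi=\deg\phi$), and that $\deg$, $\deg_s$, $\deg_i$ are all multiplicative under composition.

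First I would record the three relevant degrees. The $n$-th Frobenius $Fr^n$ is purely inseparable of degree $p^{n(a+t)}$ (one factor $p^{\dim G}$ per iterate), so $\deg_s Fr^n=1$ and $\deg_i Fr^n=p^{n(a+t)}$. Next, $[p^n]_G=[p]_G^{\,n}$, so by Fact \ref{torsion}(2) its total degree is $(p^{2a+t})^n=p^{n(2a+t)}$; and since its kernel is $G[p^n]\cong(\mathbb{Z}/p^n\mathbb{Z})^r$, its separable degree is exactly $|G[p^n]|=p^{nr}$.

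Now I apply multiplicativity to $V_n\circ Fr^n=[p^n]_G$. On total degrees this gives $\deg V_n=p^{n(2a+t)}/p^{n(a+t)}=p^{na}$. On separable degrees, since $\deg_s Fr^n=1$, it gives $\deg_s V_n=\deg_s[p^n]_G=p^{nr}$, whence in general $\deg_i V_n=p^{n(a-r)}$. When $G$ is ordinary we have $r=a$, so $\deg_s V_n=p^{na}=\deg V_n$, i.e. $\deg_i V_n=1$ and $V_n$ is separable, as required; this also incidentally shows $V_n$ has degree $p^{na}$.

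The only point needing care — the main (mild) obstacle — is the justification that $\deg_s[p^n]_G=|G[p^n]|$: one must identify the separable degree of the isogeny $[p^n]_G$ with the order of the \emph{étale} (equivalently, $\overline K$-rational) part of its kernel group scheme, which is precisely the $(\mathbb{Z}/p^n\mathbb{Z})^r$ supplied by Fact \ref{torsion}(2), and to check that the torus directions contribute only to the inseparable degree. Everything else is bookkeeping with the degree of Frobenius and with multiplicativity, so no further input is needed.
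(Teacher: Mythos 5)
Your proof is correct, and it is exactly the argument the paper has in mind: the paper offers no more than the phrase "it is easily seen, counting degrees," and your write-up is precisely that degree count (purely inseparable $Fr^n$ of degree $p^{n(a+t)}$, total degree $p^{n(2a+t)}$ and separable degree $|G[p^n]|=p^{nr}$ for $[p^n]$, then multiplicativity applied to $V_n\circ Fr^n=[p^n]_G$ and the ordinariness hypothesis $r=a$). The one point you flag — identifying $\deg_s[p^n]_G$ with the number of geometric points of the kernel, the torus directions being purely inseparable — is indeed the only nontrivial input, and you handle it correctly.
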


\begin{lemma} \vlabel{divweil2} Let $G$ be a semiabelian variety over
 $K$ and $L$ an extension of $K$. Then if $a \in p^n G(L)$, there exists $b \in G(L)$ such that $a\in
 K(F^n(b))$. So if $G$ is over $K^{p^n}$, then $[p^n] G(K)
 \subset  G(K^{p^n})$ and in particular  $\Gpinf
 = p^{\infty} G(K^{p^n})$.\end{lemma}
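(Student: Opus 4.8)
The plan is to exploit the factorization $[p^n] = V_n \circ Fr^n$ of multiplication by $p^n$ through the $n$-th Frobenius and Verschiebung, which reduces everything to a rationality analysis of $V_n$. For the first assertion, given $a \in p^n G(K)$ I choose $b \in G(K)$ with $a = [p^n]b$, so that $a = V_n(Fr^n(b))$. In affine coordinates the point $Fr^n(b)$ has as coordinates the $p^n$-th powers of those of $b$, which I abbreviate $b^{p^n}$, and in particular $Fr^n(b) \in (Fr^n G)(K^{p^n})$. The Verschiebung $V_n \colon Fr^n G \to G$ is a morphism of algebraic groups between two semiabelian varieties both defined over $K$ (recall that $Fr^n G$ is defined over $K^{p^n} \subseteq K$); since its graph is a closed connected subgroup of the semiabelian variety $Fr^n G \times G$, Fact \ref{subgroups}(iii) shows $V_n$ is itself defined over $K$. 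Hence $a = V_n(b^{p^n})$ is a $K$-rational function of the coordinates of $b^{p^n}$, that is, $a \in K(b^{p^n})$.

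For the second assertion I would run the identical computation one field lower. Assume $G$ is defined over $K^{p^n}$. Since $K$ is separably closed, so is $K^{p^n}$, and the general fact quoted before Fact \ref{separableverschiebung} then gives that $Fr^n G$ is defined over $(K^{p^n})^{p^n} = K^{p^{2n}} \subseteq K^{p^n}$. Thus $G$ and $Fr^n G$ are both semiabelian varieties over the \emph{separably closed} field $K^{p^n}$, so the same graph argument, now applied over $K^{p^n}$, shows that $V_n$ is defined over $K^{p^n}$. Since $Fr^n(b) \in (Fr^n G)(K^{p^n})$, I conclude $a = V_n(Fr^n(b)) \in G(K^{p^n})$, which is exactly $[p^n]G(K) \subseteq G(K^{p^n})$.

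For the final identity $\Gpinf = p^\infty G(K^{p^n})$ I would argue by double inclusion. The inclusion $p^\infty G(K^{p^n}) \subseteq p^\infty G(K) = \Gpinf$ is immediate from $K^{p^n} \subseteq K$. For the reverse, I first observe that $\Gpinf \subseteq G(K^{p^n})$: every $x \in \Gpinf$ is $p^n$-divisible inside $\Gpinf$ (Lemma \ref{divisibility}, item 6), so $x = [p^n]x'$ with $x' \in \Gpinf \subseteq G(K)$, whence $x \in [p^n]G(K) \subseteq G(K^{p^n})$ by the second assertion. Now fix $a \in \Gpinf$ and $m \geq 1$; by divisibility of $\Gpinf$ choose $a_m \in \Gpinf$ with $[p^m]a_m = a$. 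Then $a_m \in \Gpinf \subseteq G(K^{p^n})$, so $a \in [p^m]G(K^{p^n})$, and letting $m$ range gives $a \in p^\infty G(K^{p^n})$.

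The main obstacle is the bookkeeping of the rationality fields. The whole argument hinges on the Verschiebung being defined not merely over $K$ but over $K^{p^n}$, and this depends on two points that must be handled carefully: that $K^{p^n}$ is again separably closed (so that Fact \ref{subgroups}(iii) applies over it), and that $Fr^n G$ descends all the way to $K^{p^{2n}}$ when $G$ is over $K^{p^n}$. Once the rationality of $V_n$ over the correct field is secured, the remaining steps are formal manipulations with the divisibility properties of $\Gpinf$.
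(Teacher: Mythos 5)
Your proof is correct and takes essentially the same route as the paper's: both rest on the factorization $[p^n] = V_n \circ Fr^n$ and the observation that the Verschiebung is defined over $K^{p^n}$ as soon as $G$ is, so that $a = V_n(b^{p^n})$ lands in $K(b^{p^n})$ (resp.\ in $G(K^{p^n})$). You merely make explicit two points the paper leaves implicit, namely the graph argument via Fact \ref{subgroups}(iii) for the rationality of $V_n$, and the double-inclusion deduction of $\Gpinf = p^{\infty}G(K^{p^n})$ from the divisibility of $\Gpinf$ (Lemma \ref{divisibility}).
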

\begin{proof} Consider the $n^{th}$-Verschiebung $ V_n$, described
 above. 
If $a \in p^n G(L)$, then $a = p^n b $ for some $b \in
 G(L)$, and $a = V_n (F^n (b))$. If $G$ is over $K^{p^n}$, then
 the Verschiebung is also over $K^{p^n}$ and  $a
 \in K^{p^n}(F^n (b)) = K^{p^n}$. \end{proof}




Abelian varieties have one specific very important property: 

\begin{fact}\vlabel{abeliansimple} Let $A$ be an abelian variety over $K$. Then $A$
  is isogenous over $K$ to a finite product of simple (i.e. which have no
  proper nontrivial closed connected subgroup) abelian varieties.\end{fact}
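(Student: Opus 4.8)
The assertion is the classical Poincar\'e complete reducibility theorem, so the plan is to reconstruct its standard proof while paying attention to rationality over the separably closed field $K$. I would argue by induction on $\dim A$. The inductive step rests entirely on one statement, \emph{splitting off a subvariety up to isogeny}: if $B \subseteq A$ is a nontrivial proper abelian subvariety defined over $K$, then there is a complementary abelian subvariety $C \subseteq A$, also defined over $K$, with $B \cap C$ finite and $B + C = A$, so that the sum map is a $K$-isogeny $B \times C \to A$. Granting this, if $A$ is simple we are done; otherwise $A$ has a proper nontrivial closed connected subgroup, which by Fact~\ref{subgroups}(iii) is defined over $K$ and, being complete and connected, is an abelian subvariety $B$ with $0 < \dim B < \dim A$. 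Splitting gives $A \sim_K B \times C$ with both factors of smaller dimension, and induction finishes. Note also that by Fact~\ref{subgroups}(iii) every closed connected subgroup of $A$ is already defined over $K$, so ``simple over $K$'' and ``simple over $\overline{K}$'' coincide and there is no ambiguity in the statement.

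To produce the complement I would use duality. Let $A^{\vee}$ be the dual abelian variety and fix a polarization $\phi : A \to A^{\vee}$ (an isogeny; one exists already over $\overline{K}$, which is all I need). Writing $i : B \hookrightarrow A$ for the inclusion and $i^{\vee} : A^{\vee} \to B^{\vee}$ for its dual, set $\psi = i^{\vee} \circ \phi : A \to B^{\vee}$. The composite $\psi \circ i = i^{\vee}\circ\phi\circ i$ is the restricted polarization of $B$, hence an isogeny $B \to B^{\vee}$; in particular $\psi$ is surjective and $\ker(\psi \circ i)$ is finite. I then take $C$ to be the reduced identity component of $\ker\psi$. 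A dimension count gives $\dim C = \dim A - \dim B^{\vee} = \dim A - \dim B$, while any point of $B \cap C$ lies in $\ker(\psi\circ i)$; thus $B \cap C$ is finite and $B + C$ is a connected subgroup of full dimension, so it equals $A$.

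The complement $C$ is a priori defined only over $\overline{K}$, but here the characteristic-$p$ rationality issue dissolves: $C$ is a closed connected subgroup of the abelian (hence semiabelian) variety $A$ over the separably closed field $K$, so Fact~\ref{subgroups}(iii) gives that $C$ is defined over $K$. The sum map $\sigma : B \times C \to A$, $(b,c)\mapsto b+c$, is then a morphism of algebraic groups defined over $K$; it is surjective because $B + C = A$, and its kernel $\{(b,-b) : b \in B\cap C\}$ is finite, so $\sigma$ is a $K$-isogeny. This completes the inductive step, and iterating decomposes $A$, up to $K$-isogeny, into a finite product of simple abelian varieties (composites of isogenies being isogenies).

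The main substantive input, and the only real obstacle, is the complement construction over $\overline{K}$, which draws on the standard theory of abelian varieties (existence of the dual $A^{\vee}$ and of a polarization). Once that is in hand, the delicate point one would normally worry about over a non-perfect field, namely that the reduced identity component $C$ descends to the ground field, is handed to us for free by Fact~\ref{subgroups}(iii); indeed this is precisely why working over a \emph{separably} closed field keeps the argument clean. One could even bypass the polarization over $K$ altogether, constructing $C$ over $\overline{K}$ and descending it via Fact~\ref{subgroups}(iii).
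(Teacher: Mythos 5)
Your proof is correct, but there is nothing in the paper to compare it against: the statement is labelled a \emph{Fact} and is quoted without proof, as the classical Poincar\'e complete reducibility theorem. Your reconstruction is the standard argument --- induction on dimension, a complement $C$ built from a polarization $\phi$ and the dual abelian variety via $\psi = i^{\vee}\circ\phi$, and the dimension/finiteness count showing the sum map $B\times C\to A$ is an isogeny --- and your handling of the one point that is genuinely specific to this paper's setting is exactly right: the complement is constructed over $\overline{K}$ (where taking the reduced identity component of $\ker\psi$ is unproblematic, $\overline{K}$ being perfect) and then descends to $K$ for free by Fact~\ref{subgroups}(iii), which is also what makes ``simple over $K$'' and ``simple over $\overline{K}$'' agree. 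Two small steps you leave implicit are standard and harmless: that $i^{\vee}\circ\phi\circ i$ is again a polarization of $B$ (pull the ample line bundle back along $i$), and that $K$-isogeny is a symmetric relation, so the inductive decompositions of $B$ and $C$ assemble into one for $A$; for the latter, the paper's own remark at the start of Section~\ref{isogenies} --- that any morphism between semiabelian varieties defined over $K$ is automatically defined over $K$ --- makes the dual isogeny cost nothing.
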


We will now recall some very classical facts about descent. We will try
to point out where the difficulties arise, for our readers not
completely familiar with the theory of abelian varieties in
characteristic $p$.  

In characteristic $0$, any semiabelian variety which is isogenous to one defined
over some algebraically closed $K_0$ descends, in the sense above, to
$K_0$ (i.e. Fact \ref{easyseparableisogeny} applies). The
situation is more complicated in characteristic $p$.

 \begin{fact} \vlabel{easyseparableisogeny} Let $K_0\subset K_1$, with $K_0$ algebraically closed. Let $G$
  be a semiabelian  variety over $K_0$, $H$ a semiabelian variety 
  over $K_1$ and $f$ a separable isogeny from $G_{K_1}$ onto
  $H$.  Then $H$ descends to $K_0$. \end{fact} 
\begin{proof} As $f$ is a separable isogeny, the kernel of $f$ is a
  finite closed subgroup of $G(K_0)$, $N$, of cardinality the degree (=
  separable degree)  of
  $f$. Then $G':= G/N$ is a semiabelian variety over $K_0$, and
  $f$ induces an isomorphism from $H$ onto $G'_{K_1}$.\end{proof}

\smallskip 

The following is  also classical, but more complicated and {\em is only true
for abelian varieties}. 

\begin{fact} \vlabel{hardseparableisogeny}  Let $K_0\subset K_1$, with $K_0$ algebraically closed. Let $A$
  be an abelian variety over $K_1$, $B$ an abelian variety 
  over $K_0$ and $f$ a separable isogeny from  $A$ onto
  $B_{K_1}$. Then $A$ descends to $K_0$.\end{fact}
\begin{proof}
This is a particularly simple case of the ``Proper base change theorem''
(see for example in \cite{SGA1} or \cite{Milne}). \end{proof}

\smallskip

\begin{remark} \vlabel{ellipticdescent} 
Note that in the case of dimension one, Fact \ref{easyseparableisogeny} holds without the assumption that $f$
is separable. That follows easily from the fact that in dimension one, an isogeny factors through some power of the Frobenius 
 (see for example \cite{silverman}).\\
We will give later (Remark \ref{supersingular}) an example showing that Facts \ref{easyseparableisogeny} and \ref{hardseparableisogeny} do not hold without the 
separability assumption in dimension $>1$.
\end{remark}


\bigskip

\subsection{Relative Morley Rank}\vlabel{Morleyrank}
In this section $T$ will be a complete theory, and we work in a given 
$\kappa$-saturated model $M$, for $\kappa$ sufficiently big.  
We will here define {\em relative} Morley rank, namely Morley rank inside a given {\infdef} set. This was called 
{\em internal Morley dimension} in \cite{Hrushovski}. By an {\infdef}
set (infinitely definable set) we mean a subset of some $M^{n}$ which is the intersection of a small (size $< \kappa$) collection of definable subsets of $M^{n}$ (that is the set of realizations of a partial type over a small set of parameters). We will fix an {\infdef} set $X\subseteq M^{n}$.

If $X$ is an infinitely definable subset of $M^n$, by a {\em relatively definable} subset of $X$ we mean a subset of the
form $Z = X\cap Y$ 
for $Y\subseteq M^{n}$ definable with parameters.  Then we  define Morley rank for relatively definable subsets $Z$ of $X$, as follows: 
\newline
(i) $RM_{X}(Z) \geq 0$ if $Z$ is nonempty.
\newline
(ii) $RM_{X}(Z) \geq \alpha + 1$ if there are $Z_{i}\subseteq Z$ for $i < \omega$ which are relatively definable subsets of $X$, such that $Z_{i}\cap Z_{j} = \emptyset$ for $i \neq j$ and $RM_{X}(Z_{i}) \geq \alpha$ for all $i$.
\newline
(iii) for limit ordinal $\alpha$, $RM_{X}(Z) \geq \alpha$ if  
$RM_{X}(Z) \geq \delta $ for all $\delta < \alpha$. 

\vspace{2mm}
\noindent
As in the absolute case we obtain (relative) Morley degree. Namely suppose that $RM_{X}(Z) = \alpha < \infty$. Then there is a greatest positive natural number $d$ such that $Z$ can be partitioned into $d$ (relatively in X) definable sets $Z_{i}$ such that $RM_{X}(Z_{i}) = \alpha$ for all $i$.

\vspace{2mm}
\noindent
We will say that $X$ has relative Morley rank if $RM_{X}(X) < \infty$.

\vspace{2mm}
\noindent 

\begin{remark} (i) Suppose that $Y$ is a relatively definable subset of $X$. Then $RM_{X}(Y) = RM_{Y}(Y)$.
\newline
(ii) We can also talk about the relative Morley rank $RM_{X}(p)$ of a complete type $p$ of an element of $X$ over a set of parameters. It will just be the infimum of the relative Morley ranks of the (relatively) definable subsets of $X$ which are in p.
\newline
(iii) Suppose that $T$ is countable and $X$ is {\infdef} over a countable set of parameters $A_{0}$. Then $X$  has relative Morley rank if and only if for any countable set of parameters $A\supseteq A_{0}$ there are only countably many complete types over $A$ extending $X$. 
\end{remark}

Now suppose that $X,Y$ are {\infdef} sets and $f: X \to Y$ is a surjective definable function. By definability of $f$ we mean that
$f$ is the restriction to $X$ of some definable function on a definable superset of $X$. Note that then each fibre $f^{-1}(c)$ of $f$ is a relatively definable subset of $X$, so we can talk about its relative Morley rank (with respect to $X$ or to itself, which will be the same by Remark 2.18 (i)).


\begin{lemma} \vlabel{mapRMR}
Suppose $X,Y$ are {\infdef} sets and $f:X\to Y$ is surjective and 
definable. 
\newline 
(i) Suppose that 
$RM_{Y}(Y) = \beta$ and for each $c\in Y$, $RM_{X}(f^{-1}(c)) \leq \alpha$.  Then $RM_{X}(X)\leq \alpha(\beta + 1)$
if $\alpha > 0$, and $\leq \beta$ if $\alpha = 0$.
\newline
(ii) $RM_{Y}(Y) \leq RM_{X}(X)$. 
\end{lemma}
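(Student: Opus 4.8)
The plan is to prove the two inequalities of Lemma \ref{mapRMR} by transfinite induction on the relevant ordinals, mimicking the classical proofs of the analogous statements for ordinary Morley rank, but being careful that all the sets we produce are \emph{relatively} definable (i.e.\ of the form $X\cap Y$ with $Y$ definable) so that we stay inside the world of the infinitely definable sets $X$ and $Y$. Throughout I would use the key structural facts: since $f$ is the restriction to $X$ of a genuine definable function $\hat f$ on a definable superset, the preimage under $f$ of any relatively definable subset $Z=Y\cap W$ of $Y$ is $X\cap \hat f^{-1}(W)$, hence relatively definable in $X$; and Remark 2.2(i), which lets me compute relative Morley rank of a relatively definable set either inside $X$ or inside itself.

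For part (ii), the plan is to show by induction on $\beta$ that $RM_Y(Y)\geq\beta$ implies $RM_X(X)\geq\beta$. Given a witnessing family of pairwise disjoint relatively definable $Z_i\subseteq Y$ with $RM_Y(Z_i)\geq\beta_i$ (for the successor or limit step), I would pull them back to $f^{-1}(Z_i)\subseteq X$, which are pairwise disjoint (as $f$ is a function) and relatively definable in $X$; surjectivity of $f$ guarantees each $f^{-1}(Z_i)$ is nonempty, and applying the inductive hypothesis to the surjection $f\!\restriction\! f^{-1}(Z_i)\to Z_i$ gives $RM_X(f^{-1}(Z_i))\geq RM_{Z_i}(Z_i)\geq\beta_i$. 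Feeding these back into the definition of relative Morley rank bumps $RM_X(X)$ up appropriately.

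For part (i), the plan is the more delicate fibration inequality. I would prove by induction on $\beta=RM_Y(Y)$ that if every fibre has relative rank $\leq\alpha$ then $RM_X(X)\leq\alpha(\beta+1)$ (and $\leq\beta$ when $\alpha=0$). The idea is that a relatively definable subset $Z\subseteq X$ of large rank must either project to a subset of $Y$ of rank $\geq\beta$ while covering ``most'' fibres, or concentrate over a subset of $Y$ of strictly smaller rank; one compares $RM_X(Z)$ against the rank of its image $f(Z)$ (using that the image of a relatively definable set is, up to the definable function $\hat f$, relatively definable, invoking stability/the fact that $\hat f(W)$ is definable) together with the uniform fibre bound $\alpha$. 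The bookkeeping constant $\alpha(\beta+1)$ arises exactly as in the absolute case: rank $\alpha$ in each fibre, multiplied over a base of rank $\beta$, with the $+1$ absorbing the boundary behaviour.

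The main obstacle I expect is the image step in part (i): unlike preimages, the image $f(Z)$ of a relatively definable $Z\subseteq X$ need not be obviously relatively definable in $Y$, since $X$ is only infinitely definable and $f$ only its restriction. I would handle this by passing to the definable extension $\hat f$ and using that $\hat f(\hat Z)$ is definable for a definable $\hat Z\supseteq Z$, then intersecting with $Y$; controlling the discrepancy between $f(Z)$ and $Y\cap\hat f(\hat Z)$ is where stability and the small size of the defining type families must be used. Once the image is safely relatively definable in $Y$, the induction on $\beta$ proceeds by the standard rank-additivity argument, splitting $X$ according to whether the image of a given rank-$\alpha$ piece has full rank $\beta$ in $Y$ or drops to rank $<\beta$, and invoking the inductive hypothesis on the latter part.
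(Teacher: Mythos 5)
Your part (ii) is correct and coincides with the paper's own treatment: the paper proves (ii) by observing that, since $f$ extends to a definable function $\hat f$ on a definable superset of $X$, the preimage of any relatively definable subset of $Y$ is relatively definable in $X$, and then running the usual induction; your pullback-of-disjoint-families argument is exactly this.

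Part (i) is where the proposal has a genuine gap, in fact two. First, what you call ``the standard rank-additivity argument'' is the Erimbetov--Shelah inequality (Shelah \cite{Shelah}, Chapter V, Theorem 7.8; Erimbetov \cite{Erimbetov}), whose proof is a nontrivial combinatorial induction even in the absolute (definable) case; your sketch never carries it out, and the sentence ``the bookkeeping constant $\alpha(\beta+1)$ arises exactly as in the absolute case'' is doing all of the work. The paper does not reprove this theorem either --- it cites Shelah and Erimbetov, and points to Ziegler's self-contained proof \cite{Ziegler} as the one that ``adapts immediately'' to the relative setting --- but a proof attempt, as opposed to a citation, has to actually supply that argument. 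Second, and more seriously, the one place where the relative setting genuinely differs from the absolute one --- the step you yourself flag as ``the main obstacle'' --- is left unresolved. If $Z = X\cap W$ is relatively definable in $X$, its image $f(Z)$ need \emph{not} be relatively definable in $Y$: the set $Y\cap \hat f(W)$ can strictly contain $f(Z)$, since a point of $Y$ may have all of its $\hat f$-preimages in $W$ lying outside $X$, and by compactness $f(Z)$ is only the small directed intersection $\bigcap_{W'\supseteq Z}\bigl(Y\cap \hat f(W')\bigr)$ over definable $W'$, hence infinitely definable but in general not relatively definable --- so the quantity $RM_Y(f(Z))$ that your induction manipulates is not even defined. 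Your proposed repair (``controlling the discrepancy \ldots is where stability and the small size of the defining type families must be used'') names the problem rather than solving it; note moreover that the lemma is stated for an arbitrary complete theory in a saturated model, with no stability hypothesis, and stability would not make images relatively definable in any case. A correct adaptation must either avoid images entirely or systematically replace ``$RM_Y(f(Z))\ge\beta$'' by a statement about all relatively definable $V\subseteq Y$ with $Z\subseteq f^{-1}(V)$, and redo the compactness and degree arguments with those oversets; as written, your induction for (i) cannot get started.
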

\begin{proof} (i) This is proved in the definable (absolute) case by Shelah \cite{Shelah} (Chapter V, Theorem 7.8) and Erimbetov \cite{Erimbetov}.  Martin Ziegler \cite{Ziegler} also gives a self-contained proof. We point out briefly how Ziegler's proof (see section 2 of \cite{Ziegler}) adapts to the more general context.  
\newline
Case 1, when $\alpha = 0$, \cite{Ziegler} works word-for-word.
\newline
Case 2, when $\alpha > 0$. Work by induction on $\beta$. We may assume that  $Y$ has ``relative Morley degree" $1$ (with respect to itself). 
Suppose for a contradiction that $\alpha\beta + \alpha < RM_{X}(X)$. Lemma 3 of \cite{Ziegler} applies, yielding a relatively definable subset $X'$ of $X$, such that $\alpha\beta < RM_{X}(X')$ and such that the ``generic fibre" of $f|X'$ has finitely many, say $k$, elements (where maybe $k=0$). 
We now apply compactness to find a relatively definable subset $Y^{*}$ of $Y$ such that for all $b\in Y^{*}$, $f^{-1}(b)\cap X'$ has at most $k$ elements. Let $Y' = Y\setminus Y^{*}$ a relatively definable subset of $Y$ such that $RM_{Y}(Y') = \beta' < \beta$.  By Case 1, $X'\cap f^{-1}(Y^{*})$ has relative Morley rank $\leq \beta$, whereby the relative Morley rank of $X'' = X'\cap f^{-1}(Y')$ is $> \alpha\beta \geq \alpha(\beta' + 1)$. This contradicts the induction hypothesis applied to $f|X'': X'' \to Y'$.
\newline 
(ii) is easier, and has the same inductive proof as in the definable (absolute) case, bearing in mind that because $f$ is the restriction to $X$ of a definable function on a definable superset of $X$, the preimage under $f$ of any relatively definable subset of $Y$ is a relatively definable subset of $X$.
\end{proof}

\vspace{5mm}
\noindent

  
If $X = G$ is an {\infdef} group with relative Morley rank then some of the  general theory of totally transcendental groups applies (as already mentioned inside Definition 4.0 of \cite{Hrushovski}). For example we have the $DCC$ on relatively definable subgroups, yielding that $G$ is connected-by-finite among other things. And this is really all we will be using about groups of finite relative Morley rank. 


\vspace{2mm}
\noindent

We now consider an exact sequence of {\infdef} groups 
$1 \rightarrow G_{1}  \stackrel{h}\rightarrow G_{2} \rightarrow G_{3}
\rightarrow 1$. 
We can assume that $G_{1} = Ker(h) \subseteq G_{2}$, as relative Morley rank is preserved by definable bijection, and note again that $G_{1}$ is then a 
relatively definable (normal) subgroup of $G_{2}$. With this notation we
have the following Corollary, which follows immediately from Lemma \ref{mapRMR}:

\begin{corollary} \vlabel{RMRexact} Suppose $G_{1}$ and $G_{3}$ have (finite) relative Morley rank. Then so does $G_{2}$.
\end{corollary}

We complete this section with some additional comments and examples.
\newline
Firstly we obtain the usual (absolute) Morley rank of a definable set $Z\subseteq M^{n}$ by taking $X$ to be  $M^{n}$ in the definition at the beginning of this subsection.
Of course Morley rank can be defined directly for complete types  (over a saturated enough model M), by: $RM(p(x)) = \alpha$ if $p(x)$ is isolated in the subspace of $S_{x}(M)$ obtained by removing the set of types of Morley rank $<\alpha$. Here the ambient space of types is $S_{x}(M)$.  We can make the analogous definition for relative Morley rank $RM_{X}(p)$, by working in the space $S_{X}(M)$ of complete types over $M$ extending the type-definable set $X$.  In any case it should be clear to the reader that $RM_{X}(p)$ need not coincide with $RM(p)$. For example, suppose $X$ is a so-called minimal type-definable set: namely $X$ is infinite and every relatively definable subset of $X$ is finite or cofinite (in $X$). Then there is a unique nonalgebraic complete type over $M$ extending $X$, say $p(x)$. Moreover $RM_{X}(p) = 1$. But $RM(p)$ may be undefined (i.e. $\infty$). This is precisely the case 
when $M = K$ is a separably closed, non algebraically closed field, and $X = k = \cap_{n} K^{p^{n}}$. $X$ is type-definable and minimal. For $p(x)\in S_{x}(M)$ the ``generic" type of $X$ as above, $RM_{X}(p) = 1$, but $RM(p) = \infty$, because otherwise there would be a {\em formula} in $p$ of ordinal valued Morley rank, and there are no such (nonalgebraic) formulas in the theory of separably closed fields.

\section{The $\sharp$ functor} 

\vlabel{sharp}
Here $K$ will be either a separably closed field of characteristic $p>0$
and finite degree of imperfection, or  a differentially closed field of characteristic $0$ (so with distinguished derivation $\partial$).  We distinguish the cases by ``characteristic $p$", ``characteristic $0$". In the characteristic $p$ case we will take $K$ to be say $\omega_{1}$-saturated, so as to be able to do model theory, although this will not always be necessary. Definability will mean in the sense of the structure $K$.
In the
characteristic $0$ case, as $DCF_{0}$ is $\omega$-stable we have $DCC$
on definable subgroups of a definable group, so any {\infdef} group is
definable. In the characteristic $p$ case, by stability,  any {\infdef} subgroup is an intersection of at most countably many definable groups.

\begin{definition} Let $G$ be a semiabelian variety over $K$. Then $G^{\sharp}$ is the smallest {\infdef} subgroup of $G(K)$ which is Zariski-dense in $G$.
\end{definition}

\medskip 
Various equivalent
characterizations of $\Gpinf$ were given in \cite{BD2}. 
In particular it was shown (\cite{BD2}, Proposition 3.6) that $\Gpinf$ is the unique divisible subgroup
of $G(K)$ which is Zariski-dense in $G$.
But the following
one was omitted at the time. 


\begin{proposition}\vlabel{Zardense} Suppose that $char(K)=p$ and let $G$  be a semiabelian variety over
  $K$. Then $\Gpinf$ is the smallest \infdef group of $G(K)$ which is
  Zariski dense in $G$, hence $\Gpinf=G^{\sharp}$. 
\end{proposition}
\begin{proof} Let $H$ be any \infdef subgroup of $G(K)$,  also Zariski
  dense in $G$. By stability, $H$ is a decreasing intersection of
  definable subgroups of $G(K)$,  $(H_i)_{i\in I}$. Certainly each
  $H_i$ is itself Zariski dense in $G$. By \cite{BD1} Corollary 4.16,
the connected component of $H_i$, $C_i$
  is also definable in $G(K)$ and has finite index in
  ${H_i}$. It follows that it is also Zariski dense in $G$. 

 Now,  
for every $r\geq 1$ the (definable) subgroup  $[p^r] C_i$ is also
Zariski dense in $G$. 
It follows by compactness and saturation,  that $\cap_{n\geq 1} [p^n] C_i$  is also 
Zariski dense in
$G$. But $\cap_{n\geq 1} [p^n] C_i$ is a divisible
group, and by the remark above,
$\Gpinf = \cap_{n\geq 1} [p^n] C_i$ for every $i$ and is hence
contained in  $H$. \end{proof}

In characteristic $0$, $G^{\sharp}$ is 
sometimes called the ``Manin
kernel" (see \cite{Marker-quaderni}). Alternative characterizations and key properties in arbitrary characteristic are given in the following Lemma.

\begin{lemma}\vlabel{othercharacterizations} (i) $G^{\sharp}$ can also be characterized as the smallest {\infdef} subgroup of $G(K)$ which contains the (prime-to-$p$, in char. $p$ case) torsion of $G$.
\newline
(ii) $G^{\sharp}$ is connected (no relatively definable subgroup of finite index), and of finite $U$-rank in char. $p$, and finite Morley rank in char. $0$.
\newline
(iii) If $G=(G_0)_K$ for some $G_0$ over the constants $\C$ of $K$, then $G^\sharp =
G(\cal C )$. 
\end{lemma}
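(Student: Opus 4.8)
The plan is to prove the three characterizations of $G^\sharp$ by combining the Zariski-density definition with the torsion and divisibility facts already established, treating the characteristic $p$ and characteristic $0$ cases in parallel where possible.

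For part (i), I would show the two characterizations coincide by a double containment argument. Any \infdef subgroup $H$ that is Zariski-dense in $G$ must contain the relevant torsion: in characteristic $p$, by Fact \ref{torsion}(3) the prime-to-$p$ torsion $G[p']$ is Zariski-dense in $G$, and since $H$ is Zariski-dense and of finite $U$-rank (or by the argument in Proposition \ref{Zardense}) one sees $G[p'] \subseteq H$; indeed by Lemma \ref{divisibility}(2), $G[p'] \subseteq p^\infty G(K) = G^\sharp$. Conversely, any \infdef subgroup containing this torsion is Zariski-dense, since the torsion itself is dense. Thus the smallest \infdef group containing the torsion equals the smallest Zariski-dense one, which is $G^\sharp$ by definition. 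In characteristic $0$ the analogous statement uses that the full torsion $\mathrm{Tor}\,G$ is Zariski-dense in $G$ and contained in $G^\sharp$.

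For part (ii), connectedness should follow from minimality: if $G^\sharp$ had a relatively definable subgroup $H_0$ of finite index, then $H_0$ would still be Zariski-dense in $G$ (a finite-index subgroup of a Zariski-dense group is Zariski-dense, since the connected component of the Zariski closure has finite index), and $H_0$ is itself \infdef, contradicting minimality of $G^\sharp$. The finite $U$-rank in characteristic $p$ and finite Morley rank in characteristic $0$ I would cite from \cite{BD2} (and the $\omega$-stability of $DCF_0$ in the characteristic $0$ case, where any \infdef group is in fact definable).

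For part (iii), suppose $G$ is defined over the constants $\C$. I would first argue $G(\C)$ is an \infdef subgroup of $G(K)$: in characteristic $p$, $\C = K^{p^\infty} = \cap_n K^{p^n}$, and $G(K^{p^\infty}) = \cap_n G(K^{p^n})$ is an intersection of \infdef (indeed definable) conditions, hence \infdef. It is Zariski-dense in $G$ because $\C$ is separably closed (or algebraically closed in the char.\ $0$ case) so $G(\C)$ is dense in $G$. This gives $G^\sharp \subseteq G(\C)$ by minimality. For the reverse inclusion, I would show $G(\C)$ is divisible: in characteristic $p$, if $G$ is defined over $K^{p^n}$ for all $n$ then by Lemma \ref{divweil2} we have $[p^n]G(K) \subseteq G(K^{p^n})$, and one checks $G(\C)$ is $p$-divisible within itself; since $G(\C)$ is a divisible Zariski-dense subgroup and $G^\sharp = p^\infty G(K)$ is the \emph{unique} such (Lemma \ref{divisibility}(6) together with uniqueness from \cite{BD2}, Prop.\ 3.6), we get $G(\C) \subseteq p^\infty G(K) = G^\sharp$. \textbf{The main obstacle} I expect is precisely this reverse inclusion in part (iii): one must verify that $G(\C)$ is genuinely divisible as an abstract group, which requires that every element of $G(\C)$ has a $p$-power-divisor already lying in $\C$ rather than merely in $K$ --- this is where the descent hypothesis and the structure of the constants (via Verschiebung in the char.\ $p$ case, or the constants being the fixed field of the derivation in char.\ $0$) do the real work.
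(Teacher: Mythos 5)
Your handling of parts (i) and (ii), and of part (iii) in characteristic $p$, essentially reproduces the paper's own arguments: the double containment for (i) via Lemma \ref{divisibility}(2) and Zariski-density of the prime-to-$p$ torsion, the finite-index observation for connectedness in (ii), and for (iii) in characteristic $p$ the two inclusions you give (minimality of $G^{\sharp}$ for $G^{\sharp}\subseteq G(\C)$, divisibility plus Lemma \ref{divisibility}(6) for the reverse) are correct and amount to a mild repackaging of the paper's appeal to Lemma \ref{divweil2}. Two corrections, however. First, the ``main obstacle'' you single out is not an obstacle: $\C=K^{p^{\infty}}$ is perfect and separably closed, hence \emph{algebraically closed} (the paper uses this constantly, e.g.\ Fact \ref{invariant1} and the proof of Fact \ref{caseswithRMR}(i)), so $G(\C)$ is divisible simply because a semiabelian variety over an algebraically closed field has divisible group of points (Fact \ref{subgroups}(ii)); no Verschiebung and no descent analysis are needed. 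Second, in the characteristic $0$ half of (i) you assert $\mathrm{Tor}(G)\subseteq G^{\sharp}$ with no justification; this is precisely the nontrivial input (due to Buium), which the paper covers by citing Lemma 4.2 of \cite{Pillay-countable}, and likewise the rank statements in (ii) are theorems of Buium and Hrushovski (\cite{Buium1}, \cite{Hrushovski}), not of \cite{BD2}.

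The genuine gap is part (iii) in characteristic $0$, where your strategy for the reverse inclusion cannot work even in principle. In characteristic $0$ the field $K$ is algebraically closed (being differentially closed), so $G(K)$ itself is already a divisible, Zariski-dense, definable subgroup of $G(K)$; divisibility therefore singles out nothing, and there is no characteristic $0$ analogue of Lemma \ref{divisibility}(6) or of \cite{BD2}, Prop.\ 3.6, identifying $G^{\sharp}$ as \emph{the} divisible Zariski-dense subgroup. You give no argument at all for $G(\C)\subseteq G^{\sharp}$ in this case, and the hint about ``the constants being the fixed field of the derivation'' does not supply one. The paper argues via purity (stable embeddedness) of the constants: the induced structure on $\C$ is that of a pure algebraically closed field, so any \infdef subgroup $H$ of $G(K)$ contained in $G(\C)$ is an \emph{algebraic} subgroup of $G(\C)$; if $H$ were properly contained in $G(\C)$ it would thus be a proper algebraic subgroup of $G$, hence not Zariski-dense. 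Applied to $H=G^{\sharp}$, which you have already placed inside $G(\C)$ and which is Zariski-dense by definition, this forces $G^{\sharp}=G(\C)$. Some argument of this purity type, rather than divisibility, is what must carry the characteristic $0$ case.
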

\begin{proof} (i) Recall first that the (prime-to-$p$) torsion is contained in $G(K)$.  In the characteristic $p$ case, $G^{\sharp} = 
{\Gpinf}$ does contain the prime-to-$p$ torsion. On the other hand as
the prime-to-$p$ torsion is Zariski-dense in $G$ any subgroup of $G$
containing the prime-to-$p$ torsion is Zariski-dense. So the lemma is
established in characteristic  $p$.  The characteristic $0$ case is
well-known and due originally to Buium. See for example Lemma 4.2 of 
\cite{Pillay-countable} where it is proved that any definable Zariski-dense subgroup of a connected commutative algebraic group $G$ contains $Tor(G)$. 
\newline
(ii)  $G^\sharp$ is connected as any finite index subgroup of a
Zariski-dense subgroup is also Zariski-dense. In the characteristic 0
case, Buium \cite{Buium1} showed that $G^\sharp$ has finite Morley rank. 
 An account, using D-groups, appears in \cite{BePi}. 
In the
characteristic $p$ case, finite $U$-rank of $G^{\sharp}$ was first shown by Hrushovski in
\cite{Hrushovski}, and can also be seen to follow easily from  Lemma \ref{divweil2}.
\newline
(iii) In characteristic $p$, this is a direct consequence of Lemma
\ref{divweil2} or of the fact that $G(\C)$ is both divisible and Zariski-dense in $G$. In characteristic $0$
it can be seen as follows: Assume $G$ to be defined over $\C$. Note that $G(\cal C)$ is definable in the differentially closed field $K$. As $\C$ is algebraically closed
$G(\C)$ is Zariski-dense in $G$, hence $G^{\sharp}\subseteq G(\C)$. 
If $G^{\sharp} \subsetneq G(\C)$, $G^{\sharp}=H(\C)$ for some proper algebraic subgroup $H$ of $G$ over $\C$, and then $H(\C)$ could not be Zariski-dense in $G$.
\end{proof}

\begin{lemma}\vlabel{surjectivity}  Let $G$, $H$ be semiabelian varieties over $K$, and $f:G\to H$ a (not necessarily separable)
rational $K$-homomorphism. Then
\newline
(i)  $f(G^{\sharp}) \subseteq H^{\sharp}$.
\newline
(ii) If $f$ is dominant then $f(G^{\sharp}) = H^{\sharp}$.
\end{lemma}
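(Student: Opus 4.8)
The plan is to exploit the two descriptions of $G^{\sharp}$ provided by Lemma \ref{othercharacterizations}: as the smallest {\infdef} Zariski-dense subgroup of $G(K)$, and as the smallest {\infdef} subgroup containing the relevant torsion. Throughout, I would use that, since $f$ is a homomorphism of algebraic groups defined over $K$, it restricts to a definable group homomorphism $f:G(K)\to H(K)$ which carries torsion of order $n$ to torsion of order dividing $n$; in particular it sends the (prime-to-$p$, in characteristic $p$) torsion of $G$ into the corresponding torsion of $H$.

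For (i), I would consider the preimage $N:=\{x\in G(K):f(x)\in H^{\sharp}\}=f^{-1}(H^{\sharp})$. As $f$ is definable and $H^{\sharp}$ is {\infdef} (an intersection of at most countably many definable groups), $N$ is an {\infdef} subgroup of $G(K)$. Since $f$ maps the (prime-to-$p$) torsion of $G$ into the torsion of $H$, which lies in $H^{\sharp}$ by Lemma \ref{othercharacterizations}(i), the group $N$ contains all of that torsion; as the latter is Zariski-dense in $G$, so is $N$. By minimality of $G^{\sharp}$ this forces $G^{\sharp}\subseteq N$, i.e. $f(G^{\sharp})\subseteq H^{\sharp}$.

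For (ii), I would first record that $f(G^{\sharp})$ is a subgroup of $H(K)$ which is Zariski-dense in $H$: as $f(G)=H$ and $G^{\sharp}$ is Zariski-dense in $G$, continuity of $f$ gives $H=f(G)=f(\overline{G^{\sharp}})\subseteq\overline{f(G^{\sharp})}$, so $\overline{f(G^{\sharp})}=H$. Then I would argue by characteristic. In characteristic $p$, $f(G^{\sharp})$ is divisible, being the homomorphic image of the divisible group $G^{\sharp}=p^{\infty}G(K)$; since $H^{\sharp}$ is the unique divisible Zariski-dense subgroup of $H(K)$ (\cite{BD2}, Prop.\ 3.6), we conclude $f(G^{\sharp})=H^{\sharp}$ outright. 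In characteristic $0$, $\omega$-stability of $DCF_{0}$ makes $G^{\sharp}$ definable, so $f(G^{\sharp})$ is a definable Zariski-dense subgroup of $H(K)$; minimality of $H^{\sharp}$ then yields $H^{\sharp}\subseteq f(G^{\sharp})$, and combining this with (i) gives equality.

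The main obstacle is the reverse inclusion in (ii) in the positive-characteristic case: there the image of an {\infdef} group under a definable map need not itself be {\infdef}, so one cannot simply invoke minimality of $H^{\sharp}$ as one does in characteristic $0$. The divisibility-plus-uniqueness argument is exactly what circumvents this, trading the (unavailable) definability of $f(G^{\sharp})$ for the structural fact that $G^{\sharp}$, and hence its homomorphic image, is divisible. One should also take slight care that the hypothesis of (geometric) surjectivity is invoked only where it is genuinely needed, namely to guarantee the Zariski-density of $f(G^{\sharp})$ in $H$.
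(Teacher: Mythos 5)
Parts (i) and the characteristic $0$ half of (ii) of your proposal are correct and essentially coincide with the paper's proof: your pullback $N=f^{-1}(H^{\sharp})\cap G(K)$ together with the torsion/minimality characterization of $G^{\sharp}$ (Lemma \ref{othercharacterizations}) is the paper's argument for (i) run directly rather than by contradiction, and in characteristic $0$ the paper likewise uses definability of $f(G^{\sharp})$ plus minimality of $H^{\sharp}$. The gap is in your characteristic $p$ argument for (ii). You invoke the principle that $H^{\sharp}$ is the unique divisible Zariski-dense subgroup of $H(K)$ and apply it to $f(G^{\sharp})$, a group you explicitly decline to show is infinitely definable. Without a type-definability hypothesis that principle is false: the prime-to-$p$ torsion $H[p']$ is a divisible subgroup of $H(K)$ (Lemma \ref{divisibility}) which is Zariski-dense in $H$ (Fact \ref{torsion}), yet for $\omega_1$-saturated $K$ it is properly contained in $H^{\sharp}=p^{\infty}H(K)$: indeed $H[p']$ is countable and consists of torsion points, whereas saturation forces $H^{\sharp}$ to be uncountable (and to contain non-torsion points, as the paper notes in its final section). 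So ``divisible $+$ Zariski-dense'' does not pin down $H^{\sharp}$; the citation of \cite{BD2}, Prop.~3.6 inside the proof of Proposition \ref{Zardense} must be read with the infinite-definability hypothesis under which it is applied there (to the group $\bigcap_{n\geq 1}[p^{n}]C_{i}$), and your use of it falls exactly outside that hypothesis.

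Moreover, your stated reason for the detour --- that the image of an infinitely definable group under a definable map need not be infinitely definable, so minimality of $H^{\sharp}$ cannot be invoked --- is mistaken under the paper's standing assumption that $K$ is $\omega_1$-saturated, and correcting it is precisely the paper's proof. By stability, $G^{\sharp}=\bigcap_{n<\omega}C_{n}$ for a decreasing chain of definable subgroups $C_{n}$; each $f(C_{n})$ is definable, and if $y\in\bigcap_{n}f(C_{n})$ then the partial type $\{x\in C_{n}: n<\omega\}\cup\{f(x)=y\}$ is finitely satisfiable, hence realized by $\omega_1$-saturation, so $f(G^{\sharp})=\bigcap_{n}f(C_{n})$ is infinitely definable. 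Since it is Zariski-dense in $H$ (your density argument is fine) and contained in $H^{\sharp}$ by part (i), minimality of $H^{\sharp}$ gives equality --- no appeal to divisibility is needed, and the characteristic $p$ case becomes identical to your characteristic $0$ case.
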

\begin{proof} (i) Let $Tor_{p'}(G)$ be the prime-to-$p$ torsion  (so all the torsion in char. $0$). Note that
$f(Tor_{p'}(G))\subseteq Tor_{p'}(H)$. If (i) fails then $ D = f(G^{\sharp})\cap H^{\sharp}$ is a proper 
{\infdef} subgroup of $H(K)$ which by Lemma \ref{othercharacterizations} contains
$f(Tor_{p'}(G))$. But then $f^{-1}(D)\cap G(K)$ is an  
{\infdef} subgroup of $G(K)$ which contains $Tor_{p'}(G)$ and is properly contained in $G^{\sharp}$, contradicting
Lemma \ref{othercharacterizations}.
\newline 
(ii) Note that $f(G^{\sharp})$ is {\infdef} (by $\omega_1$-saturation in characteristic $p$, since in this case $f(\cap G_i)=\cap f(G_i)$), and since $f$ 
is dominant,
$f(G^{\sharp})$ 
must be Zariski-dense in $H$. By part (i), and the definition of $H^{\sharp}$, $f(G^{\sharp}) = H^{\sharp}$.
\end{proof}

\begin{remark} (Characteristic $p$) Let $f:G\to H$ be as in the hypothesis of Lemma \ref{surjectivity}
  (ii). If $f$ is separable (that is induces a separable extension of
  function fields) then as we remarked in Fact \ref{separablemorphisms} 
$f_{|G(K)}: G(K) \to H(K)$ is surjective. 
If $f$ is not separable, $f$ may no longer be surjective at the level of
  $K$-rational points, but nevertheless Lemma \ref{surjectivity}(ii)
  says it is surjective on the $\sharp$-points when $K$ is
  $\omega_1$-saturated.\\
Note however that if $f$ is an isogeny, $f(p^{\infty}G(K))=p^{\infty}H(K)$ without any saturation assumption (if $f$ has degree of inseparability $n$, then $[p^n]H(K)\subseteq f(G(K))$, and one concludes by K\"onig's lemma).
\end{remark}

\vspace{5mm}
\noindent
By Lemma \ref{surjectivity} (i) 
we can consider $\sharp$ as a functor from the category of semiabelian varieties over $K$ to the category of {\infdef} groups in $K$. It is natural to ask whether $\sharp$ preserves exact sequences, and this is an important theme of the paper.

Recall that by an {\em exact sequence of algebraic groups} defined over $K$,
we  mean that the homomorphisms are not only over $K$ but also
separable. Consider two semiabelian
varieties $G_{2}, G_{3}$ over $K$, a separable surjective
$K$-homomorphism $f:G_{2}\to G_{3}$, with $Ker(f)
= G_{1}$ connected and thus a semiabelian subvariety of $G_{2}$
over $K$.  Then, by Fact \ref{separablemorphisms} the sequence $0 \to G_1(K) \to G_2(K) \to G_3(K) \to 0$
clearly remains  exact (in the category of definable groups in $K$). By Lemma \ref{surjectivity} the sequence  
$$0 \to G_{1}^{\sharp} \to G_{2}^{\sharp} \to G_{3}^{\sharp} \to 0$$
will be exact if and only if  $$G_{1}^{\sharp} = G_{1}(K)\cap G_{2}^{\sharp}.$$

\noindent So the group  $(G_{1}(K)\cap
G_{2}^{\sharp})/G_{1}^{\sharp}$ is the obstruction to exactness. 

In the
characteristic $0$ case this group which is clearly of finite Morley
rank, can be seen to be connected and embeddable in a vector group. By
Lemma 4.2 
of \cite{Pillay-countable} for example, $G_1(K)/G_1^{\sharp}$ (as a group
definable in $K$ by elimination of imaginaries) embeds definably 
in $(K,+)^{n}$ for some $n$. Hence $(G_1(K) \cap G_2^{\sharp})/G_1^{\sharp}$ also embeds in $(K,+)^{n}$, and as such is a (finite-dimensional) vector space over the field of constants of $K$. Hence
$(G_1(K) \cap G_2^{\sharp})/G_1^{\sharp}$ is connected. Note that, as
$G_1^{\sharp}$ 
is also connected, it follows that
$G_1(K) \cap G_2^{\sharp}$ itself is also  connected.

The
characteristic $p$ case is different in an interesting way. Note first,
that  the group $(G_{1}(K)\cap
G_{2}^{\sharp})/G_{1}^{\sharp}$ is not even infinitely definable, it is the quotient
of two  \infdef  groups. Such 
groups are usually called ``hyperdefinable''.

We will recall the (model theoretic) definition of a connected
component. First, if 
$G$ is an \infdef group in a stable theory, then we have DCC on intersections of uniformly relatively definable subgroups (see \cite{poizat} or \cite{wagner}). What this means is that
if $\phi(x,y)$ is a formula, then the intersection of any collection of subgroups of $G$ relatively defined by some instance of $\phi(x,y)$, is a finite subintersection. It follows that, working in a saturated model say, the intersection of all relatively definable subgroups of $G$ of finite index, is the intersection of at most $|L|$ many (where $L$ is the language). We call this intersection, $G^{0}$, the {\em connected component} of $G$. It is normal, and type-definable over the same set of parameters that $G$ is. Moreover $G/G^{0}$ is naturally a profinite group. 
In the $\omega$-stable case (or the relative Finite
Morley Rank case as in section \ref{Morleyrank}), by DCC on relatively definable
subgroups, $G^0$ will itself be relatively definable and of  finite index in $G$ . 


\begin{lemma} \vlabel{connectedcomponent}(Characteristic $p$) Let $G_{1}$ be a semiabelian subvariety of the semiabelian variety $G_{2}$,
both defined over $K$. Then $G_{1}^{\sharp}$ is the connected component of $G_{1}(K)\cap G_{2}^{\sharp}$.
\end{lemma}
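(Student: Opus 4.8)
The plan is to set $H := G_{1}(K)\cap G_{2}^{\sharp}$ and to identify $G_{1}^{\sharp}$ with the connected component $H^{0}$, i.e. the intersection of all finite-index relatively definable subgroups of $H$. First I would record the easy facts. Applying Lemma \ref{surjectivity}(i) to the inclusion $G_{1}\hookrightarrow G_{2}$ gives $G_{1}^{\sharp}\subseteq G_{2}^{\sharp}$, and since trivially $G_{1}^{\sharp}\subseteq G_{1}(K)$ we obtain $G_{1}^{\sharp}\subseteq H$. Moreover $G_{1}^{\sharp}$ is connected by Lemma \ref{othercharacterizations}(ii). These two facts already yield one inclusion $G_{1}^{\sharp}\subseteq H^{0}$: any finite-index relatively definable subgroup $S$ of $H$ meets $G_{1}^{\sharp}$ in a finite-index relatively definable subgroup of $G_{1}^{\sharp}$, which by connectedness is all of $G_{1}^{\sharp}$, so $G_{1}^{\sharp}\subseteq S$; intersecting over all such $S$ gives $G_{1}^{\sharp}\subseteq H^{0}$.

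For the reverse inclusion I would exhibit $G_{1}^{\sharp}$ itself as an intersection of finite-index relatively definable subgroups of $H$. Since $G_{1}^{\sharp}=\bigcap_{n}[p^{n}]G_{1}(K)$ and $G_{1}^{\sharp}\subseteq H$, one checks directly that
$$ G_{1}^{\sharp} \;=\; \bigcap_{n}\big(H\cap [p^{n}]G_{1}(K)\big), $$
each $[p^{n}]G_{1}(K)$ being a definable subgroup of $G_{1}(K)$, so that each $H\cap[p^{n}]G_{1}(K)$ is relatively definable in $H$. Granting that each such term has finite index in $H$, the connected component $H^{0}$ is contained in every term, whence $H^{0}\subseteq G_{1}^{\sharp}$, and combining with the first paragraph we conclude $G_{1}^{\sharp}=H^{0}$.

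The crux, and the step I expect to be the main obstacle, is the finiteness of the index $[H:H\cap[p^{n}]G_{1}(K)]$ for each fixed $n$; this is where the torsion structure enters. Write $\pi:G_{2}\to G_{3}=G_{2}/G_{1}$ for the quotient. Given $a\in H$ we have $a\in G_{2}^{\sharp}\subseteq[p^{n}]G_{2}(K)$, so $a=[p^{n}]b$ with $b\in G_{2}(K)$; since $a\in G_{1}$ we get $[p^{n}]\pi(b)=\pi(a)=0$, so $\pi(b)\in G_{3}[p^{n}](K)$. The assignment $a\mapsto\pi(b)$ is well defined modulo $\pi(G_{2}[p^{n}](K))$ (two choices of $b$ differ by an element of $G_{2}[p^{n}]\cap G_{2}(K)=G_{2}[p^{n}](K)$), and its kernel is exactly $H\cap[p^{n}]G_{1}(K)$: if $\pi(b)=\pi(t)$ with $t\in G_{2}[p^{n}](K)$ then $b-t\in G_{1}(K)$ and $a=[p^{n}](b-t)\in[p^{n}]G_{1}(K)$, and conversely. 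This embeds $H/(H\cap[p^{n}]G_{1}(K))$ into $G_{3}[p^{n}](K)/\pi(G_{2}[p^{n}](K))$, which is finite because all $p$-power torsion of a semiabelian variety is finite (Fact \ref{torsion}). This supplies the required finite index and completes the identification $G_{1}^{\sharp}=H^{0}$.
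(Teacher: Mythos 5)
Your proof is correct, and it takes a genuinely different route from the paper's. The paper's argument is stability-theoretic: since $G_{1}(K)\cap G_{2}^{\sharp}$ is infinitely definable of finite $U$-rank (Lemma \ref{othercharacterizations}), the Lascar inequality $U(H[n])+U([n]H)=U(H)$ together with finiteness of $n$-torsion shows that $[n]H$ has finite index in $H$ for \emph{every} infinitely definable subgroup $H$ of $G_{1}(K)\cap G_{2}^{\sharp}$, so such subgroups are connected if and only if they are divisible; the lemma then follows because $G_{1}^{\sharp}=p^{\infty}G_{1}(K)$ is the maximal divisible subgroup of $G_{1}(K)$ (Lemma \ref{divisibility}). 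You avoid $U$-rank altogether: you realize $G_{1}^{\sharp}$ as the explicit intersection $\bigcap_{n}\bigl(H\cap[p^{n}]G_{1}(K)\bigr)$ of relatively definable subgroups of $H:=G_{1}(K)\cap G_{2}^{\sharp}$ (each $[p^{n}]G_{1}(K)$ is definable by an existential formula, a fact the paper itself uses in Proposition \ref{Zardense}), and you obtain the crucial finite-index property by embedding $H/(H\cap[p^{n}]G_{1}(K))$ into $G_{3}[p^{n}](K)/\pi(G_{2}[p^{n}](K))$ with $G_{3}=G_{2}/G_{1}$, which is finite by Fact \ref{torsion}; your well-definedness and kernel computations check out (the step $b-t\in G_{1}(K)$ uses $G_{1}(\overline{K})\cap G_{2}(K)=G_{1}(K)$, valid since $G_{1}$ is closed in $G_{2}$ and defined over $K$). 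Your approach is more elementary and more explicit: it produces the actual finite-index subgroups cutting out $G_{1}^{\sharp}$, hence shows directly that $(G_{1}(K)\cap G_{2}^{\sharp})/G_{1}^{\sharp}$ is profinite (the content of the Remark following the lemma), and it is in effect a finite-level precursor of the snake-lemma/Tate-module identification of this quotient with $T_{p}G_{3}(K)/\tilde{f}(T_{p}G_{2}(K))$ in Proposition \ref{exactnessofU}. What the paper's argument buys is brevity, given the stability machinery, and the stronger intermediate fact that every infinitely definable subgroup of $G_{1}(K)\cap G_{2}^{\sharp}$ is connected if and only if it is divisible, not merely the identification of the connected component itself.
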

\begin{proof} First by \ref{surjectivity},  $G_{1}^{\sharp}$ is a
  subgroup of $G_{1}(K)\cap G_{2}^{\sharp}$. By Lemma
  \ref{othercharacterizations}   $G_{1}(K)\cap G_{2}^{\sharp}$ is
      {\infdef} of finite $U$-rank.  Hence, for any $H$ \infdef subgroup
  of $G_{1}(K)\cap G_{2}^{\sharp}$, classical $U$-rank
  inequalities for groups give us that $U(H[n])  + U([n] {H}) = U(H)$.
  As for each $n$ the
      $n$-torsion of $H$  is finite, $U([n]H)= U(H)$.  It follows
  that $H$ is
  connected iff it is  divisible: If $H$ is connected, then any proper
  infinitely definable subgroup of $H$ has strictly smaller
  $U$-rank than $H$, so
  for every $n$,  $[n]H = H$, and $H$ is divisible. But $G_{1}^{\sharp}$
  is  
the biggest divisible subgroup of $G_{1}(K)$. Thus $G_{1}^{\sharp}$ must coincide with the connected component of $G_{1}(K)\cap G_{2}^{\sharp}$.
\end{proof}

\begin{remark} By Lemma \ref{connectedcomponent},  the quotient  $(G_{1}(K)\cap
  G_{2}^{\sharp})/G_{1}^{\sharp}$ is a profinite group. If
  $G_{2}^{\sharp}$ had relative Morley rank, the quotient would have to
  be finite (as remarked above). We will see in section \ref{charp}
  an example where the quotient is infinite and   give an 
explicit description of
  this  quotient in terms of suitable Tate modules.
\end{remark}

For the record we now mention cases (in characteristic $p$) where $G^{\sharp}$ has (finite) relative Morley rank.
\begin{fact} \vlabel{caseswithRMR}(Characteristic  $p$). Let $G$ be a semiabelian variety over $K$. Then
\newline
(i) If $G$ descends to ${\Kpinf}$ (in particular if $G$ is an algebraic torus) then $G^{\sharp}$ has finite relative Morley rank.
\newline
(ii) If $G = A$ is an abelian variety then $A^{\sharp}$ has finite relative Morley rank.
\end{fact}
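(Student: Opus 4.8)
The plan is to handle (i) by descent to the constants and (ii) by reducing, via isogeny, to the simple case and then invoking the finiteness of the relevant Tate module.

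For (i), suppose $G$ descends to $\C = \Kpinf$, so there is a semiabelian variety $G_0$ over $\C$ and an isomorphism $\phi: G \to G_0\times_{\C}K$ defined over $K$. Applying Lemma \ref{surjectivity}(i) to $\phi$ and to $\phi^{-1}$ shows that $\phi$ restricts to an isomorphism of \infdef groups $G^\sharp \to (G_0\times_\C K)^\sharp$, so it suffices to bound the relative Morley rank of the latter. Since $G_0\times_\C K$ is defined over $\C$, Lemma \ref{othercharacterizations}(iii) gives $(G_0\times_\C K)^\sharp = G_0(\C)$. Now $\C$ is an algebraically closed field which is stably embedded in $K$ with no extra induced structure, i.e.\ whose induced structure is that of a pure algebraically closed field; hence the relatively definable subsets of $G_0(\C)$ are exactly its $\C$-constructible subsets, and relative Morley rank on $G_0(\C)$ coincides with the Morley rank of $ACF$, that is with algebraic dimension. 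Therefore $G^\sharp$ has relative Morley rank $\dim G_0 = \dim G < \infty$. For the parenthetical clause, a torus $T$ over $K$ is $K$-split by Fact \ref{subgroups}(i), hence $K$-isomorphic to $\Gm^{n}$, which is defined over $\Fp \subseteq \C$; so $T$ descends to $\C$ and the first clause applies.

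For (ii), by Fact \ref{abeliansimple} the abelian variety $A$ is $K$-isogenous to a product $\prod_i A_i$ of simple abelian varieties; fix such an isogeny $f$. As $f$ is a surjective homomorphism, Lemma \ref{surjectivity}(ii) gives $f(A^\sharp) = (\prod_i A_i)^\sharp = \prod_i A_i^\sharp$, while $\ker(f|_{A^\sharp}) \subseteq \ker f$ is finite. A surjective definable map between \infdef groups with finite fibres preserves the property of having finite relative Morley rank in both directions by Lemma \ref{mapRMR} (the fibres have relative Morley rank $0$, so part (i) bounds $RM$ of the source by $RM$ of the target, and part (ii) gives the reverse). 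Combined with additivity of relative Morley rank over exact sequences, Corollary \ref{RMRexact} applied to the product decomposition, this reduces the claim to the case where $A$ is simple.

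The crux is therefore the simple case, and this is where the main obstacle lies. We already know $A^\sharp$ has finite $U$-rank (Lemma \ref{othercharacterizations}(ii)), but finite $U$-rank does \emph{not} in general force finite relative Morley rank; indeed the semiabelian counterexample of Section \ref{sectionexactness} has finite $U$-rank yet no relative Morley rank. The extra input for abelian varieties is the finiteness of the $p$-divisible structure of a \emph{proper} group: the obstruction group $(G_1(K)\cap G_2^\sharp)/G_1^\sharp$ measured in Lemma \ref{connectedcomponent} is, for $A$, governed by the Tate module $T_pA$, which is a finitely generated $\Z_p$-module. Using the description of $A^\sharp = p^\infty A(K)$ in terms of $p$-power divisibility together with Lemma \ref{divweil2}, one embeds the relevant relatively definable quotients into finite-rank $\Z_p$-modules, which forces only finitely many of them and hence $RM_{A^\sharp}(A^\sharp) < \infty$. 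This last step, bridging the gap between finite $U$-rank and finite relative Morley rank, is precisely where properness of an abelian variety (as opposed to the torus factor appearing in a semiabelian extension) is essential, and is carried out via the Tate-module analysis of Section \ref{TorsionTate}.
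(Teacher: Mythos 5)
Your part (i) and the isogeny reduction at the start of part (ii) match the paper's own argument and are fine: descend to $\C$, use $G^{\sharp}=G(\C)$ and purity of the constants for (i); reduce to simple factors via Fact \ref{abeliansimple}, Lemma \ref{mapRMR} and Corollary \ref{RMRexact} for (ii). The genuine gap is in the simple case, which you correctly identify as the crux but then dispatch with a sketch that does not work. Your key claim is that, since $T_pA$ is a finitely generated $\Z_p$-module, the ``relevant relatively definable quotients'' embed into finite-rank $\Z_p$-modules and are therefore finite, forcing $RM_{A^{\sharp}}(A^{\sharp})<\infty$. This is false as stated: a finite-rank $\Z_p$-module such as $\Z_p$ itself is an infinite profinite group, and in the paper's own example (section \ref{sectionellipticcurves}) the obstruction quotient $(G^{\sharp}\cap T(K))/T^{\sharp}$ is exactly $T_pE(K)\cong \Z_p$ --- finitely generated over $\Z_p$, yet infinite. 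So finite rank of the Tate module is not what separates the abelian case from the semiabelian counterexample. There is also a structural problem: the equivalence ``obstruction quotient finite $\Leftrightarrow$ relative Morley rank exists'' (Proposition \ref{Morleysemiabelian}) is proved \emph{using} Fact \ref{caseswithRMR} as an input (it needs $T^{\sharp}$ and $A^{\sharp}$ to already have relative Morley rank), so it cannot be run backwards to prove the fact; and for $A$ simple there are no proper semiabelian subvarieties at all, so controlling quotients attached to exact sequences is vacuous, while relative Morley rank is a condition on \emph{all} relatively definable subsets, not just subgroups.

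The missing ingredients are precisely what the paper supplies for the simple case. First, for $A$ simple, $A^{\sharp}$ has no proper infinite (relatively) definable subgroup; this is a cited, nontrivial fact (2.16 of \cite{Hrushovski}, or Cor.~3.8 of \cite{BD2}), not a consequence of finite $U$-rank. Second, one takes a complete type $p$ of $U$-rank $1$ over a small model, with set of realizations $Y\subseteq A^{\sharp}$, checks that $Y$ is minimal and indecomposable, translates it to some $X$ containing $0$, and applies Zilber's indecomposability theorem for {\infdef} groups in stable theories (Theorem 3.6.11 of \cite{wagner}) to get $A^{\sharp}=X+\cdots+X$ ($m$ times); by the first point the group generated by $X$ cannot be a proper infinite subgroup. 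Third, the addition map $X^{m}\to A^{\sharp}$ is a surjective definable map with $RM_{X^{m}}(X^{m})=m$, so Lemma \ref{mapRMR}(ii) gives $RM_{A^{\sharp}}(A^{\sharp})\le m$. Without something equivalent to these steps, the passage from finite $U$-rank to finite relative Morley rank is exactly the gap that the semiabelian counterexample shows cannot be crossed in general.
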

\begin{proof} 
(i) We may assume that $G=(G_0)_K$ for some $G_0$ over ${\Kpinf}$. Then by
  Lemma \ref{divweil2} $G^{\sharp} = {\Gpinf} = G({\Kpinf})$. As $\Kpinf$ is a
  ``pure" algebraically closed field inside $K$, $G({\Kpinf})$ has relative
  Morley rank equal to the (algebraic) dimension of $G$. 
\newline
(ii) The abelian variety $A$ is isogenous to a product of simple abelian
varieties. So we may reduce to the case where $A$ is simple. In that
case $A^\sharp$ has no proper infinite definable subgroup (2.16 in
\cite{Hrushovski} or  Cor.3.8 in
\cite{BD2}). By stability, $A^\sharp$ has no proper infinite {\infdef} subgroup. 
We will now use an appropriate version of Zilber's indecomposability theorem to see that 
$A^{\sharp}$ has finite relative Morley rank. As $A^{\sharp}$ has finite $U$-rank, there is some small submodel $K_{0}$ (over which $A^{\sharp}$ is defined) and a complete type $p(x)$ over $K_{0}$ extending ``$x\in A^\sharp$", which has $U$-rank $1$ (and is of course stationary). Let $Y\subseteq A^\sharp$ be the set of realizations of $p$. Then $Y$ is an 
{\infdef} subset of $A^\sharp$ which is ``minimal", namely $Y$ is infinite and every 
relatively definable subset of $Y$ is either finite or cofinite.  We claim that $Y$ is ``indecomposable" in $A^\sharp$, namely for each relatively definable subgroup $H$ of $A^\sharp$, $|Y/H|$ is $1$ or infinite. For if not, then as remarked earlier, the intersection of all the images of $H$ under automorphisms fixing $K_{0}$ pointwise will be a finite subintersection $H_{0}$, now defined over $K_{0}$, and we will have $|Y/H_{0}| >1$ and finite, contradicting stationarity (or even completeness) of $p$. Let now $X$ be a translate of $Y$ which contains the identity $0$. Then $X$ is still a minimal {\infdef} subset of $A^\sharp$. Moreover
Theorem 3.6.11 of \cite{wagner} or Theorem 6.10 of \cite{poizat}  apply  to this situation, to yield that the subgroup $B$ say of $A^\sharp$ which is generated by $X$ is  {\infdef} and moreover 
of the form $X + X + ... + X$  ($m$ times) for some $m$. As noted above, it follows that $B = A^\sharp$, and so the function $f:X^{m} \to A^\sharp$ is a definable surjective function between {\infdef} sets, in the sense of section \ref{Morleyrank}. But as $X$ is minimal, clearly $RM_{X}(X) = 1$ and $RM_{X^{m}}(X^{m}) = m$. By Lemma \ref{mapRMR} (ii), $A^\sharp$ has finite relative Morley rank too. 
\end{proof} 

\vspace{5mm}
Let us remark that, in the context of the proof of (ii) above, when $A$ is a simple abelian variety over $K$ which does not descend to the constants, then via the dichotomy theorem for minimal types in separably closed fields, $A^{\sharp}$ is connected, of $U$-rank 1 hence has relative Morley rank $1$. However we wanted to avoid the appeal to the dichotomy theorem, and hence above we use the proof involving a version of Zilber's indecomposability theorem.

\section{Characteristic $p$}

\vlabel{charp}

Here we follow the ``naive'' algebraic approach which works in
  a very simple way in 
  characteristic $p$. In order to highlight the uniformity with char. 0
  we will, in the next  section, adopt a point of view closer to algebraic geometry.

We deal now with the characteristic $p$ case. Let $G$ be a semiabelian
variety over any  model $(K,\partial)$ of $CHF_{p,1}$, that is any
separably closed field of degree of imperfection $1$.

\subsection{Torsion points, Tate modules and descent}\vlabel{TorsionTate}


We make no saturation hypothesis for the moment.

\begin{definition} \vlabel{Gtilde}
We define $\tilde G$ as the inverse limit
$$
\tilde{G}:=\mathop{\text{lim}}_{\leftarrow} (G\stackrel{[p]}{\longleftarrow} G \stackrel{[p]}{\longleftarrow} \ldots ).
$$
In particular, for $L$ an extension of $K$ (we will mainly consider $L=K$ or $L=\overline K$),
$$
\tilde{G}(L)=\{(x_i)_{i\in \N}\in G(L)^{\N}\, :\, \forall i\ge 0, x_i=[p]x_{i+1}\}.
$$
Let $\pi_G$ be the projection onto the ``left component'' $G(L)$. The
kernel of $\pi_G$ is precisely $T_pG(L)$, where $T_pG$ is the Tate module of $G$.\\
Its $L$-points in an arbitrary algebraically closed extension $L$  of $K$ coincide with the sequences of torsion points in $\overline K$, 
$$ T_pG(\overline K)=\{(x_i)_{i\in \N}\in G(\overline K)^{\N}\, : \, x_0=0,\forall i\ge 0, x_i=[p]x_{i+1}\}
$$
\end{definition}

By definition, $\tilde G$ is a proalgebraic group, i.e. an inverse limit of algebraic groups. In section \ref{Gtildecharp}, $\tilde G$
will be viewed as a group scheme. Here we adopt a more naive point of view, closer to model theory. 
Objects such as $\tilde G(K)$ and $T_pG(K)$ are what are called
``$*$-definable" groups in $K$ (projective limits of definable groups).

Let us note that for a given $g_0\in G(K)$, $g_0 \in G^{\sharp}$ if and only if there is some $(x_i)_{i\in
  \N}\in \tilde{G}(K)$ with $g_0 = x_0$; we deduce directly from this  the relation between the Tate module of $G$ and $G^{\sharp}$.
\begin{lemma} \vlabel{sharpTate}
The morphism $\pi_G$ induces an exact sequence of $*$-definable groups.
$$
0 \rightarrow T_pG(K) \rightarrow \tilde G(K) \stackrel{\pi_G}{\rightarrow} G^{\sharp} \rightarrow 0.
$$
\end{lemma}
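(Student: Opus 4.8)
The plan is to verify exactness at each of the three spots, observing that only surjectivity of $\pi_G$ onto $G^{\sharp}$ requires genuine work. Exactness at $T_pG(K)$ is immediate, since the map $T_pG(K)\to\tilde G(K)$ is the inclusion of a subgroup and hence injective. Exactness at $\tilde G(K)$ holds by the very definition of the Tate module, as $T_pG(K)=\ker(\pi_G)$.

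For exactness at $G^{\sharp}$ I would first check that the image of $\pi_G$ is contained in $G^{\sharp}$. Indeed, if $(x_i)_{i\in\N}\in\tilde G(K)$, then iterating the defining relations $x_i=[p]x_{i+1}$ gives $x_0=[p^n]x_n$ for every $n$; since each $x_n$ lies in $G(K)$, this shows $x_0\in\bigcap_n [p^n]G(K)=\Gpinf=G^{\sharp}$, using that in characteristic $p$ the group $G^{\sharp}$ coincides with $\Gpinf$.

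The substantive point is the reverse inclusion, namely that every $g_0\in G^{\sharp}$ arises as $x_0$ for some sequence in $\tilde G(K)$. Here I would reuse exactly the apparatus from the proof of Lemma \ref{divisibility}(6). Given $g_0\in \Gpinf$, form the tree $T(g_0)$ whose nodes are the elements $h\in G(K)$ with $[p^n]h=g_0$ at level $n$, each node $g_s$ having as successors the finitely many elements $g_{s\smallfrown i}\in G(K)$ with $[p]g_{s\smallfrown i}=g_s$; finiteness of the branching is guaranteed by finiteness of the $p$-torsion of $G$ (Fact \ref{torsion}.2). Since $g_0\in\Gpinf$ the tree is infinite, and being finitely branching it has, by K\"onig's Lemma, an infinite branch $g_0,x_1,x_2,\ldots$ with each $x_i\in G(K)$ and $[p]x_{i+1}=x_i$. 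This branch is precisely an element of $\tilde G(K)$ mapping to $g_0$ under $\pi_G$, which yields surjectivity.

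I expect the main obstacle to be this surjectivity step, and more precisely the passage from ``$g_0$ is divisible by every power of $p$ inside $G(K)$'' to a single \emph{coherent} system of $p$-power preimages: the $p^n$-th preimages exist separately for each $n$, but they need not a priori be chosen compatibly, i.e. so that applying $[p]$ to the $p^{n+1}$-preimage returns the $p^n$-preimage. Finiteness of the $p$-torsion, which makes the tree finitely branching, together with K\"onig's Lemma, is exactly what bridges this gap and constitutes the crux of the argument.
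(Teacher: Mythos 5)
Your proof is correct and takes essentially the same route as the paper: the paper deduces the lemma from the remark immediately preceding it (a point $g_0\in G(K)$ lifts to a sequence in $\tilde G(K)$ if and only if $g_0\in G^{\sharp}$), and that remark is justified by exactly the tree-plus-K\"onig's-Lemma argument of Lemma \ref{divisibility}(6) which you reuse for the surjectivity step. Your identification of the coherence of the $p$-power preimages as the crux, bridged by finite $p$-torsion and K\"onig's Lemma, matches the paper's reasoning precisely.
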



In the case of ordinary semiabelian varieties, if the dimension of the
abelian part is $a$,  $T_pG(\overline K)\simeq
\Z_p^a$ (see Fact \ref{torsion}).
We relate now the part of the $p^{\infty}$-torsion lying in $K$ with
issues of descent. Most  of the following results are certainly well-known,
see for example \cite{voloch} 
for the description of the torsion of $G$
for abelian schemes of maximal Kodaira-Spencer rank. But we have  found
no systematic exposition which we could quote and furthermore, we choose to
give here very elementary proofs which are suitable for our
purpose.

\begin{proposition}\vlabel{abelianseparabletorsion} Let $G$ be an
  ordinary semiabelian variety over  $K$. Then for every $n$, $G[p^n](\overline K) =
  G[p^n](K) $ if and only if $G$ descends to $K^{p^n}$. In particular,
  $G$ descends to $K^{p^\infty}$ if and only if $G[p^\infty](\overline K) = G[p^\infty](K)$
if and only if $T_pG(\overline K)=T_pG(K)$.
\end{proposition}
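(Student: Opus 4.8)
The plan is to prove the statement by reducing the $p^\infty$-torsion claim to the $p^n$-torsion claim, and then proving the latter via the Verschiebung and Lemma~\ref{divweil2}. First I would establish the equivalence for a fixed $n$: that $G[p^n](K) = G[p^n]$ (i.e.\ all the $p^n$-torsion is already $K$-rational) is equivalent to $G$ descending to $K^{p^n}$. For the direction from descent to rationality, if $G$ is defined over $K^{p^n}$ then, since $G$ is ordinary, the $n$-th Verschiebung $V_n : Fr^n G \to G$ is separable (Fact~\ref{separableverschiebung}) and defined over $K^{p^n}$; its kernel is exactly $G[p^n]$ (because $Fr^n \circ V_n = [p^n]$ and $Fr^n$ is purely inseparable). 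A separable isogeny between groups defined over the separably closed field $K^{p^n}$ has all its kernel points $K^{p^n}$-rational, hence $G[p^n] \subseteq G(K^{p^n}) \subseteq G(K)$, giving $G[p^n](K) = G[p^n]$.

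For the converse direction within the fixed-$n$ equivalence, I would assume $G[p^n](K) = G[p^n]$ and aim to descend $G$ to $K^{p^n}$. The natural route is to use the separable Verschiebung $V_n : Fr^n G \to G$: since $Fr^n G$ is defined over $K^{p^n}$ and $V_n$ is a separable isogeny with kernel $(Fr^n G)[\,\cdot\,]$ corresponding under the duality to $G[p^n]$, the hypothesis that this kernel is rational should let me invoke Proposition~\ref{hardseparableisogeny} (or the descent package around Lemma~\ref{easyseparableisogeny}) to conclude that $G$ itself descends to $K^{p^n}$. The key point is that rationality of the relevant finite kernel over $K^{p^n}$ makes the quotient construction, and hence the descent isomorphism, available over $K^{p^n}$.

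Next I would deduce the $p^\infty$ statement by taking the intersection over all $n$. Since $G$ descends to $K^{p^\infty} = \bigcap_n K^{p^n}$ if and only if it descends to $K^{p^n}$ for every $n$ (this is exactly the content invoked in the proof of Proposition~\ref{Dstructuredescent} and Lemma~\ref{descentsemiabelian}, passing through D-structures or moduli spaces), the fixed-$n$ equivalences assemble into: $G$ descends to $K^{p^\infty}$ iff $G[p^n](K) = G[p^n]$ for all $n$, which is precisely $G[p^\infty](K) = G[p^\infty]$. Finally, for the reformulation in terms of Tate modules, I would observe that $T_pG(\overline K) = \varprojlim G[p^n](\overline K)$ and $T_pG(K)$ consists of those compatible sequences all of whose entries are $K$-rational; thus $T_pG(K) = T_pG(\overline K)$ holds exactly when every $G[p^n]$ is $K$-rational, i.e.\ when $G[p^\infty](K) = G[p^\infty]$.

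The main obstacle I expect is the converse direction of the fixed-$n$ equivalence — deriving descent to $K^{p^n}$ from rationality of the $p^n$-torsion. Proving rationality from descent is a clean degree/separability argument, but going backwards requires genuinely producing a model of $G$ over $K^{p^n}$, and here one must use that $G$ is ordinary (so that the Verschiebung is separable and its kernel is the full $p^n$-torsion) together with the proper-base-change/cohomological descent input of Proposition~\ref{hardseparableisogeny}. Care is also needed to ensure the semiabelian (not merely abelian) case is handled, which presumably goes through the torus/abelian-part decomposition as in Lemma~\ref{descentsemiabelian}; verifying that the descent of the kernel datum is compatible with the extension structure is the delicate point.
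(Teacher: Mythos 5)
Your proposal is correct and follows essentially the paper's own proof: descent to $K^{p^n}$ gives rationality of $G[p^n]$ because ordinarity makes the Verschiebung $V_n$ separable and defined over the separably closed field $K^{p^n}$, so $\mathrm{Ker}(V_n)=Fr^n(G[p^n])$ consists of $K^{p^n}$-rational points; conversely, rationality of $G[p^n]$ makes $\mathrm{Ker}(V_n)$ a finite group of $K^{p^n}$-points, so $G\cong Fr^nG/\mathrm{Ker}(V_n)$ is defined over $K^{p^n}$; and the $p^\infty$ statement is obtained from Lemma~\ref{descentsemiabelian} exactly as you say.

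One correction, though, to your last paragraph: Proposition~\ref{hardseparableisogeny} (proper base change) is neither needed nor applicable for the converse. That proposition requires the \emph{target} of the separable isogeny to be defined over an \emph{algebraically closed} subfield, whereas here $K^{p^n}$ is only separably closed and is the field of definition of the \emph{source} $Fr^nG$; what you call ``the key point'' --- the elementary quotient construction, i.e.\ the mechanism of Lemma~\ref{easyseparableisogeny} with rationality of the kernel replacing algebraic closedness --- is the entire argument, and it applies directly to semiabelian $G$, so no torus/abelian-part d\'evissage enters the fixed-$n$ equivalence. The decomposition of Lemma~\ref{descentsemiabelian} is used only to pass from descent to every $K^{p^n}$ to descent to $K^{p^\infty}$. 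Finally, note a technical wrinkle the paper handles and your sketch omits: Lemma~\ref{descentsemiabelian} is proved under an $\omega_1$-saturation hypothesis, while the present proposition is stated for arbitrary $K$; the paper reduces to the saturated case by passing to an $\omega_1$-saturated elementary extension $L$ and using that $L^{p^\infty}$ and $K$ are linearly disjoint over $K^{p^\infty}$.
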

\begin{proof} Let us fix $n\ge 1$. If $G$ descends to $K^{p^n}$, we may
  assume that $G=(G_0)_K$ for some $G_0$ over $K^{p^n}$. Since $G_0$ is ordinary, $V_n$ is separable and the geometric points of the kernel of $V_n$ are
$K^{p^n}$-rational, and since $[p^n]=V_n \circ F^n$, $G_0[p^n](\overline K)=F^{-n}(\text{Ker}(V_n)(\overline K))\subseteq G_0(K)$.\\
Conversely, assume that $G[p^n](\overline K)\subseteq G(K)$. Since $V_n$ is separable, $G$ is isomorphic to the quotient $F^n G/\text{Ker}(V_n)$. But $\text{Ker}(V_n)(\overline K)=F^n(G[p^n](\overline K))$ is a finite group of $K^{p^n}$-rational points, hence
$F^nG/\text{Ker}(V_n)$ descends to $K^{p^n}$.\\
The ``in particular'' statement follows from Proposition \ref{modulispace}.
\end{proof}

\smallskip

\begin{corollary}\vlabel{allptorsion} Let $K_0 $ be an algebraically closed  field and
  $K_1 >K_0$ a finitely generated extension of $K_0$. Let $G$ be an ordinary
  semiabelian variety over $K_1$. If $G[p^\infty](\overline{K_1}) = G[p^\infty](K_1)$,
  then $G$ descends to $K_0$. \end{corollary}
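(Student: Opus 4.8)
The plan is to reduce the statement to the case of a separably closed base field, where Proposition \ref{abelianseparabletorsion} applies, and then to descend the resulting isomorphism back down to $K_1$. Using Fact \ref{invariant1}, choose a separably closed field $K$ of degree of imperfection $1$ with $K_1 \subseteq K$ and $K^{p^\infty} = K_0$. The semiabelian variety $G_K := G\times_{K_1}K$ is again ordinary, since ordinarity is a geometric property invariant under base change. Moreover the $p$-power torsion consists of geometric points, so the hypothesis $G[p^\infty](K_1)=G[p^\infty]$ forces $G_K[p^\infty](K)=G[p^\infty]$ as well. Hence the ``in particular'' part of Proposition \ref{abelianseparabletorsion} yields that $G_K$ descends to $K^{p^\infty}=K_0$: there is a semiabelian variety $G_0$ over $K_0$ and an isomorphism $\phi\colon G_K \to G_0\times_{K_0}K$ defined over $K$.

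It remains to produce such an isomorphism already over $K_1$, which I expect to be the main obstacle. First I would push $\phi$ down to the separable closure $K_1^{sep}\subseteq K$: since $K_1^{sep}$ is itself separably closed and $G$, $G_0$ are defined over $K_1\subseteq K_1^{sep}$, the graph of $\phi$ is a connected closed subgroup of $(G\times G_0)_K$ which, by Fact \ref{subgroups}(iii) applied over $K_1^{sep}$, is the base change of one defined over $K_1^{sep}$; thus $\phi$ is defined over $K_1^{sep}$. The genuine issue is then the Galois descent from $K_1^{sep}$ to $K_1$: writing $c_\sigma=\phi^{-1}\circ{}^{\sigma}\phi\in \mathrm{Aut}(G_{K_1^{sep}})$ for $\sigma\in \mathrm{Gal}(K_1^{sep}/K_1)$, one must show this cocycle is trivial, equivalently that $\phi$ is $\mathrm{Gal}$-invariant and hence descends to $K_1$.

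This is exactly where the full strength of the hypothesis enters, beyond merely passing to $K$. Because all the $p$-power torsion of $G$ is $K_1$-rational, and all the (algebraic) torsion of $G_0$ lies in $G_0(K_0)\subseteq G_0(K_1)$, the Galois group acts trivially on both Tate modules $T_pG$ and $T_pG_0$ of Definition \ref{Gtilde}; hence $T_p(\phi)$ is Galois-invariant and $T_p(c_\sigma)=\mathrm{id}$ for every $\sigma$. On the abelian part this forces $c_\sigma$ to act trivially, since automorphisms of an abelian variety inject into the automorphisms of its Tate module, while on the (split) torus part every isomorphism is given by an integer matrix and so is automatically fixed by $\mathrm{Gal}$. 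Thus $c_\sigma=\mathrm{id}$, $\phi$ descends to $K_1$, and $G\cong G_0\times_{K_0}K_1$ over $K_1$, i.e. $G$ descends to $K_0$. The point I would check most carefully is the injectivity of $\mathrm{Aut}(G_0)$ into $\mathrm{Aut}(T_pG_0)\times\mathrm{Aut}(T_0)$ in the semiabelian (as opposed to purely abelian) case, which rests on there being no nontrivial homomorphisms from the abelian quotient to the torus.
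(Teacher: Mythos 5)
Your first two steps are exactly the paper's entire proof: embed $K_1$ into a separably closed $K$ of degree of imperfection $1$ with $K^{p^\infty}=K_0$ via Fact \ref{invariant1}, note the torsion hypothesis passes up to $K$, and apply the ``in particular'' clause of Proposition \ref{abelianseparabletorsion}. The paper stops there, and deliberately so: the descent it obtains (and uses later) is witnessed by an isomorphism over $K$ --- equivalently, by your graph argument via Fact \ref{subgroups}(iii), over $K_1^{sep}$ --- and in every later application of this corollary the base field is separably closed, where Fact \ref{subgroups}(iii) makes ``isomorphic over an extension'' and ``isomorphic over the base'' coincide.

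The second half of your argument, where you try to force the isomorphism down to $K_1$, has a genuine gap at the toric part, and in fact the strengthened statement you are aiming at is false. The assertion that ``on the (split) torus part every isomorphism is given by an integer matrix and so is automatically fixed by $\mathrm{Gal}$'' presupposes that the toric part $T$ of $G$ is split \emph{over $K_1$}; the hypotheses only guarantee splitness over $K_1^{sep}$, and if $T$ is a nontrivial $K_1$-form then ${}^\sigma\phi_T$ differs from $\phi_T$ by the nontrivial cocycle defining that form, so $c_\sigma\vert_T\neq\mathrm{id}$. This cannot be repaired, because the conclusion ``isomorphic over $K_1$ to a base change from $K_0$'' is simply false under the stated hypotheses: take $G=T$ to be the norm-one torus of a separable quadratic extension of $K_1$. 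This is a semiabelian variety with trivial abelian part, hence vacuously ordinary, and $G[p^\infty]=\{0\}=G[p^\infty](K_1)$, so all hypotheses hold; yet $G$ is not $K_1$-isomorphic to any torus defined over $K_0$, since tori over the algebraically closed field $K_0$ are split and remain split after base change, while $G$ is not split over $K_1$. So the corollary must be read with the descent isomorphism over $K_1^{sep}$ (or $K$), which is exactly what your first half --- i.e.\ the paper's proof --- already delivers. By contrast, your treatment of the abelian part is essentially sound: for an ordinary abelian variety the $p$-power torsion is Zariski dense, so $\mathrm{End}(A)\rightarrow\mathrm{End}(T_pA)$ is injective (this does need ordinarity --- it fails completely in the supersingular case, where $T_pA=0$), and your cocycle computation then shows that for ordinary \emph{abelian} varieties the descent isomorphism can indeed be taken over $K_1$. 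It is precisely the torus, not the point you flagged about $\mathrm{Hom}(A,T)=0$ (which is true and harmless), that breaks the stronger statement.
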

\begin{proof} As $K_0$ is algebraically closed, $K_1$ is a separable
  extension of $K_0$, hence it is contained in the separable closure
  of $K_0(t_1,\ldots, t_n)$ for $t_1,\ldots,t_n$ algebraically
  independent. 
Then (Fact \ref{invariant1}) there is a separably
  closed field $K$ of degree of imperfection $1$, extending $K_1$ and
  such that $K_0 = K^{p^\infty}$. 
We can now apply Proposition \ref{abelianseparabletorsion} to conclude
  that $G$ descends to $K^{p^\infty}$. \end{proof}

This yields easily the following result 
(compare with Fact \ref{easyseparableisogeny}, here $f$ is no longer separable but $G$ is ordinary).

\begin{corollary}\vlabel{ordinaryisogenydescent} Let $G$ be an ordinary semiabelian variety over some
  algebraically closed field $K_0$. If $H$ is any semiabelian variety over
  $K_1 > K_0$ such that there is an isogeny $f$ from $G_{K_1}$ onto $H$, then
  $H$ descends to $K_0$. \end{corollary}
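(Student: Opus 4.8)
The plan is to deduce the result from Corollary \ref{allptorsion} by transporting the $K_0$-rationality of the $p^\infty$-torsion of $G$ across the isogeny. First I would arrange the fields: since $G$ is defined over $K_0$ and since $H$ and $f$ involve only finitely many coefficients, all of these are defined over a finitely generated extension $K_1'$ of $K_0$ (enlarging $K_1$ if $f$ is defined over a larger field). It then suffices to show that $H$ descends to $K_0$ working over this finitely generated field $K_1'$.

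Second, I would verify that $H$ is again ordinary, which is needed to invoke Corollary \ref{allptorsion}. Since $f$ has finite kernel it preserves dimension; the image $f(T)$ of the maximal torus $T$ of $G$ is a torus of the same dimension $t$, and the induced isogeny between $H/f(T)$ and the abelian part $A$ of $G$ shows that $f(T)$ is the maximal torus of $H$, so $H$ has the same toric and abelian dimensions $t,a$ as $G$. Moreover $f$ restricts to a surjection $G[p^\infty]\to H[p^\infty]$ with finite kernel, and since $G[p^\infty]\cong (\Q_p/\Z_p)^a$ by ordinarity of $G$, the quotient $H[p^\infty]$ is again isomorphic to $(\Q_p/\Z_p)^a$; hence $H$ has $p$-rank $a$ and is ordinary.

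The key step is the rationality statement $H[p^\infty](K_1')=H[p^\infty]$. As $K_0$ is algebraically closed and $G$ is defined over $K_0$, every $G[p^n]$ lies in $G(K_0)$, so $G[p^\infty]\subseteq G(K_0)$. I would then check $f(G[p^\infty])=H[p^\infty]$: given $h\in H[p^\infty]$, choose $g\in G(\overline K)$ with $f(g)=h$; as $p^n g\in \ker f$ is finite for some $n$, $g$ is torsion, and writing $g=g_p+g_{p'}$ for its $p$-primary and prime-to-$p$ parts, the equality $h=f(g_p)+f(g_{p'})$ together with $H[p^\infty]\cap H[p']=\{0\}$ forces $f(g_{p'})=0$ and $h=f(g_p)\in f(G[p^\infty])$. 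Since $f$ is defined over $K_1'$ and $G[p^\infty]\subseteq G(K_0)\subseteq G(K_1')$, it follows that $H[p^\infty]=f(G[p^\infty])\subseteq H(K_1')$. Now Corollary \ref{allptorsion}, applied to the ordinary semiabelian variety $H$ over the finitely generated extension $K_1'>K_0$, yields that $H$ descends to $K_0$.

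The one place requiring genuine structural input (rather than torsion bookkeeping) is the preservation of ordinarity under isogeny, where I use that the $p^\infty$-torsion of an ordinary $G$ is abstractly $(\Q_p/\Z_p)^a$ and that quotienting this group by a finite subgroup returns the same group. The remaining subtlety is purely administrative: ensuring that the finitely generated field $K_1'$ simultaneously defines $H$ and $f$ and contains the image $f(G[p^\infty])$, which is automatic since this image lies in $G(K_0)\subseteq G(K_1')$. I expect no serious obstacle beyond these verifications.
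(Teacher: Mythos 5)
Your proposal is correct and follows essentially the same route as the paper's proof: pass to a finitely generated extension of $K_0$ over which $H$ and $f$ are defined, show that every point of $H[p^\infty]$ is the image under $f$ of a point of $G[p^\infty]\subseteq G(K_0)$ (the paper carries out this step with a Bezout identity, you with the primary decomposition of a torsion preimage --- the same computation), and then apply Corollary \ref{allptorsion}. Your explicit check that $H$ inherits ordinarity from $G$ (needed to invoke that corollary) is left implicit in the paper, so including it is a sound addition rather than a divergence.
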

\begin{proof} Let $K_2 <K_1$  be a finitely generated extension of $K_0$ over
 which $H$ and the isogeny $f$ from $G$ to $H$ are defined. We claim
 first 
 that any point of $p^\infty $-torsion in $H$ is the image of a point
 of $p^\infty$-torsion in $G$: indeed let $h \in H[p^\infty](\overline{K_2})$,
 i.e. for some $m$, $[p^m] h = 0$. Let $g \in G(\overline{K_2})$, be a
 preimage of $h$, $f(g) = h$. Then $[p^m] g  \in Ker f$. 
Let $n\ge 1$ be the order of the  finite
 group $(Ker f)(\overline{K_2})$. Then $n = p^r d$, where $d$ is
 prime to $p$. Write $1 = u d + v p^m$, $u,v \in \mathbb Z$. Then
 $g = [ud] g + [vp^m] g$, so $h=f(g) = f([ud]g)$, with $[p^{r+m}] [ud] g
 = 0$.\\
Now as $K_0$ is   algebraically closed, 
 $G[p^\infty](\overline{K_2}) = G[p^\infty](K_0 )$ and hence by the above claim $ H[p^\infty](\overline{K_2})
\subseteq f(G[p^{\infty}](\overline{K_2})) = f(G[p^{\infty}](K_0))\subseteq H[p^{\infty}](K_2)$.
We can now apply Corollary \ref{allptorsion}. 
\end{proof}

\begin{corollary}\vlabel{abeliandescent} Let $K_0 < K_1$ be an extension of fields with $K_0$ algebraically closed and
 let $0 \longrightarrow C \longrightarrow
  B\longrightarrow A\longrightarrow 0$, be an exact sequence of
 ordinary  abelian varieties over $K_1$, such that $A$ and $C$
 descend to $K_0$. Then $B$
 descends to $K_0$.    \end{corollary}
\begin{proof} By Poincar\'e reducibility theorem, $B$ is isogenous to $A\times C$, which descends to $K_0$, and we just have to
apply Corollary \ref{ordinaryisogenydescent}. \end{proof}

\begin{remark}See Remark \ref{supersingular} for a counterexample if one does not require the abelian varieties to be ordinary.
\end{remark}

We complete this section with some basic remarks about torsion in
$G(K)/G^{\sharp}$ $(=G(K)/p^{\infty}G(K))$ in characteristic $p$ which will immediately  enable us to
describe the link between the question of relative Morley rank  and that of preservation of exactness. 

\begin{lemma} \vlabel{torsionfree}  Let $G$ be a semiabelian variety over $K$.
\newline
(i) $G[p^{\infty}](K)$ is a direct sum of a divisible group and a finite group.
\newline 
(ii) $G(K)/p^{\infty}G(K)$ has finite torsion.
\newline 
(iii) If $G$ descends to ${\Kpinf}$ then $G(K)/p^{\infty}G(K)$ is torsion-free.
\newline
(iv) If $G(K)$ has trivial $p$-torsion then $G(K)/p^{\infty}G(K)$ is torsion-free.
\end{lemma}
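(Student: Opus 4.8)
The plan is to prove the four statements in order, exploiting the facts already established about $p^\infty$-torsion and the characterization of $G^\sharp$ as the maximal divisible subgroup of $G(K)$ (Lemma \ref{divisibility}(6)). For (i), I would recall from Fact \ref{torsion} that for each $n$ the full $p^n$-torsion $G[p^n]$ is finite, isomorphic to $(\Z/p^n\Z)^r$ for a fixed $r \le a$. The group $G[p^\infty](K)$ is an abelian $p$-group in which every element has finite order, and I would show it is a direct sum of copies of the Pr\"ufer group $\Z(p^\infty)$ and a finite group. The key point is that the rank is bounded: since $G[p^n](\overline K) \cong (\Z/p^n\Z)^r$, the subgroup $G[p^\infty](K)$ embeds in $(\Z(p^\infty))^r$. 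By the structure theory of abelian $p$-groups (subgroups of $(\Z(p^\infty))^r$), such a group is the direct sum of its maximal divisible subgroup (a finite sum of copies of $\Z(p^\infty)$) and a finite complement.

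\textbf{From torsion structure to the quotient.} For (ii), the torsion of $G(K)/G^\sharp$ decomposes into its prime-to-$p$ part and its $p$-part. The prime-to-$p$ torsion vanishes: by Lemma \ref{divisibility}(3), $G[p']$ is divisible and hence contained in $G^\sharp$, so $G(K)/G^\sharp$ has no prime-to-$p$ torsion at all. For the $p$-part, I would argue that an element $g + G^\sharp$ of order a power of $p$ means $[p^m]g \in G^\sharp$ for some $m$; since $G^\sharp$ is divisible, we can subtract an element of $G^\sharp$ to arrange $[p^m]g = 0$, i.e. $g \in G[p^m](K) \subseteq G[p^\infty](K)$. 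So the $p$-torsion of $G(K)/G^\sharp$ is the image of $G[p^\infty](K)$, and by (i) this image is the image of the \emph{finite} complement (the divisible part being absorbed into $G^\sharp$). Hence the torsion of $G(K)/G^\sharp$ is finite.

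\textbf{The two vanishing cases.} For (iii), if $G$ descends to $\Kpinf$ then by Lemma \ref{divweil2} we have $G^\sharp = G(\Kpinf)$, and by Proposition \ref{abelianseparabletorsion} (in the ordinary case) all the $p^\infty$-torsion is $K$-rational and in fact already $\Kpinf$-rational, so $G[p^\infty](K) = G[p^\infty]$ is divisible and thus contained in $G^\sharp$; combined with the prime-to-$p$ argument of (ii), this forces $G(K)/G^\sharp$ to be torsion-free. For (iv), if $G(K)$ has trivial $p$-torsion, then $G[p^\infty](K) = \{0\}$, so the finite complement in (i) is trivial and the argument of (ii) shows the $p$-part of the torsion of the quotient vanishes; together with the absence of prime-to-$p$ torsion, the quotient is torsion-free.

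\textbf{Main obstacle.} The step I expect to require the most care is part (i): correctly invoking the structure theory of bounded-corank abelian $p$-groups to split $G[p^\infty](K)$ as divisible-plus-finite, since one must verify that the divisible part is genuinely a direct summand (every divisible subgroup of an abelian group is a direct summand, which handles this cleanly) and that the complement is finite because the total $p$-rank is bounded by $r$. Parts (ii)--(iv) are then essentially bookkeeping, the only subtlety being the repeated use of divisibility of $G^\sharp$ to absorb torsion elements into $G^\sharp$ after clearing denominators.
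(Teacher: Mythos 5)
Your parts (i), (ii) and (iv) are correct and follow essentially the same route as the paper: for (i) you observe, exactly as the paper does, that $G[p^{\infty}](K)$ is a subgroup of $G[p^{\infty}]\cong(\Z_{p^{\infty}})^{r}$ and invoke the structure theory of abelian $p$-groups of finite socle; for (ii) you use the two ingredients of the paper's proof, namely divisibility of $G^{\sharp}$ to replace a coset of finite order by a genuine torsion point of $G(K)$ (if $ng\in G^{\sharp}$, pick $h\in G^{\sharp}$ with $nh=ng$ and pass to $g-h$), and the fact that the divisible part of $G[p^{\infty}](K)$ is absorbed into $G^{\sharp}$ while the complement is finite; and (iv) is the same argument with the complement trivial. (Your claim that the quotient has no prime-to-$p$ torsion ``at all'' needs the same clearing-of-denominators step, not just the inclusion $G[p']\subseteq G^{\sharp}$, but you acknowledge this at the end, so it is a presentational shortcut rather than an error.)

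Part (iii), however, has a genuine gap. You invoke Proposition \ref{abelianseparabletorsion} to conclude $G[p^{\infty}](K)=G[p^{\infty}]$, but that proposition is stated and proved (via the Verschiebung) only for \emph{ordinary} semiabelian varieties, whereas Lemma \ref{torsionfree}(iii) carries no ordinarity hypothesis --- you even flag the restriction yourself with the parenthesis ``(in the ordinary case)''. As written, your argument proves nothing for a non-ordinary $G$ that descends to $\Kpinf$. The fix is simpler than the route you chose, and is what the paper intends: after replacing $G$ by an isomorphic copy defined over $\Kpinf$, note that every torsion point of $G$ is algebraic over the field of definition, and $\Kpinf$ is algebraically closed, so \emph{all} torsion of $G(\overline K)$ --- of any order, with no ordinarity assumption --- lies in $G(\Kpinf)$; since $G^{\sharp}=G(\Kpinf)$ by Lemma \ref{othercharacterizations}(iii) (or Lemma \ref{divweil2}), the torsion of $G(K)$ is contained in $G^{\sharp}$, and the lifting argument of (ii) then yields that $G(K)/G^{\sharp}$ is torsion-free. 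This one-line principle, ``$G^{\sharp}$ contains all the torsion of $G(K)$,'' is exactly how the paper disposes of (iii) and (iv) simultaneously.
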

\begin{proof}
(i) $G[p^{\infty}](K)$ is a subgroup of $G[p^{\infty}](\overline K)$ which is a finite direct sum of copies of the Pr\"ufer group 
${\mathbb Z}_{p^{\infty}}$.
\newline 
As  $p^{\infty}G(K)$ is divisible, if $g\in G(K)$ and $ng\in p^{\infty}G(K)$
then there is $h\in p^{\infty}G(K)$ so 
that $ng = nh$ whereby $n(g-h) = 0$ so 
$g$ is congruent mod $p^{\infty}G(K)$ to an element of order $n$. We know that $p^{\infty}G(K)$ contains all the prime-to-$p$-torsion of $G$. On the other hand by (i) 
$G[p^{\infty}](K)/p^{\infty}G(K)$ is finite. This gives (ii) immediately.  
\newline
Similarly, for  cases (iii) and (iv), where $p^{\infty}G(K)$ contains all the torsion of $G(K)$. \end{proof}

\begin{proposition} \vlabel{Morleysemiabelian} 
  Suppose that $K$ is $\omega_1$-saturated and let  $G$ be a
  semiabelian variety over $K$, 
$0 \rightarrow T \rightarrow G
\rightarrow A \rightarrow 0$. Then the following are equivalent: 
\newline (i) $G^\sharp$  has relative
Morley rank \\
(ii) the sequence $0 \rightarrow T^{\sharp} \rightarrow G^\sharp \rightarrow A^\sharp \rightarrow 0$
is exact \\
(iii) $ (T(K) \cap G^{\sharp})/ T^{\sharp}$ is finite\\
(iv) $T(K) \cap G^{\sharp} $ is divisible. 
\end{proposition}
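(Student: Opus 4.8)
The plan is to prove the four-way equivalence by exploiting the exact sequence from Lemma \ref{sharpTate} together with the structural results already established, especially Lemma \ref{connectedcomponent} and Lemma \ref{torsionfree}. The torus $T$ is defined over $\C$ (by Fact \ref{subgroups}(i) it is $K$-split, hence even defined over the prime field), so by Fact \ref{caseswithRMR}(i) we have $T^{\sharp}=T(\C)$ with relative Morley rank equal to $\dim T$; this will be used freely. The cleanest route is the cycle (i)$\Rightarrow$(ii)$\Rightarrow$(iii)$\Rightarrow$(iv)$\Rightarrow$(i), with the analysis of the obstruction group $G^{\sharp}\cap T(K)/T^{\sharp}$ as the unifying object.

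First I would prove (iv)$\Rightarrow$(iii) and (iii)$\Rightarrow$(iv) together via Lemma \ref{connectedcomponent}: that lemma says $T^{\sharp}$ is the connected component of $G^{\sharp}\cap T(K)$, and its proof shows any \infdef subgroup of $G^{\sharp}\cap T(K)$ is connected iff divisible. Hence $G^{\sharp}\cap T(K)$ is divisible iff it equals its own connected component $T^{\sharp}$, i.e.\ iff the quotient is trivial; so (iv) is equivalent to the quotient being trivial, which is a priori stronger than (iii). To get the genuine equivalence (iii)$\Leftrightarrow$(iv) I would argue that the quotient $G^{\sharp}\cap T(K)/T^{\sharp}$ is a profinite group (Remark \ref{connectedcomponent} following, via $G/G^{0}$ profinite), and simultaneously a quotient of the finitely-generated-type object arising from Tate modules; combined with Lemma \ref{torsionfree}(iii) (since $T$ descends to $\Kpinf$, $T(K)/T^{\sharp}$ is torsion-free) one sees the quotient is torsion-free profinite, so finiteness forces triviality. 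This is where I expect the main subtlety.

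Next, (ii)$\Leftrightarrow$(iii): by the discussion preceding Lemma \ref{connectedcomponent}, exactness of $0\to T^{\sharp}\to G^{\sharp}\to A^{\sharp}\to 0$ is equivalent to $T^{\sharp}=T(K)\cap G^{\sharp}$, i.e.\ to the obstruction group being trivial. Given the preceding paragraph identifying triviality with (iv) and hence with (iii), this direction is essentially bookkeeping: (ii) says the quotient is trivial, which certainly gives (iii); and (iii) plus the torsion-free-profinite analysis gives triviality, hence (ii).

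Finally the genuinely rank-theoretic directions. For (i)$\Rightarrow$(iii): if $G^{\sharp}$ has relative Morley rank, then by Remark \ref{connectedcomponent} (the remark following the lemma) the profinite quotient $G^{\sharp}\cap T(K)/T^{\sharp}$ must be finite, which is exactly (iii). For (iv)$\Rightarrow$(i): assuming (iv), the equivalences give that the sequence $0\to T^{\sharp}\to G^{\sharp}\to A^{\sharp}\to 0$ is exact, so by Lemma \ref{surjectivity} applied to $G\to A$ the map $G^{\sharp}\to A^{\sharp}$ is surjective with kernel $T^{\sharp}=T(\C)$. Now $A$ is an abelian variety, so $A^{\sharp}$ has finite relative Morley rank by Fact \ref{caseswithRMR}(ii), and $T^{\sharp}=T(\C)$ has finite relative Morley rank by Fact \ref{caseswithRMR}(i). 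Applying Corollary \ref{RMRexact}(i) to the exact sequence then yields that $G^{\sharp}$ has (finite) relative Morley rank, giving (i). The hard part will be the torsion-free-profinite argument linking (iii) and (iv); everything else is an assembly of the already-established functorial and connectedness facts.
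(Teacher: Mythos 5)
Your proof is correct and follows essentially the same route as the paper: torsion-freeness of $T(K)/T^{\sharp}$ (Lemma \ref{torsionfree}), the connected-component characterization of $T^{\sharp}$ inside $G^{\sharp}\cap T(K)$ (Lemma \ref{connectedcomponent}), and the obstruction-group reformulation give the equivalence of (ii), (iii), (iv), while Fact \ref{caseswithRMR} and Corollary \ref{RMRexact} yield the rank directions. The only remark worth making is that the step you flag as the main subtlety requires no profinite or Tate-module input at all: $(G^{\sharp}\cap T(K))/T^{\sharp}$ embeds in the torsion-free group $T(K)/T^{\sharp}$, and a finite torsion-free group is trivial, which is exactly how the paper disposes of it.
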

\begin{proof} 
By the previous lemma, as $T$ has no $p$-torsion,
$T(K)/T^{\sharp}$ is torsion free.  Also note that $T^{\sharp} = T({\cal C})$ is divisible and is the connected component of $T(K) \cap G^{\sharp}$ (\ref{connectedcomponent}).  Hence $(T(K) \cap G^{\sharp})
/T^{\sharp}$ is finite iff it is trivial iff the sequence  
 $0 \rightarrow T^{\sharp} \rightarrow G^\sharp \rightarrow A^\sharp \rightarrow 0$
is exact. And moreover these conditions are equivalent to the divisibility of $T(K) \cap G^{\sharp}$.
This gives the equivalence of (ii), (iii), and (iv). 
\newline
On the other hand if $G^{\sharp}$ has finite relative Morley rank, then every relatively definable subgroup is connected by finite, so (i) implies (iii).  
Conversely, we have seen (\ref{caseswithRMR}) that both $T^{\sharp}$ and $A^{\sharp}$ have
relative Morley rank. By \ref{RMRexact}, the exactness of the sequence
implies that  $G^{\sharp}$ also has relative Morley rank. Thus (ii) implies (i).
\end{proof}

\subsection{Exactness and descent}
\vlabel{exactnessdescent}

\noindent 

We now assume that $K$ is $\omega_1$-saturated.

\begin{proposition}\label{tate}
Let $0\longrightarrow G_1 \stackrel{h}\longrightarrow G_2\stackrel{f}
\longrightarrow G_3
\longrightarrow 0$  be an exact sequence of semiabelian varieties over
$K$. Let $f^{\omega}$ be the induced
morphism of ${\mathbb Z}_p$-modules from $T_pG_2(K)$ to
$T_pG_3(K)$. Then there is an isomorphism $$
\phi : (h(G_1(K)) \cap G_2^\sharp) /h(G_1^\sharp)
\widetilde{\longrightarrow} T_pG_3(K)/f^{\omega}(T_pG_2(K)). $$
\end{proposition}
\begin{proof} Note that we may assume that $h$ is the
  inclusion. We first define $\phi$:\\
Let $g$ be in $G_1(K)\cap p^{\infty}G_2(K)$. There exists an element $(g_i)_{i\in \N}$ in $T_p(G_2(K),g)$ (the fiber of $\tilde G_2(K)$ over $g$), with $g_0=g\in G_1(K)$. Hence $f^{\omega}((g_i))\in T_pG_3(K)$. We check that it gives a well-defined map (even a group homomorphism) from $G_1(K)\cap p^{\infty}G_2(K)$ to $T_pG_3(K)/f^{\omega}(T_pG_2(K))$: if $(g'_i)_{i \in N}$
is another element in $T_p(G_2(K),g)$, then $(g_i)-(g'_i)\in T_pG_2(K)$ hence $f^{\omega}((g_i))-f^{\omega}((g'_i))\in f^{\omega}(T_pG_2(K))$. Let us prove now that the kernel of this map is $p^{\infty}G_1(K)$: if $g\in p^{\infty}G_1(K)$, we can choose $(g_i)\in T_p(G_1(K),g)$, which is sent to $0$ by $f^{\omega}$. Conversely, assume
that for some $(g_i)\in T_p(G_2(K),g)$ and some $(h_i)\in T_pG_2(K)$, $f^{\omega}((g_i))=f^{\omega}((h_i))$. Then $(g_i-h_i)\in T_p(G_1(K),g)$, which gives that
$g\in p^{\infty}G_1(K)$.\\
Hence we have obtained an embedding $\phi: (G_1(K) \cap p^{\infty}G_2(K))/p^{\infty}G_1(K) \hookrightarrow T_pG_3(K)/f^{\omega}(T_pG_2(K))$. It remains to prove that it is surjective. For $(h_i)_{i\in \N}\in T_pG_3(K)$, 
we can realize in $K$ (which is $\omega_1$-saturated) the following type of length $\omega$ over $K_0((h_i))$ ($K_0$ is a countable subfield of definition):
$$
\bigwedge_{i\in \N} (x_i\in G_2 \wedge f(x_i)=h_i \wedge x_i=[p]x_{i+1}).
$$
(It can be realized for $i\le n$ by choosing some $g_{n+1}\in G_2(K)$ such that $f(g_{n+1})=h_{n+1}$, and then defining $g_i=[p^{n+1-i}]g_{n+1}$.)
For a realisation $(g_i)_{i\in \N}$ of this type, we have $g_0\in G_1(K)$ (since $f(g_0)=h_0=0$), $(g_i)\in T_p(G_2(K),g_0)$, hence $g_0\in p^{\infty}G_2(K)$ and
$f^{\omega}((g_i))=(h_i)$.\end{proof}

\begin{remark} \vlabel{Tpsurj}
It follows in particular that the sequence $0 \to G_1^{\sharp} \to G_2^{\sharp} \to G_3^{\sharp} \to 0$ is exact if and only if
the map $f^{\omega} : T_pG_2(K) \to T_p G_3(K)$ is surjective.
\end{remark}

\begin{proposition}\vlabel{Maindescentcarp}
Let 
$0\rightarrow G_1 \rightarrow G_2 \rightarrow G_3\rightarrow 0$ be an
  exact sequence of ordinary  semiabelian varieties over $ K $. Suppose
  that $G_1$ and $G_3$ descend to the constants of $K$.\\
Then, the sequence $0 \to G_1^{\sharp} \to G_2^{\sharp} \to G_3^{\sharp} \to 0$
remains  exact if and only if  $G_2$ also descends 
  to the constants.  
\end{proposition}
\begin{proof} Here again we may assume that the map $G_1\to G_2$ is the inclusion.
First note that the $\sharp$ sequence is exact if and only if $G_1(K) \cap G_2^{\sharp}=G_1^{\sharp}$ by Lemma \ref{surjectivity}.\\
Let $K_0$ be a countable elementary submodel of $K$ over which
everything is defined. By isomorphism, we can suppose that both $G_1$
and $G_3$ are actually defined over ${\cal C}\cap K_0$, the field of constants
of $K_0$ (precisely $G_i=(G'_i)_K$ for some $G'_i$ over ${\cal C} \cap K_0$, $i=1,3$).\\ 
If $G_2$ descends to the constants, then by isomorphism, we can suppose
that $G_2=(G'_2)_K$ for some $G'_2$ over the constants, so for every $i$
${G_i}^\sharp = G'_i ( {\cal C})$. And  then $G_1(K) \cap G_2^\sharp =
G'_1(K) \cap G'_2 ({\cal C}) = G'_1 ({\cal C}) = G_1^\sharp$. 

\noindent For the converse, suppose that $0\rightarrow
{G_1}^\sharp  \rightarrow {G_2}^\sharp  \rightarrow {G_3}^\sharp
\rightarrow 0$ is exact.\\  
Our assumption that the $G_i$'s are ordinary
ensures that for each $i$, $T_p G_i (\overline K) \cong {{\mathbb Z}_p}^{a_i}$,
where $a_i$ is the dimension of the abelian part of $G_i$. If $G_1$
and $G_3$ descend to ${\cal C}$, then $T_p G_1 (K) = T_p G_1 ({\cal C})=T_p G_1
(\overline K ) $ and
$T_p G_3 (K) =T_p G_3 ({\cal C})=  T_p G_3 (\overline K )$. 
By Remark \ref{Tpsurj}, the sequence  $$0\longrightarrow T_p {G_1}(K)
\longrightarrow T_p G_2 (K)  \longrightarrow
T_p G_3 (K) \longrightarrow 0$$ is exact. It follows that $T_p G_2 (K)
\cong {{\mathbb Z}_p}^{a_1 + a_3}$. As $a_1 + a_3 = a_2$ (by exactness
of $0\longrightarrow G_1 \longrightarrow G_2 \longrightarrow
G_3\longrightarrow 0$), and as $T_p G_2 (K)$ is a direct factor
submodule of $T_p G_2(\overline K)$, it follows that $T_p G_2 (K) = T_p G_2
(\overline K)$, and by Proposition \ref{abelianseparabletorsion}, that
$G_2$ descends to  the constants.\end{proof}

\begin{corollary}\vlabel{nonexactexamplecarp} 
For any ordinary 
abelian variety $A$ defined over the
  constants of $K$,  there exists an exact sequence over $K$, 
$$  0 \longrightarrow {\mathbb G}_m \longrightarrow H \longrightarrow A_K \longrightarrow 0 $$  such that 
$$  0 \longrightarrow {\mathbb G}_m^{\sharp} \longrightarrow H^{\sharp} \longrightarrow (A_K)^{\sharp} \longrightarrow 0$$
is not exact. 
\end{corollary}
\begin{proof} We use the fact that  $EXT(A, {\mathbb G}_m)$ is parametrized
  (up to isomorphism) by the dual abelian variety of $A$, say $\hat A$,
  which is also over the constants, as in Proposition \ref{modulispace}. Then $H$ will descend to the
  constants $\cal C$ of $K$ if and only if $H$ corresponds to a $\cal C$-rational
  point of $\hat A$. So just pick some $K$-rational point of $\hat A$
  which is not $\cal C$-rational.\end{proof}

We have established in Proposition \ref{Morleysemiabelian} the connection between
exactness and relative Morley rank, and we can conclude that: 

\begin{corollary} \vlabel{counterexamplecarp} There
  is an ordinary 
  semiabelian variety $G$, such that $G^{\sharp}$ does not have relative
  Morley rank.\end{corollary}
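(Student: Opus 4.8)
The plan is to combine Proposition~\ref{Morleysemiabelian} with the example produced in Corollary~\ref{nonexactexample}. Recall that Corollary~\ref{counterexample} asserts the existence of an ordinary semiabelian variety $G$ whose $G^\sharp$ does not have relative Morley rank; the work has essentially all been done, and the proof should be a short deduction. First I would invoke Corollary~\ref{nonexactexample}: fixing any ordinary abelian variety $A$ defined over the constants $\C$ of $K$ (for instance an ordinary elliptic curve over $\C$), that corollary furnishes an exact sequence
$$ 0 \longrightarrow {\mathbb G}_m \longrightarrow G \longrightarrow A \longrightarrow 0 $$
over $K$ with ${{\mathbb G}_m}^\sharp \neq G^\sharp \cap {\mathbb G}_m$. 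Here I set $G$ to be the semiabelian variety $H$ produced there; note that $G$ is ordinary, since its toric part is ${\mathbb G}_m$ and its abelian part $A$ is ordinary, so $r = a = \dim A$.

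Next I would apply Proposition~\ref{Morleysemiabelian} to this sequence, identifying the torus $T$ with ${\mathbb G}_m$. That proposition lists four equivalent conditions, among them that $G^\sharp$ has relative Morley rank (condition (i)) and that the sequence $0 \to T^\sharp \to G^\sharp \to A^\sharp \to 0$ is exact (condition (ii)). By construction the latter fails: we have ${\mathbb G}_m^\sharp = T^\sharp \subsetneq G^\sharp \cap {\mathbb G}_m = G^\sharp \cap T(K)$, which is precisely the negation of the exactness condition (equivalently, condition (iii) fails, as $G^\sharp \cap T(K)/T^\sharp$ is nontrivial and hence, by the torsion-freeness of $T(K)/T^\sharp$ noted in Lemma~\ref{torsionfree}, infinite). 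Therefore the equivalent condition (i) must also fail, i.e. $G^\sharp$ does not have relative Morley rank.

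The argument really is just a chaining of earlier results, so there is no genuine obstacle to overcome at this stage; the substance lies entirely in the prior propositions. If I wanted to make the statement fully self-contained I would add the observation promised in the introduction, namely that $G$ may be taken to be \emph{any} nonconstant extension of the constant ordinary abelian variety $A$ by ${\mathbb G}_m$—the point being that such an extension fails to descend to $\C$ exactly when it corresponds to a non-$\C$-rational point of $\hat A$, as in the proof of Corollary~\ref{nonexactexample}, and that by Proposition~\ref{Maindescent} the $\sharp$-sequence is then non-exact. The one place to be slightly careful is to confirm that the relevant hypotheses of Proposition~\ref{Morleysemiabelian} (that $K$ is $\omega_1$-saturated and that $G$ is a genuine semiabelian variety in the stated short exact sequence with separable maps) are met, but these are guaranteed by the standing assumptions of the section and by the construction of $G$.
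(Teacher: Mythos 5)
Your proposal is correct and follows essentially the same route as the paper: the authors likewise obtain the corollary by combining Corollary~\ref{nonexactexample} (a nonconstant extension $0\to{\mathbb G}_m\to G\to A\to 0$ of a constant ordinary abelian variety, for which the $\sharp$-sequence is not exact) with the equivalences of Proposition~\ref{Morleysemiabelian}. Your supplementary checks (ordinarity of $G$ via the vanishing of $p$-torsion in ${\mathbb G}_m$, the $\omega_1$-saturation hypothesis, and the torsion-freeness argument from Lemma~\ref{torsionfree}) are all sound and consistent with the paper's intended deduction.
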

 
In fact,  as above, for any ordinary  abelian variety $A$ defined over
$K^{p^\infty}$, there is some semiabelian variety $G$ in
$EXT(A,\mathbb{G}_m)$ such that $G^\sharp$ does not have relative Morley
rank.

\medskip

We will finish this section with some direct corollaries of Proposition \ref{tate}. Again,
$0\longrightarrow G_1 \longrightarrow G_2 \stackrel{f}\longrightarrow 
G_3\longrightarrow 0$ is an  exact sequence of  semiabelian varieties
over $ K$, with $G_1 \to G_2$ the inclusion map. Recall from Proposition \ref{tate} that
$(G_1(K) \cap G_2^{\sharp}) /G_1^\sharp\, \cong \, T_pG_3 (K) / f^{\omega} (T_p G_2 (K))$.   

\begin{corollary}\vlabel{finitetorsionin$G_3$carp} If
  $G_3[p^\infty](K)$ is finite, then the $\sharp$ sequence  is exact.
\end{corollary}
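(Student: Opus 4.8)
The plan is to read off exactness from the ``furthermore'' clause of Proposition \ref{exactnessofU}. In the characteristic $p$ notation, where $(U_{G_i})^\partial = T_p G_i(K)$, that clause identifies the obstruction to exactness,
$$
G_1(K)\cap G_2^\sharp / G_1^\sharp \;\cong\; T_pG_3(K)/f\bigl(T_pG_2(K)\bigr).
$$
So the sequence of $\sharp$'s is exact exactly when $f$ carries $T_pG_2(K)$ onto $T_pG_3(K)$, and in particular it suffices to prove that the hypothesis forces $T_pG_3(K)=0$: then the cokernel on the right is trivially $0$, and Proposition \ref{exactnessofU} delivers exactness of $0\to G_1^\sharp\to G_2^\sharp\to G_3^\sharp\to 0$.

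To show $T_pG_3(K)=0$ I would simply unwind Definition \ref{Gtilde}. An element of $T_pG_3(K)$ is a sequence $(x_i)_{i\in\N}$ with $x_i\in G_3(K)$, $x_0=0$ and $x_i=[p]\,x_{i+1}$ for all $i$. Iterating the recursion gives $x_i=[p^k]\,x_{i+k}$ for every $k\ge 0$, and since $[p^i]x_i=x_0=0$ each coordinate lies in $G_3[p^\infty](K)$. Now the hypothesis enters: a finite group has bounded exponent, so there is some $M$ with $[p^M]$ annihilating $G_3[p^\infty](K)$ (recall from Lemma \ref{torsionfree}(i) that finiteness of $G_3[p^\infty](K)$ really does mean its divisible part is trivial, so this is a genuine bound and not merely ``no $\Z_{p^\infty}$-part''). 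Applying the recursion with $k=M$ gives $x_i=[p^M]\,x_{i+M}=0$ for every $i$, because $x_{i+M}\in G_3[p^\infty](K)$. Hence every element of $T_pG_3(K)$ is the zero sequence, i.e. $T_pG_3(K)=0$, as wanted.

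I do not anticipate a real obstacle here; the one point deserving care is the bookkeeping of the Tate-module recursion together with the passage from ``finite'' to ``bounded exponent'', after which the vanishing of $T_pG_3(K)$ is immediate and everything else is a direct appeal to Proposition \ref{exactnessofU}. If one prefers a self-contained phrasing, one can instead note directly that $f\colon T_pG_2(K)\to T_pG_3(K)$ is surjective onto the zero group and so is vacuously onto, but the cokernel computation is the cleanest route and keeps the argument uniform with the rest of the section.
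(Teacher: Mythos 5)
Your proposal is correct and follows exactly the paper's own route: the paper's proof is the one-line observation that finiteness of $G_3[p^\infty](K)$ forces $T_pG_3(K)=0$, after which exactness falls out of the cokernel identification in Proposition \ref{exactnessofU}. You have merely filled in the (correct) bookkeeping of the Tate-module recursion and the bounded-exponent argument that the paper leaves implicit.
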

\begin{proof} Since $G_3[p^{\infty}](K)$ is finite, $T_pG_3(K)=0$.\end{proof}

\medskip




If we add the assumption that the  semiabelian varieties have relative
Morley  rank, we  get the following characterization: 


\begin{proposition}\vlabel{torsioninkernel} 
 Let $0 \longrightarrow G_1 \longrightarrow G_2 \longrightarrow G_3 \longrightarrow 0$ be
  an exact sequence of semiabelian varieties over $K$ such that $G_2^\sharp$ 
 has relative Morley rank. Then the following are equivalent

(1) the sequence $0 \longrightarrow {G_1}^{\sharp} \longrightarrow {G_2}^\sharp \longrightarrow {G_3}^\sharp \longrightarrow 0$
is exact 

(2) $ G_1 [p^\infty ] (K) \cap {G_2}^\sharp \, = \,  G_1 [p^\infty
    ] (K) \cap   {G_1}^\sharp$. 


\noindent In particular the $\sharp$ sequence will be exact when $G_1$
descends to the constants, or, more generally, when 
$ G_1 [p^\infty ](\overline K ) = G_1[p^\infty ]    (K)$,  and also when
 $G_1[p^\infty ] (K) = {0}$. \end{proposition}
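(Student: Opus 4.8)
The plan is to reduce everything to the obstruction group $Q := (G_1(K)\cap G_2^\sharp)/G_1^\sharp$, which by the discussion preceding Lemma \ref{connectedcomponent} (together with Lemma \ref{surjectivity}) measures the failure of exactness: condition (1) holds exactly when $Q = 0$. By Lemma \ref{connectedcomponent}, $G_1^\sharp$ is the connected component of the relatively definable subgroup $G_1(K)\cap G_2^\sharp$ of $G_2^\sharp$, and here the hypothesis that $G_2^\sharp$ has relative Morley rank does the decisive work: that rank is inherited by the relatively definable subgroup $G_1(K)\cap G_2^\sharp$, so by the theory of such groups its connected component $G_1^\sharp$ has finite index, whence $Q$ is \emph{finite}. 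This finiteness is precisely what is absent in the merely profinite general case, and it is what lets condition (2) be read off the $p^\infty$-torsion.

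The second step is to exhibit a natural isomorphism $Q \cong (G_1[p^\infty](K)\cap G_2^\sharp)/(G_1[p^\infty](K)\cap G_1^\sharp)$, after which the equivalence (1) $\Leftrightarrow$ (2) is immediate, both conditions asserting that this quotient vanishes. First I would observe that every class of $Q$ has $p$-power order: since $G_1^\sharp$ contains the prime-to-$p$ torsion of $G_1$ and is $n$-divisible for $n$ prime to $p$ (Lemmas \ref{othercharacterizations} and \ref{divisibility}), the quotient $G_1(K)/G_1^\sharp$ has no prime-to-$p$ torsion, so every element of the finite group $Q$ is $p$-power torsion. Next I would use $p$-divisibility of $G_1^\sharp$ to lift such a class to a torsion element: given $g\in G_1(K)\cap G_2^\sharp$ with $p^k g\in G_1^\sharp$, pick $h\in G_1^\sharp$ with $p^k h = p^k g$; then $g-h$ lies in $G_1[p^k](K)\cap G_2^\sharp$ (using $G_1^\sharp\subseteq G_2^\sharp$) and represents the same class of $Q$. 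This shows the evident map $G_1[p^\infty](K)\cap G_2^\sharp \to Q$ is surjective, while its kernel is plainly $G_1[p^\infty](K)\cap G_1^\sharp$, which yields the isomorphism.

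For the three ``in particular'' clauses I would check directly that each hypothesis forces condition (2), that is, forces $G_1[p^\infty](K)\cap G_2^\sharp\subseteq G_1^\sharp$. If $G_1$ descends to the constants, Lemma \ref{torsionfree}(iii) gives that $G_1(K)/G_1^\sharp$ is torsion-free, so any torsion element of $G_1(K)\cap G_2^\sharp$ already lies in $G_1^\sharp$. If $G_1[p^\infty](\overline K)=G_1[p^\infty](K)$, then by Fact \ref{torsion} this group is isomorphic to $({\mathbb Z}_{p^{\infty}})^r$, hence divisible, hence contained in the maximal divisible subgroup $G_1^\sharp$ (Lemma \ref{divisibility}), so (2) holds at once. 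The case $G_1[p^\infty](K)=\{0\}$ is trivial, both sides of (2) reducing to $\{0\}$.

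The main obstacle I anticipate is the surjectivity in the isomorphism of the second step: the divisibility-lifting trick produces a $p^\infty$-torsion representative only once we know each class of $Q$ has \emph{finite} $p$-power order, and this is exactly where the relative Morley rank hypothesis, through the finiteness of $Q$, is indispensable. Without it $Q$ could be an infinite pro-$p$ group such as $\Z_p$, with elements of infinite order admitting no torsion representative, and the clean description of the obstruction in terms of the $p^\infty$-torsion of $G_1$ would fail.
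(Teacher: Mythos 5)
Your proposal is correct and follows essentially the same route as the paper's: both reduce to the obstruction group $Q=(G_1(K)\cap G_2^{\sharp})/G_1^{\sharp}$, use the relative Morley rank hypothesis (via Lemma \ref{connectedcomponent} and the finite-index connected component) to conclude $Q$ is finite, and then use divisibility of $G_1^{\sharp}$ together with the fact that it contains all prime-to-$p$ torsion to reduce the obstruction to $p^{\infty}$-torsion, handling the ``in particular'' clauses the same way. The only difference is packaging: the paper shows condition (2) forces $Q$ to be torsion-free (hence trivial, being finite), whereas you organize the very same divisibility computation into an explicit isomorphism $Q \cong (G_1[p^{\infty}](K)\cap G_2^{\sharp})/(G_1[p^{\infty}](K)\cap G_1^{\sharp})$, which yields both implications at once.
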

\begin{proof} Recall that ${G_i}^\sharp = p^\infty G_i (K)$. 
  We know that (1)  holds if and only if ${G_1}^\sharp = G_1(K) \cap {G_2}^\sharp$. So trivially, (1)  implies (2). We know that ${G_1}^\sharp$
contains all the $p'$-torsion of $G_1(K)$. It follows that if (2) holds, then 
$(G_1(K) \cap {G_2}^\sharp)/ {G_1}^\sharp $ is torsion free. 
As by assumption ${G_2}^\sharp $ has relative Morley rank, this  quotient
must be finite, if it is torsion free, it is trivial. 

If $ G_1 [p^\infty ](\overline K ) = G_1[p^\infty ]    (K)$ then
$G_1[p^\infty ]    (K) \subset G_1^\sharp$ and the conclusion holds. 
\end{proof}

\subsection{Further examples} \vlabel{further}

We will see in section  \ref{unifstat} that  in characteristic $0$, the $\sharp$ functor
  preserves exact  sequences of
  abelian varieties. This is not the case in characteristic $p$, even for
  ordinary abelian varieties.

 
The examples  of non exactness for abelian varieties will have 
to be quite different from the examples
seen in the previous section for semiabelian varieties, as can be seen from the following direct
corollary of Proposition \ref{torsioninkernel}. Recall from Fact
\ref{caseswithRMR} that for all abelian varieties $A$, $A^\sharp$ has
finite relative Morley rank. 

\begin{corollary}\vlabel{noptorsionabelian} We assume that $K$ is $\omega_1$-saturated. 
Let $0 \longrightarrow C \longrightarrow
  B\longrightarrow  A\longrightarrow 0$, be an exact sequence of
  abelian varieties over $K$. If  $C(K)$ has no $p$-torsion, or if $C$
  descends to the constants, then 
the sequence $0 \longrightarrow C^\sharp \longrightarrow
  B^\sharp \longrightarrow  A^\sharp \longrightarrow 0$ is exact.\end{corollary}

\begin{remark}\vlabel{counterexampletoMaindescent} From Corollary
  \ref{noptorsionabelian} and the example given in Remark \ref{supersingular}, we see that 
  Proposition \ref{Maindescentcarp} does not hold for non ordinary
  (semi)abelian varieties. 
\end{remark}


\medskip 

There are still cases, not covered by Corollary \ref{noptorsionabelian},
where one obtains non exactness, even in the ordinary case:

\begin{proposition}\vlabel{nonexactabelian}There is an exact sequence
  of (ordinary) abelian varieties such that the induced  $\sharp $ sequence is not exact.\end{proposition} 
\begin{proof}  Let $A$ be an ordinary elliptic curve, defined over
  $K^p$, which does not descend to $K^{p^\infty}$  and $C$ an ordinary
  elliptic curve defined over $K^{p^\infty}$.
Then we know by Proposition \ref{abelianseparabletorsion}  that 
$A [p](K)  \cong \Z/p\Z \cong C[p](K)$ but $A[p^\infty ](K)$ is finite. Pick an isomorphism $f$ between $A [p](K)$
  and $C[p](K)$.\\
Let $H \subset A[p](K) \times C[p](K)  := \{ (a, - f(a)) \, : \, a \in A|p][K)\}$, and $B := (A_K
  \times C_K) / H$. 
Then $A_K$ is isomorphic to $A_1 := (A_K \times {0} + H) / H \subset B$. 
Consider the exact sequence: 
$$ 0 \longrightarrow A_1 \longrightarrow B \stackrel{g}\longrightarrow  B/A_1
\longrightarrow 0.$$
Note that $C_1 := B/A_1$ is isogenous to $C_K$, hence by
\ref{ellipticdescent} or \ref{ordinaryisogenydescent}, descends to
$K^{p^\infty}$.\\
One can check that the $p^\infty$ sequence is no longer exact, more precisely, that
$p^\infty A_1 (K) \not= A_1(K) \cap p^\infty B(K) $. \\
Furthermore, if $K$ is $\omega_1$-saturated, by applying Proposition \ref{tate}, one sees
that $(A_1(K) \cap p^{\infty} B(K))/p^{\infty}A_1(K)$ is isomorphic to ${\mathbb Z} / p {\mathbb Z}$. \end{proof}










\medskip

\begin{remark}\vlabel{supersingular} 
The following example illustrates the necessity of the separable hypothesis 
in Facts \ref{easyseparableisogeny} and \ref{hardseparableisogeny}, and of the ordinary hypothesis
in Corollary \ref{abeliandescent} and Proposition \ref{Maindescentcarp}.\\
Let $E$ be a supersingular elliptic curve over $K$ ($\omega_1$-saturated), necessarily descending to $\overline \Fp$.
 For any
  abelian  variety $A$ there is a one-one correspondence  between (isomorphism classes
  of) 
purely inseparable  isogenies and sub $p$-Lie algebras of Lie $A$ (see \cite{Serre} or \cite{Mumford}).
 It follows that there is an abelian
 variety $A$ over $K$, isogenous to  $E\times E$, which cannot be isomorphic to 
any abelian variety defined over  $\overline {\mathbb
   F_p}$.\\
Furthermore, for such an $A$, it is easily seen that $A$ lies in $EXT(E_1,E_2)$ for some elliptic curves
$E_1$ and $E_2$ descending to $\overline \Fp$, and in this case $0 \to E_1^{\sharp} \to A^{\sharp} \to E_2^{\sharp} \to 0$
is exact by Corollary \ref{noptorsionabelian}.\\
Thanks to
  A. Chambert-Loir and L. Moret-Bailly for pointing out these arguments to us.
\end{remark}

\medskip

We finish this section with a summary in the case of semiabelian varieties over $K$ ($\omega_1$-saturated) 
$$0 \longrightarrow T \longrightarrow G \longrightarrow E
\longrightarrow 0,$$
with $E$ an elliptic curve.








\begin{proposition} Let $G$ be as above:\\
(i) If $E$ is supersingular, then 
the $\sharp$ sequence remains exact and $G^{\sharp}$ has relative Morley rank.\\ 
(ii) If $E$ is ordinary   and does not descend to the constants
then 
the $\sharp$ sequence remains exact and $G^{\sharp}$ has relative Morley  rank .\\
(iii) If $E$ is ordinary and descends to the constants,  the following
are equivalent 

-- the $\sharp$ sequence  is  exact

-- $G$ descends to the constants 

-- $G^{\sharp}$ has relative Morley rank 

-- $G[p^\infty](K)$ is infinite. 
\smallskip 

\noindent In the case when $G$ does not descend to the constants, then $(T(K) \cap G^\sharp ) / T^\sharp $ is isomorphic to the profinite group $\Z_p$.  
\end{proposition}
\begin{proof} Recall first that Proposition \ref{Morleysemiabelian} says that in the present context
   $G^\sharp$ has relative Morley rank if and only if the $\sharp$ sequence
  is exact. \\
(i) If  $E$ is supersingular, it has no $p$-torsion and   Corollary
\ref{finitetorsionin$G_3$carp} applies.\\
(ii) If $E$ does not descend to the constants,  Corollary
  \ref{finitetorsionin$G_3$carp} applies.\\
(iii) If $E$ is ordinary and descends to $K^{p^\infty}$, by Proposition \ref{Maindescentcarp}, the
$\sharp$ sequence  will be exact if and only if $G$ descends to
$K^{p^\infty}$. As $T$ has no $p$-torsion, $G[p^\infty](\overline K) \cong
  E[p^\infty] (\overline K)  \cong \Z_{p^\infty}$. 
So if $G$ descends to the constants, then $G[p^\infty] (K)
  = G[p^\infty](\overline K)$ so is infinite. \\
If  $G$ does not  descend to $K^{p^\infty}$, using Proposition
  \ref{abelianseparabletorsion}, there is some $n\ge 1$ such that $G[p^\infty](K)=G[p^n](K)$, hence finite.

 In particular, in this case, 
  $T_p G(K) = \{0\}$. By Proposition \ref{tate}, 
$ (T(K) \cap G^\sharp ) / T^\sharp $ is isomorphic to $ T_p E(K)/ \tilde f^{\omega} (T_pG(K))
  \cong T_p E(K) \cong \Z_p$, completing the proof of (iii).  
 \end{proof}

\section{Uniform results in all characteristics}

\vlabel{sectionuniform}

In order to prove the analogues of Proposition \ref{Maindescentcarp} and
Corollary \ref{nonexactexamplecarp} in characteristic $0$, we need to use more differential
  algebraic methods, and in particular D-structures.  But in fact the elementary proofs we gave in the
  previous section for the characteristic $p$ case can also be seen as
  involving D-structures and being  similar to the characteristic $0$
  case. This was just ``hidden'' by the fact that the objects manipulated
  have, in characteristic $p$,  an easy algebraic description. 
We believe it is interesting though
  to explain precisely this uniformity and in order to do this we will
  need to introduce D-structures on group schemes.


But before we launch into this slightly dry exposition, let us point out
that most of the ``uniform'' results can in fact be read at the  ``analogy''
level, without actually understanding the $D$-structure in the
characteristic $p$ case. \\
This will be briefly  explained at the  beginning of Section \ref{Gtildecharp}.

For the whole of this section, $(K,\partial)$ will be a model of $DCF_0$ or $CHF_{p,1}$, where in the latter case we assume $\omega_1$-saturation.

\subsection{D-structures and descent}\vlabel{sectionDstructures}

A good exposition of notions presented here can be found in
\cite{KowalskiPillay2}, one can also look at \cite{Benoist3}. 

\begin{definition}
\begin{enumerate}
\item
An (iterative) Hasse D-structure on a scheme $X$ over $K$ is an (iterative) Hasse derivation $\partial$ on the structure sheaf $\mathcal O_X$, which means that for each open subset $U\subset X$, we have an (iterative)
Hasse derivation $\partial_U:\mathcal O_X(U) \to \mathcal O_X(U)$, such that the structure homomorphism $K\to \mathcal O_X(U)$ and the restriction homomorphisms $\mathcal O_X(U') \to \mathcal O_X(U)$ preserve the Hasse derivations.
\item A morphism of schemes with (iterative) D-structure $(X,\partial_X) \to (Y,\partial_Y)$ is a morphism of schemes $X\to Y$ such that the corresponding morphism of sheaves preserves the Hasse derivations.
\item In particular, for $(R,\partial)$ an (iterative) Hasse differential algebra over $(K,\partial)$, $(\text{Spec}(R),\partial)$ is a scheme with an (iterative) D-structure, and a D-point of $(X,\partial_X)$ with value in $R$ is by definition a morphism
of schemes with (iterative) D-structure $(\text{Spec}(R),\partial) \to (X,\partial_X)$. We denote this set of D-points by $(X,\partial_X)^{\partial}(R)$.
\item If $(X,\partial_X)$ is a scheme with an (iterative) D-structure and $Y$ a closed subscheme of $X$, we say that $Y$ is an (iterative) D-subscheme of $(X,\partial_X)$ if $\partial_X$ induces an (iterative) Hasse derivation on $\mathcal O_Y$, or
equivalently, if the sheaf of ideals $\mathcal I_Y \subset \mathcal O_X$ is a sheaf of D-ideals (i.e. for each open subset $U\subset X$, $\mathcal I_Y(U)$ is an ideal of $\mathcal O_X(U)$ stable by $\partial_U$). 
\item We say that $(G,\partial)$ is a group scheme with an (iterative) Hasse derivation if $G$ is a group scheme over $K$, with an (iterative) D-structure $\partial$, such that the identity element is a D-point 
with value in $K$, and such that the inverse map and the multiplication map
are morphisms of schemes over $K$ with (iterative) D-structure.
\end{enumerate}
\end{definition}

\begin{remark}
For this last point, we have used the fact that if $(X,\partial_X)$ and $(Y,\partial_Y)$ are schemes with an (iterative) Hasse derivation over $K$, $X\times_K Y$ can be endowed in a unique way with an (iterative) Hasse derivation such that
the projection maps are morphisms in this category. It is a straightforward consequence of the existence of tensor products in the category of (iterative) Hasse differential algebras over $K$.
\end{remark}

In the case of algebraic groups over $K$, we can give another description of (iterative) D-structures, which uses the notion of prolongations. The two approaches coincide, see \cite{Benoist}
or \cite{KowalskiPillay2}.

We first recall the description of the prolongations for Hasse derivations, given in the greatest generality in \cite{MoosaScanlon} (see also
\cite{Buium2} or \cite{Vojta}).\\
If $V$ is a smooth irreducible algebraic variety over $K$, 
the $n$-th prolongation of $V$ is an algebraic variety $\Delta_n V$ over $K$ characterized as follows. For any $K$-algebra 
$\phi:K\to R$, the set of $R$-points of $\Delta_n V$ is
$ \Delta_nV(R)= V(R^{(n)})$, where $R^{(n)}=R[X]/(X^{n+1})$ is endowed with the structure of a $K$-algebra by the Taylor map $K\to R^{(n)}$, $a\mapsto \sum_{i=0}^n \phi(\partial_i(a))X^i$.\\
For example, if $V\subseteq \A^n$ is a smooth irreducible affine variety, then $\Delta_n V$ can be described as  
the Zariski-closure 
of the image
of $V(K)$ by $\partial_{\leq n}:=(\partial_0, \ldots,\partial_n)$,
$$
\Delta_nV:=\overline{\{\partial_{\leq n}(x) \, : \, x\in V(K)\}} \subseteq \A^{m{n+1}}.
$$ 
In general, using the Taylor map $K\to K^{(n)}$, we get a (definable) map 
$\partial_{\leq n}:V(K) \longrightarrow \Delta_n V(K)$, having Zariski-dense image. For $m\ge n \ge 0$, we have a 
natural projection morphism $\pi_{m,n} : \Delta_m V \longrightarrow \Delta_nV$ such that 
$\pi_{m,n}\circ \partial_{\le m} = \partial_{\le n}$.\\
These constructions are functorial, and in the
case where $V=G$ is a connected algebraic group, each $\Delta_n G$ has a natural structure of an algebraic group and the maps
$\partial_{\le n}$, $\pi_{m,n}$ are homomorphisms.

\begin{fact} \vlabel{ftDstructure}
Let $G$ be a connected algebraic group over $K$. 
There is a bijective correspondence between the
D-structures on the group scheme $G$ and the sequences of homomorphic regular sections $s=(s_n)_{n\in \N}$ for the projective
system $(\pi_{m,n} : \Delta_m G \longrightarrow \Delta_n G)_{m\ge n \ge 0}$ (i.e. we require that each $s_n : G \longrightarrow \Delta_n G$ is a regular homomorphism over $K$, and 
that these homomorphisms satisfy $\pi_{m,n} \circ s_m = s_n$ and $s_0=\text{id}_G$).\\
The condition for a D-structure to be iterative translates into obvious, but laborious to write out, conditions on the corresponding sequence of sections (see \cite{Benoist} or \cite{KowalskiPillay2}).\\
For $(G,\partial)$ a connected algebraic group with a D-structure $s$ over $K$, the corresponding sequence of sections and $(L,\partial)$ an extension of $K$, the set of D-points can be described as
the \infdef subgroup of $G(L)$:
$$
(G,\partial)^{\partial}(L)=\{x\in G(L) \, : \, \partial_{\le n}(x)=s_n(x) \text{ for all } n\ge 0\}.
$$ 
Moreover, if $L$ is a model ($\omega_1$-saturated in characteristic $p$), then $(G,\partial)^{\partial}(L)$ is Zariski-dense in $G$, and has transcendence degree equal to the dimension of $G$.
\end{fact}

\begin{remark} \vlabel{rationthin}
Let $G$ be a semiabelian variety over $K$. In order to define an iterative D-structure on $G$, it suffices that, for some (any) generic point $g$ of $G^{\sharp}(L)$ over $K$ ($L$ an elementary extension of $K$),
for any $n\ge 0$, $\partial_n(g)\in K(g)$. Indeed, because $G^{\sharp}$ is Zariski-dense in $G$, the existence of such a point $g$ induces a rational map from $G(L)$ to $\Delta_n G (L)$, which can be extended to a homomorphism
$s_n$ by a classical  stability argument. We obtain in this way a D-structure on $G$ because $s_n$ coincides with $\partial_{\le n}$ on the Zariski-dense subgroup $G^{\sharp}$, and the $\partial_{\le n}$'s give a sequence
of definable sections by definition. The iterativity comes from the iterativity of $\partial$, because on an affine open subset $U$, the Hasse derivation given by $(s_n)$ is such that $(Frac(\mathcal O_G(U)), D)$ is isomorphic to
$(K(g),\partial)$, which is an iterative Hasse field.\\
In particular, if $G$ is defined over the constants $\C$, for each $g\in
G^{\sharp}=G(\C)$, $\partial_n(g)=0$ for $n\ge 1$, hence we can define a
natural iterative D-structure on $G$. The two following results are a converse of  this observation.  
\end{remark}

\begin{fact}\vlabel{UnicityDstructure}
Let $G$ be a connected algebraic group over $K$.
Then for each $n\ge 0$, the kernel of $\pi_{n,0}: \Delta_nG \longrightarrow G$ is a unipotent group (see \cite{Pillay-countable} in characteristic $0$ or \cite{Benoist} in arbitrary characteristic). It follows that a semiabelian variety 
$G$ over $K$ admits at most one D-structure, since the
difference between two sections is a homomorphism $G\longrightarrow \text{Ker}(\pi_{n,0})$, hence zero.
\end{fact}

\begin{proposition}\vlabel{Dstructuredescent}
Let $G$ be a semiabelian variety over $K$ with an iterative D-structure. Then $G$ descends to the constants.
\end{proposition} 
\begin{proof}
In the characteristic $0$ case, this result appears implicitly in \cite{Buium1}, see Lemma 3.4 in \cite{BePi} for more explanations.\\
In the characteristic $p$ case, it is proved in \cite{BeDe} (Proof of
Theorem 4.4),   that such a semiabelian variety $G$ descends to $K^{p^n}$ for every $n$.
Then Proposition \ref{modulispace} applies.\end{proof}

\medskip

Note that in characteristic $0$, since an iterative Hasse derivation satisfies $\partial_i=\frac{1}{i!}\partial_1$, it suffices to have a usual derivation $D_1$ on $\mathcal O_G$, or equivalently a section $s=s_1:G \longrightarrow \Delta_1 G$ in 
order to define an iterative D-structure; $\Delta_1 G$ is also known as the twisted tangent bundle of $G$.

\bigskip 
We will now state the criteria for descent which we will be using. 
\medskip 

In the characteristic zero case  we quote from \cite{BePi},
section 3.1. and, in characteristic $p$,  this is the object of section \ref{Gtildecharp}.

\medskip 

\noindent (Characteristic $0$) Let $G$ be a semiabelian variety over $K$ and let $\tilde{G}$ denote the
universal extension of $G$ by a vector group (as defined in
\cite{Rosenlicht2}). Let us write  
$\tilde G$
as $$0 \longrightarrow W_G \longrightarrow \tilde G \longrightarrow G
\longrightarrow 0$$ where $W_G$ is a vector group.

\begin{fact} \vlabel{Dunipotent} 1. $\tilde{G}$ admits a unique iterative D-structure. 

\noindent 2. Consider $U_G$ the maximal subgroup of $W_G$ which is a D-subgroup of $(\tilde G,\partial)$. We still denote by $\partial$ 
the D-structure induced on $\tilde G/U_G$. Then
$G^{\sharp}$ is isomorphic to $(\tilde G/U_G ,\partial)^{\partial}(K)$. 

\noindent 3. It follows
from Proposition \ref{Dstructuredescent} that $G$ descends to the
constants if and only if $G\simeq \tilde{G}/W_G$ has a D-structure if
and only if $U_G=W_G$ (since in this case the projection $\tilde G \to \tilde G/W_G$ must preserve the D-structures, see Corollary 3.6 in \cite{BePi}).

\noindent 4. Furthermore the  functor of D-points is exact on
  the class of algebraic D-groups (\cite{kowalski-pillay}). In
  particular,  ${(\tilde
  G/U_G,\partial)}^{\partial}(K) \cong {(\tilde   G, \partial)^{\partial}}(K) / {(U_G ,\partial)^{\partial}}(K)$.
\end{fact}


\subsection{D-structure on $\tilde G $ in characteristic $p$} \label{Gtildecharp}

In this section, $char(K)=p$.
\medskip 

 In characteristic $p$, the universal extension of $G$ by
a vector group {\em does not in general have an iterative
D-structure}. Indeed, if $G$ is an arbitrary semiabelian variety, 
$(H,D)$ any connected algebraic
$K$-group with an iterative D-structure and $f$ a separable  morphism from $H$
onto $G$, then $G$ must be isogenous to  $(G_0)_K$ for some
$G_0 $ semiabelian variety over the constants: 
$f$ maps $(H,D)^{\partial}(L)$ onto $G^{\sharp}(L)$ by density ($L$ a
sufficiently saturated extension of $K$), and it follows that $K(\{g\})$ is finitely
generated over $K$ as a field where $g$ is a generic point of
$G^{\sharp}(L)$, which implies the conclusion by an argument given in
\cite{BeDe} (compare with Remark \ref{rationthin} and Proposition
\ref{Dstructuredescent}). 
This explains why the
introduction of group schemes (or proalgebraic groups) with D-structures
will be unavoidable in a uniform treatment of both characteristics.

The construction we describe below is, as we mentioned before, the
 D-structure argument which lies  behind the simple algebraic
 treatment we gave in Section  \ref{exactnessdescent}. But, as also
 mentioned at the beginning of Section \ref{sectionuniform}, 
most of the ``uniform'' results can in fact be read at the  ``analogy''
level, without actually understanding the $D$-structure in the
characteristic $p$ case. 

\smallskip 
\noindent More precisely:

Recall from section \ref{charp} that if  $G$ is a semiabelian variety
over $K$, 
$$\tilde{G}:=\mathop{\text{lim}}_{\leftarrow} (G_0\stackrel{[p]}{\longleftarrow} G_1 \stackrel{[p]}{\longleftarrow} \ldots ),$$
 Denote $T_pG$ by $W_G$, and $T_p G(K)$ by $(U_G,\partial)^\partial (K)$ and $\tilde G (K)$ by $(\tilde
G,\partial)^\partial (K)$. These are $*$-definable groups in $K$. 
From section  \ref{charp}, we know that \begin{center}$ G^{\sharp} \mbox{ is
  isomorphic to }  (\tilde G ,\partial)^\partial(K) /{(U_G ,
  \partial)^\partial (K)}  $.\end{center}

Then one can more or less jump to Section \ref{unifstat} and read the 
statements and proofs of Lemma \ref{exactnessofGtilde} 
and  Proposition \ref{exactnessofU}, as they are,  with the above definitions for the
characteristic $p$ case. Except for
condition (iv) in Proposition \ref{exactnessofU} , which then  makes sense only in
characteristic $0$.  

\medskip  

\noindent {\em We will now begin the real construction:} 

\smallskip

 We have $G$   and $\tilde G$ as above.  The (scheme-theoretic) kernel of the
projection $\pi:\tilde{G} \to G_0$ is the Tate module $T_pG$. From now on, it is important to consider $\tilde G$ and $T_pG$ as group schemes. 
We will denote by $X_0$ a system of coordinates for $G_0$ (for a fixed affine covering say), such that $X_0$ generates
the maximal ideal of the identity element of $G_0$, and by $X_i$
its image in $\mathcal O_{G_i}$ by the identity isomorphism.
It follows
that the sections of the sheaf 
$$
\mathcal O_{\tilde{G}}=\mathop{\text{lim}}_{\rightarrow} (\mathcal O_{G_0}  \stackrel{[p]^*}{\rightarrow} \mathcal O_{G_1} \ldots)
$$ 
are generated over $K$ by $(X_i)_{i\in \N}$.\\
Here are some definitions which will play a role in the construction of the D-structure on $\tilde{G}$ in the proof of Proposition \ref{DstructureGtilde}.
The map $[p]^*$ induced by $[p]$ on $\mathcal O_{\tilde G}$ is given by $[p]^*(X_0)=[p]^*_{G_0}(X_0)$ and $[p]^*(X_i)=X_{i-1}$ for $i\ge 1$. We can define a ``shift'' homomorphism $s$ on $\tilde G$ characterized by $s^*(X_i)=X_{i+1}$. It is clear that
$s\circ [p]=id_{\tilde G}$, and that $s$ and $[p]$ commute.\\
At the level of points, for $(g_0,g_1,g_2,\ldots)\in \tilde G(L)$, $L$ an extension of $K$, $s(g_0,g_1,g_2,\ldots)=(g_1,g_2,g_3,\ldots)$ and $[p](g_0,g_1,g_2,\ldots)=([p]g_0,g_0,g_1,\ldots)$.\\ 
Now for each $n$, we have $[p^n]_{\tilde G}=V_n\circ F^n$, where $F^n:\tilde G \to \tilde G ^{(p^n)}$ is the power of the Frobenius homomorphism and $V_n : \tilde{G}^{(p^n)} \to \tilde G$ the $n$-th Verschiebung (induced by the Verschiebung on each $G_ i$).\\

\begin{proposition} \label{DstructureGtilde}
There exists an iterative D-structure on $\tilde G$. Moreover, this D-structure is unique ``in a strong sense''.
\end{proposition}
\begin{proof}
We first state the uniqueness in a strong sense: for any homomorphism of K-algebras $D_0:\Oc_{\tilde G} \to A$ (strictly speaking we should replace $\Oc_{\tilde G}$ by its ring of sections for some open set), there is at most
one (non iterative) Hasse derivation from $\Oc_{\tilde G}$ to $A$ over $K$ extending $D_0$. By this we only mean a sequence of additive maps $(D_i:\Oc_{\tilde G} \to A)_{i\in \N}$ satisfying the generalized Leibniz rule and agreeing with $\partial$ on $K$ 
(we can not require iterativity at this level of generality since $A\neq \mathcal O_{\tilde G}$).\\
We assume that we have such a Hasse derivation $(D_i)_{i\in \N}$, and we consider some $f \in \mathcal O_{\tilde G}$ and some index $i<p^n$. Because of the previous equalities, we must have $D_i(f)=D_i(F^{n*}\circ V_n ^* \circ s^{n*} (f))$. 
But $F^{n*}(V_n^* \circ s^{n*}(f))$ can be represented locally as a rational function
of the variables $X_j^{p^n}$, hence there is a unique possible value for $D_i(f)$ because if $P$ is a polynomial with coefficients in $K$, $D_i(P(X^{p^n}))=P^{\partial_i}(D_0(X)^{p^n})$ where $P^{\partial_i}$ is obtained by 
applying $\partial_i$ to the coefficients of 
$P$.\\
Now we start to define a truncated Hasse derivation on $\Oc_{\tilde G}$. Since $\tilde G^{(p^n)}$ descends to $K^{p^n}$, on which $\partial_{< p^n}$ is trivial, we obtaine a truncated Hasse derivation 
$D'_{<p^n}$ on $\mathcal O_{\tilde G^{(p^n)}}=
\mathcal O_{G'} \otimes_{K^{p^n}} K$ ($G'$ is a model of $\tilde G^{(p^n)}$ over $K^{p^n}$) by putting the trivial truncated Hasse derivation on $\mathcal O_{G'}$. Now we define
$$
D_{<p^n}=F^{n*}\circ D'_{<p^n} \circ V_n^* \circ s^{n*}.
$$   
It is clear that $D'_{<p^n}$ preserves the comultiplication and coinverse of $G'$ because these are $K^{p^n}$-morphisms, hence $D_{<p^n}$ preserves the group structure of $\tilde G$. And because of the uniqueness that we have noticed before, the $D_{<p^n}$ for
different $n$'s are compatible, hence we have defined a (the unique) D-structure D on $\tilde G$. It is actually iterative since the $D'_{<p^n}$ are (as tensor product of the trivial iterative derivation and $\partial$) and since
$V_n^*\circ s^{n*} \circ F^{n*}=id$ (because $F^n\circ s^n \circ V_n=F^n(s^n) \circ F^n \circ V_n = F^n(s^n\circ [p^n]_{\tilde G})=id$).
\end{proof}
\begin{remark}
Let us give a slightly informal description of this D-structure in term of sections $s_n :\tilde G \to \Delta_n \tilde G$. For instance, $s_1 : \tilde G \to \Delta_1 \tilde G = \stackrel{\text{lim}}\leftarrow \Delta_1 G_i$ is given
by the sequence of homomorphisms $(s_{1i})_i$:

~\\
\begin{center}
\begin{pspicture}(5,2)
\rput(0,0){\rnode{a00}{$\Delta_1 G_0$}}
\rput(2,0){\rnode{a20}{$\Delta_1 G_1$}}
\rput(4,0){\rnode{a40}{$\Delta_1 G_2$}}
\rput(5,0){\rnode{a}{$\ldots$}}
\rput(0,2){\rnode{a02}{$G_0$}}
\rput(2,2){\rnode{a22}{$G_1$}}
\rput(4,2){\rnode{a42}{$G_2$}}
\rput(5,2){\rnode{b}{$\ldots$}}
\psset{arrows=->,nodesep=3pt,shortput=tablr,linewidth=0.1pt}
\ncline{a20}{a00}_{$\Delta_1[p]$}
\ncline{a40}{a20}_{$\Delta_1[p]$}
\ncline{a22}{a02}^{$[p]$}
\ncline{a42}{a22}^{$[p]$} 
\ncline{a00}{a02}<{$\pi_0$}
\ncline{a20}{a22}<{$\pi_1$}
\ncline{a40}{a42}<{$\pi_2$}
\ncline{a22}{a00}<{$s_{11}$}
\ncline{a42}{a20}<{$s_{12}$} 
\end{pspicture}
\end{center}
~\\
where, if $x_i \in G_i(K)$ ($i\le 1$), 
$$
s_{1i}(x_i)=([p]x_i,V^{\partial_1}\circ F(x_i))\in \Delta_1G_{i-1}(K).
$$
As in the proof of Propostion \ref{DstructureGtilde}, $V^{\partial_1}$ is defined so that for $K$-rational points $V^{\partial_1}(F(x))=\partial_1(V(F(x)))$ (recall that $F(x)$ is a constant for $\partial_1$), which corresponds to applying
$\partial_1$ to the coefficients
of $V$ when $V$ is a polynomial, with the obvious generalization for rational functions. Let us remark that for every $(a_i)_{i \in \N} \in \tilde G (K)$, $((a_i)_i,(\partial_1(a_i))_i)=s_{1}((a_i)_i)$; it is actually a general fact, proved in Lemma \ref{Dpoints}.

\end{remark}

\begin{remark} \vlabel{strongunique}
It follows from the uniqueness in the strong sense that if $(X,\partial_X)$ is a scheme with a D-structure, and $f:X\to \tilde G$ a morphism of schemes, it is automatically a morphism of schemes with a D-structure: for any open subset $U\subseteq \tilde G$,
the corresponding homomorphism $f^*_U : \mathcal O_{\tilde G}(U) \to \mathcal O_X(f^{-1}(U))$ is such that $\partial_X \circ f^*_U$ and $f^*_U \circ \partial_{\tilde G}$ are two Hasse derivations extending $f^*_U$, hence must coincide, which means that
$f^*_U$ is a D-homomorphism.
\end{remark}

We now focus on the D-points of $(\tilde G, D)$. 
\begin{lemma} \vlabel{Dpoints}
Let $(R,\partial)$ be an iterative Hasse differential $K$-algebra. Then
$(\tilde G,D)^{\partial}(R)=\tilde G (R)$. Of course it is still true for every D-subscheme of $(\tilde G,D)$.
\end{lemma}
\begin{proof}
It is simply Remark \ref{strongunique} for the particular case $(X,\partial_X)=(Spec\;R,\partial)$.
\end{proof}

In Fact \ref{Dunipotent} we defined, in characteristic $0$,
$U_G$ to be the maximal subgroup of $W_G$ which is a D-subgroup of $(\tilde G,D)$.  
Here is the characteristic $p$ version:

\begin{definition} \label{UGcharp}
We define $U_G$ as the maximal closed subscheme of $W_G:= T_pG$ which is a D-subscheme of $(\tilde G,D)$. If we have chosen $X_0$ such that it generates the maximal ideal of the identity element
of $G_0$, $U_G$ is the D-subscheme of $(\tilde G,D)$ defined by the sheaf of D-ideals $\mathcal I_U$ of $(\mathcal O_{\tilde G},D)$ generated by $X_0$. We see that $U_G$ is actually a group D-subscheme of $(\tilde G,D)$ because D preserves the group structure,
which implies that $\iota(\mathcal I_U)\subseteq \mathcal I_U$ and $\mu (\mathcal I_U)\subseteq \mathcal I_U \otimes \mathcal I_U$.
\end{definition}

As in characteristic $0$, we obtain: 

\begin{lemma} \vlabel{UGdesc}
$G$ descends to the constants if and only if $T_pG(= W_G) =U_G$.
\end{lemma}
\begin{proof}
The argument is standard. We know that $G$ descends to the constants if
and only if it admits an iterative D-structure (see section
\ref{sectionDstructures}). If $T_pG=U_G$, $T_pG$ is a D-subgroup of $\tilde G$, hence there is an iterative D-structure on the quotient $\tilde G/T_pG \simeq G$ (it is done in the characteristic $0$ case in \cite{kowalski-pillay}, details in characteristic $p$ are worked out in \cite{Benoist3}).
For the converse, if $G$ descends to the constants, then up to isomorphism, each $G_i$ is endowed with the trivial iterative D-structure, and each $[p]$ map is a D-morphism. It follows that the unique (iterative) D-structure on $\tilde G$
is the trivial one, for which $T_pG$ is a D-subgroup of $\tilde G$, hence equal to $U_G$.
\end{proof}

\begin{lemma} \vlabel{UG}
$U_G(K)=(U_G,D)^{\partial}(K)=T_pG(K)$.
\end{lemma}
\begin{proof}
By Lemma \ref{Dpoints}, each point in $T_pG(K)$ is a D-point of $(\tilde G,D)$. It follows that the corresponding closed point of $T_pG$ is a maximal D-ideal of $(\mathcal O_{\tilde G},D)$ containing $X_0$, hence it is in $U_G$. Conversely, we 
have $U_G(K)\subseteq T_pG(K)$.
\end{proof}

\begin{lemma} \vlabel{Kbar}
$U_G(K)=U_G(\overline K)$
\end{lemma}
\begin{proof}
Let $\mathcal{I}_U$ be the sheaf of D-ideals defining $U_G$ (in fact we consider its sections on an affine open set). The reduced scheme $(U_G)_{red}$ is defined by $\sqrt{\mathcal I_U}$. It is well known that $\sqrt{\mathcal I_U}$ is the intersection
of all the prime ideals containing $\mathcal I_U$. But, since $\mathcal I_U$ is a differential ideal, $\sqrt{\mathcal I_U}$ is also the intersection of all the prime D-ideals containing $\mathcal I_U$ (see \cite{Benoist} for example).\\
Now consider $M$, any maximal ideal of $\Oc_{\tilde G}$ containing $\mathcal I_U$. We want to show that $M$ is a D-ideal. Let $f$ be in $M$, it is in $\Oc_{G_i}\subseteq \Oc_{\tilde G}$ for some $i$. Let $j<p^n$ be some index. From the first remark,
$\bigcap P \subseteq M$, where $P$ runs over the prime D-ideals of $\Oc_{\tilde G}$ containing $\mathcal I_U$. In particular, $\bigcap (P\cap \Oc_{G_{i+n}}) \subseteq M\cap \Oc_{G_{i+n}}$, but the first one 
is the ideal defining $(\pi_{i+n}(U_G))_{red}$, a finite 
scheme (here $\pi_{i+n}$ is the projection of $\tilde G$ onto $G_{i+n}$). It follows that  $M\cap \Oc_{G_{i+n}} =P\cap \Oc_{G_{i+n}}$ for some D-ideal $P$ containing $\mathcal I_U$ (which may depend on $i$ and $n$). But now we have
$D_j(f)=F^{n*}\circ D'_j\circ V_n^* \circ s^{n*}(f)$, with $s^{n*}(f)\in \Oc_{G_{i+n}}$ by definition and $D_j(f)$ as well because $\Oc_{G_{i+n}}$ is stable under $F^{n*}\circ D'_j \circ V_n^{*}$. As $P$ is a D-ideal, 
we have $D_j(f)\in (P\cap \Oc_{G_{i+n}})\subseteq M$.That is, $M$ is a D-ideal.\\
Now we can conclude: for any point $x\in U_G(\overline K)$, the corresponding maximal ideal $M$ of $\Oc_{\tilde G}$ is a D-ideal, hence the residue field $K(x)$ is an algebraic D-extension of $K$. But we know that any algebraic D-extension of $K$ is trivial because
$K$ is existentially closed
(see \cite{Ziegler1} for example), hence $x\in U_G(K)$.
\end{proof}\\

In order to deal with a big chunk of the non reduced part of $T_pG$, we introduce the following morphism of group schemes $\tilde F: \tilde G \to \tilde{\tilde G}$, for $\tilde F$ and $\tilde{\tilde G}$ defined as follows (recall that $F^i$
is the Frobenius homomorphism $G\to G^{(p^i)}$):

~\\
\begin{center}
\begin{pspicture}(7,2)
\rput(0,2){\rnode{a02}{$\tilde G$}}
\rput(0.5,2){\rnode{a}{=}}
\rput(1,2){\rnode{a12}{${\stackrel{\text{lim}}{\leftarrow}}$}}
\rput(2,2){\rnode{a32}{$G_0$}}
\rput(4,2){\rnode{a52}{$G_1$}}
\rput(6,2){\rnode{a72}{$G_2$}}
\rput(7,2){\rnode{a82}{$\ldots$}}
\rput(0,0){\rnode{a00}{$\tilde{\tilde G}$}}
\rput(0.5,0){\rnode{b}{=}}
\rput(1,0){\rnode{a10}{${\stackrel{\text{lim}}{\leftarrow}}$}}
\rput(2,0){\rnode{a30}{$G_0$}}
\rput(4,0){\rnode{a50}{$G_1^{(p)}$}}
\rput(6,0){\rnode{a70}{$G_2^{(p^2)}$}}
\rput(7,0){\rnode{a80}{$\ldots$}}
\psset{arrows=->,nodesep=3pt,shortput=tablr,linewidth=0.1pt}
\ncline{a52}{a32}^{$[p]$}
\ncline{a72}{a52}^{$[p]$}
\ncline{a50}{a30}_{$V_G$}
\ncline{a70}{a50}_{$V_{G^{(p)}}$} 
\ncline{a02}{a00}<{$\tilde F$}
\ncline{a32}{a30}<{id}
\ncline{a52}{a50}<{$F$}
\ncline{a72}{a70}<{$F^2$} 
\end{pspicture}
\end{center}
~\\

\noindent
This is well-defined since for all $i$, $F^i\circ[p]=[p]\circ F^i=V_{G^{(p^i)}}\circ F^{i+1}$.\\ 
For each $i$, we will denote by $Y_i$ a system of coordinates of $G_i^{(p^i)}$ such that $F^{i*}(Y_i)=X_i^{p^i}$; $Y_i$ generates the maximal ideal of the identity element in $G_i^{(p^i)}$.
\begin{lemma}
$Ker(\tilde F)$ is a D-subscheme of $\tilde G$. In particular $Ker(\tilde F)\subseteq U_G$.
\end{lemma}
\begin{proof}
The sheaf of ideals $\mathcal I$ of the kernel is generated by the $F^{i*}(Y_i)$'s, that is by the $X_i^{p^i}$'s. If $p^i$ does not divide $j$, $D_j(X_i^{p^i})=0$; and if $j=hp^i$ for some $h<p^n$, we have $D_j(X_i^{p^i})=(D_h(X_i))^{p^i}$,
with
$$
D_h(X_i)=F^{n*} \circ D'_h \circ V_n^* (X_{i+n}).
$$
Here the map $V_n:\tilde G^{(p^n)}\to \tilde G$ comes from the homomorphisms $V_n:G_k^{(p^n)} \to G_k$, $k\ge 0$. It follows that $V_n^*(X_{i+n})\in \mathcal{M}$, the maximal ideal of the identity element in $\mathcal O_{G_{i+n}^{(p^n)}}$. Since
the identity element of
$G_{i+n}^{(p^n)}$ is a $\text{D}_{<p^n}$-point for the trivial truncated D-structure on $G_{i+n}^{(p^n)}$, $D'_h(\mathcal M)\subseteq \mathcal M$. Hence $D_h(X_i)\in (F^{n*}(\mathcal M)) \subseteq (X_{i+n}^{p^n})$, and $D_j(X_i^{p^i})\in 
(X_{i+n}^{p^{i+n}}) \subseteq \mathcal I$.\end{proof}\\

We now need $G$ to be ordinary. We will use here the structure theorems for affine commutative group schemes over a field $L$ (note that $Ker(\tilde F)$, $U_G$, $T_pG$ are objects of this category since they are commutative profinite), for which our reference is \cite{DG} (chap. III, \textsection 3 and chap. V, \textsection 3). Let us recall that if $H$ is a profinite commutative group scheme over $L$, the connected component $H^\circ$ of the neutral element is an infinitesimal group subscheme, namely satisfies $H^\circ(\overline L)=\{0\}$. Moreover, the quotient $H/H^\circ$ is pro\'etale (that is a projective limit of finite \'etale groups), and $H^\circ$ is the unique connected group subscheme of $H$ with this property. Furthermore, if we assume $L$ to be algebraically closed, the reduced subscheme $H_{red}$ is isomorphic to $H/H^\circ$ via the projection map (it is in particular a group subscheme of $H$), and $H$ is isomorphic to the direct product $H_{red}\times H^\circ$. Note finally that taking the connected component $H^\circ$ commutes with base change of fields, and the same is true for taking the quotient $H/H^\circ$.

\begin{lemma} \vlabel{UGinf}
Suppose that $G$ is ordinary. Then $(T_pG)^\circ=U_G^\circ=Ker(\tilde F)$.
\end{lemma}
\begin{proof}
We have seen that $Ker(\tilde F)\subseteq U_G \subseteq T_pG$ (as closed group subschemes).
But $T_pG/Ker(\tilde F)$ is a group subscheme of the projective limit
$$
\mathop{\text{lim}}_{\leftarrow} (0\stackrel{V_G}{\longleftarrow} Ker(V_1) \stackrel{V_{G^{(p)}}}{\longleftarrow} Ker (V_2) \ldots ),
$$
which is a pro\'etale group scheme since each $Ker(V_n)$ \'etale ($V_n$ is separable since $G$ is ordinary). 
It follows that $T_pG/Ker(\tilde F)$ and $U_G/Ker(\tilde F)$ are themselves pro\'etale (this category is closed under subobjects). Since obviously $Ker(\tilde F)(\overline K)=\{0\}$, the lemma is proved.
\end{proof}

\medskip

By combining the previous  lemmas, we obtain a new proof of the
Proposition \ref{abelianseparabletorsion} with a ``D-structure
flavor''. Note that for any profinite group scheme $X$  over $K$,
$X_{red}$ is the unique reduced closed subscheme of $X$ such that
$X(\overline K)=X_{red}(\overline K)$ . 

\begin{proposition} \label{U=W} 
Let $G$ be  an ordinary semiabelian variety over $K$. Then $G$ descends to the constants if and only if $T_pG(\overline K)=U_G(\overline K)$ if and only if $T_pG(K)=T_pG(\overline K)$.
\end{proposition}
\begin{proof}
We have obtained that $G$ descends to the constants if and only if $T_pG=U_G$ (\ref{UGdesc}). But $T_pG=U_G$ if and only if $(T_pG)_{\overline K}=(U_G)_{\overline K}$ (since field extensions are obviously faithfully flat), and by the direct sum decomposition, 
$(T_pG)_{\overline K}=(U_G)_{\overline K}$ if and only if $((T_pG)_{\overline K})_{red}=((U_G)_{\overline K})_{red}$ and $(T_pG)_{\overline K}^\circ=(U_G)_{\overline K}^\circ$. But by \ref{UGinf}, $(T_pG)^\circ=U_G^\circ$, hence $T_pG=U_G$ if and only if $T_pG(\overline K)=U_G(\overline K)$. Since
$U_G(\overline K)=U_G(K)=T_pG(K)$ (\ref{UG} and \ref{Kbar}), we have the result.
\end{proof}

~\\

\subsection{Uniform statements and proofs} \vlabel{unifstat}

We will consider $G$ a
semiabelian variety over $K$. Recall that, in characteristic $0$, $\tilde G$ is the
universal extension of $G$  by a vector group and $W_G, U_G$ are defined
as described in Fact \ref{Dunipotent}. In characteristic $p$, $\tilde G$, $U_G$ and $W_G=T_pG$  have been defined in section \ref{Gtildecharp}.


In all characteristics, we know from Fact \ref{Dunipotent} in characteristic $0$, and from Lemmas \ref{sharpTate} and \ref{UG} in characteristic $p$, that 
$$ G^{\sharp} \mbox{ is isomorphic to }  (\tilde G,\partial)^\partial (K) /{(U_G,\partial))^\partial}(K)  $$
where of course, by isomorphic here we mean isomorphic as $*$-definable subgroups.

\noindent Notation: If $f : G \longrightarrow H$ is a morphism of
semiabelian varieties 
over $K$, we denote by $\tilde f$ the induced morphism from $\tilde G$
to $\tilde H$. \\
In the following, we will often drop the $\partial$ for a scheme $(X,\partial)$ with a D-structure when no ambiguity arises.\\
If $H_1$, $H_2$ are proalgebraic groups  over $K$ with a
D-structure, and $h : H_1 \longrightarrow H_2$ is a morphism of
proalgebraic groups with a D-structure, we 
denote by $h^\partial$ the induced $*$-definable homomorphism from
${H_1}^\partial(K)$ to ${H_2}^\partial(K)$.  When $G,H$ are semiabelian
varieties, $\tilde G$ and $\tilde H$ have unique D-structures, and so for any 
$f:G\to H$, $\tilde f$ respects the D-structures (see Remark \ref{strongunique} in characteristic $p$, and Corollary 3.6 from \cite{BePi} in characteristic $0$),
whereby ${\tilde f}^{\partial}$ is defined. 
For the same reason, $\tilde f$ induces the maps $\tilde f_U:U_G \to U_H$ and $(\tilde f_U)^{\partial}:U_G^{\partial}(K) \to U_H^{\partial}(K)$ 
(see sections \ref{sectionDstructures} and \ref{Gtildecharp}).\\

\begin{lemma}\vlabel{exactnessofGtilde} Let  $0\longrightarrow G_1
  \longrightarrow G_2  \stackrel{f}\longrightarrow  G_3\longrightarrow 0$ be an
  exact sequence of  semiabelian varieties over $ K $. Then the
  sequence  $0\longrightarrow (\tilde G_1)^\partial (K)  \longrightarrow
  (\tilde G_2)^\partial (K) \stackrel{{\tilde f}^\partial} \longrightarrow   (\tilde
  G_3)^\partial (K) \longrightarrow 0$ is also exact.  \end{lemma}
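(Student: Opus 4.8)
The plan is to treat the two characteristics separately, since $\tilde G^\partial$ means different things in each.

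\textbf{Characteristic $0$.} Here each $\tilde G_i$ is the universal extension of $G_i$ by a vector group, carrying its canonical $D$-structure (Fact \ref{Dunipotent}). First I would verify that the universal-extension construction sends the given sequence to an exact sequence of algebraic $D$-groups $0 \longrightarrow \tilde G_1 \longrightarrow \tilde G_2 \longrightarrow \tilde G_3 \longrightarrow 0$: functoriality of the universal extension provides the morphisms, the vector-group parts fit into $0 \longrightarrow W_{G_1} \longrightarrow W_{G_2} \longrightarrow W_{G_3} \longrightarrow 0$, and a snake-lemma argument together with a dimension count (the dimensions of the $W_{G_i}$ are additive along the sequence) yields exactness; the induced $D$-structures agree by the uniqueness in Fact \ref{UnicityDstructure}. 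Then I would simply invoke the exactness of the $\partial$-functor on the class of algebraic $D$-groups, quoted in Fact \ref{Dunipotent}, to conclude that $0 \longrightarrow (\tilde G_1)^\partial \longrightarrow (\tilde G_2)^\partial \longrightarrow (\tilde G_3)^\partial \longrightarrow 0$ is exact.

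\textbf{Characteristic $p$.} Here $(\tilde G_i)^\partial = \tilde G_i(K)$, whose elements are coherent sequences $(x_n)$ with $x_n \in G_i(K)$ and $[p]x_{n+1} = x_n$. Injectivity of $\tilde G_1(K) \to \tilde G_2(K)$ is immediate since $G_1 \hookrightarrow G_2$. For exactness at the middle, a sequence $(x_n) \in \tilde G_2(K)$ lies in $\ker {\tilde f}^\partial$ iff $f(x_n)=0$ for all $n$, iff each $x_n \in \ker f \cap G_2(K) = G_1(K)$ (using that $G_1$ is a closed subgroup defined over $K$, so a $K$-point of $G_2$ lying in $G_1$ is a $K$-point of $G_1$), iff $(x_n) \in \tilde G_1(K)$; this is precisely the image.

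The main obstacle is surjectivity of ${\tilde f}^\partial : \tilde G_2(K) \to \tilde G_3(K)$, that is, lifting a coherent sequence $(y_n)$ to a coherent sequence. A naive level-by-level lift fails: a finite coherent lift $(x_0,\dots,x_n)$ extends one step upward only if a certain resulting element of $G_1(K)$ happens to be $p$-divisible in $G_1(K)$, which need not hold, so a K\"onig's lemma argument does not directly apply. Instead I would exploit $\omega_1$-saturation, available by the standing assumption on $K$. Consider the partial type, over the countable parameter set consisting of a finitely generated field of definition of $G_2, G_3, f$ together with the $y_n$, in variables $(x_n)_{n\in\N}$ asserting $x_n \in G_2$, $f(x_n)=y_n$, and $[p]x_{n+1}=x_n$. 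Each finite fragment, involving only $x_0,\dots,x_N$, is satisfiable: since $f$ is separable and surjective, Proposition \ref{separablemorphisms} gives some $x_N \in G_2(K)$ with $f(x_N)=y_N$, and putting $x_i := [p]^{N-i}x_N$ yields a coherent finite lift for which $f(x_i)=y_i$ holds automatically (as $y_i = [p]^{N-i}y_N$). Hence the type is consistent, and by $\omega_1$-saturation it is realized in $K$, producing the desired $(x_n)\in \tilde G_2(K)$ with ${\tilde f}^\partial((x_n))=(y_n)$. The decisive point, which sidesteps the $p$-divisibility obstruction, is that each finite approximation is constructed afresh from its top coordinate rather than by extending a previously fixed partial lift; saturation then glues these approximations into a single coherent infinite sequence.
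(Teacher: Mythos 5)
Your proof is correct and follows essentially the same route as the paper's: in characteristic $0$ it passes to the exact sequence of universal vectorial extensions with their (unique) $D$-structures and invokes exactness of the $\partial$-functor on algebraic $D$-groups, and in characteristic $p$ it realizes, by $\omega_1$-saturation, exactly the same $\omega$-type, with the same finite-satisfiability argument of lifting only the top coordinate $y_N$ (via Proposition \ref{separablemorphisms}, using separability of $f$) and pushing down by powers of $[p]$. Your observation that a naive level-by-level or K\"onig's-lemma lift fails because of the $p$-divisibility obstruction in $G_1(K)$ is a correct account of why saturation is the right tool here, matching the paper's remark that surjectivity ``is not as obvious.''
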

\begin{proof} In characteristic $0$, $\tilde G_i$ is the universal
  vectorial extension of $G_i$ and the  sequence
  $$0\longrightarrow \tilde G_1 \longrightarrow \tilde G_2
  \stackrel{\tilde f}\longrightarrow  \tilde G_3\longrightarrow 0$$ is also exact
(see Appendix B).
Each $\tilde G_i$ admits a (unique) D-structure and the functor $H
  \mapsto H^\partial (K)$ preserves exact sequences from the category of
 algebraic groups with a D-structure to the category of definable groups (see \cite{kowalski-pillay}).
In characteristic $p$, the sequence $0\to G_1(K) \to G_2(K) \to G_3(K) \to 0$ is exact because $K$ is separably closed, hence, passing to
the projective limit in the category of $*$-definable groups,  
$$
0\longrightarrow \tilde G_1(K) \longrightarrow \tilde G_2(K)
  \stackrel{\tilde f}\longrightarrow  \tilde G_3(K)
$$ 
is also exact. The fact that $\tilde f : \tilde G_2(K) \to \tilde G_3(K)$ is surjective follows from the surjectivity of $f:G_2(K) \to G_3(K)$ and from the $\omega_1$-saturation of $K$
\end{proof} 

\medskip

The next proposition gives us a very useful equivalent to the
exactness of the $\sharp$ functor. It should be noted that there is
no assumption that any of the $U_{G_i}^\partial$'s, or any of the $U_{G_i}$'s, 
are  non trivial. 

Given the exact  sequence $0\longrightarrow G_1 \longrightarrow G_2
\stackrel{f}\longrightarrow  G_3\longrightarrow 0$, $\tilde f$, $(\tilde f)^\partial$, $\tilde f_U$ and $(\tilde f_U)^{\partial}$ are the induced
maps as above
and $\tilde f_\pi$ denotes the induced map from
$G_2^\sharp$ to $G_3^\sharp$, when we identify 
$G_i^\sharp$ with  $(\tilde G_i)^\partial (K) / (U_{G_i})^\partial (K)$.

\begin{proposition}\vlabel{exactnessofU} Let 
 $0\longrightarrow G_1 \longrightarrow G_2 \stackrel{f}\longrightarrow  G_3\longrightarrow 0$ be an
  exact sequence of  semiabelian varieties over $ K $. 
For convenience, we assume that $G_1$ is a semiabelian subvariety of $G_2$.
Then
  the following are equivalent: 

(i)  $0\longrightarrow {G_1}^\sharp  \longrightarrow {G_2}^\sharp 
  \stackrel{f_\pi}\longrightarrow  {G_3}^\sharp \longrightarrow 0$ is exact \\

(ii) $0\longrightarrow (U_{G_1})^\partial (K) \longrightarrow( U_{G_2})^\partial (K)
  \stackrel{(\tilde f_U)^\partial}\longrightarrow  (U_{G_3})^\partial (K)
  \longrightarrow 0$ is exact\\

(iii) $(\tilde f_U)^\partial \, : (U_{G_2})^\partial (K)\longrightarrow \,
  (U_{G_3})^\partial (K)$ is surjective\\ 

(iv) $0\longrightarrow U_{G_1}(\overline K)  \longrightarrow U_{G_2}(\overline K)
  \stackrel{\tilde f_U}\longrightarrow   U_{G_3}(\overline K) \longrightarrow 0$ is exact. \\

\noindent Furthermore $(G_1(K) \cap {G_2}^\sharp) / {G_1}^\sharp   \, \widetilde{\longrightarrow}
 \,  (U_{G_3})^\partial (K)/ {(\tilde f_U)}^\partial ( ( U_{G_2})^\partial (K))$. 
\end{proposition}

\begin{proof} From
Lemma \ref{exactnessofGtilde}, one obtains the
following commutative diagram of exact sequences (*):

~\\
\begin{center}
\begin{pspicture}(10,5)
\rput(5,0){\rnode{a40}{$0$}}
\rput(0,1){\rnode{a02}{$0$}}
\rput(2,1){\rnode{a22}{$(U_{G_3})^\partial (K)$}}
\rput(5,1){\rnode{a42}{$(\tilde G_3)^\partial (K)$}}
\rput(8,1){\rnode{a62}{$({G_3})^\sharp $}}
\rput(10,1){\rnode{a82}{$0$}}
\rput(0,3){\rnode{a04}{$0$}}
\rput(2,3){\rnode{a24}{$(U_{G_2})^\partial (K)$}}
\rput(5,3){\rnode{a44}{$(\tilde G_2)^\partial (K)$}}
\rput(8,3){\rnode{a64}{$({G_2})^\sharp $}}
\rput(10,3){\rnode{a84}{$0$}}
\rput(5,4){\rnode{a46}{$(\tilde{G_1})^\partial (K)$}}
\rput(5,5){\rnode{a48}{$0$}}

\psset{arrows=->,nodesep=3pt,shortput=tablr,linewidth=0.1pt}
\ncline{a42}{a40}
\ncline{a02}{a22}
\ncline{a22}{a42} 
\ncline{a42}{a62}_{$\pi_3$}
\ncline{a62}{a82}
\ncline{a04}{a24}
\ncline{a24}{a44} 
\ncline{a44}{a64}^{$\pi_2$}
\ncline{a64}{a84}
\ncline{a26}{a46}
\ncline{a48}{a46}
\ncline{a46}{a44}
\ncline{a24}{a22}>{$(\tilde f_U )^\partial$}
\ncline{a44}{a42}>{$(\tilde f)^\partial$}
\ncline{a42}{a40}
\ncline{a64}{a62}>{$\tilde f_{\pi}$}
\end{pspicture}
\end{center}
~\\

\noindent{\bf Claim:}   $Ker ({\tilde f_U})^\partial =
(U_{G_1})^\partial (K)$. 

First note that $(U_{G_i})^\partial (K)=(\tilde{G_i})^\partial (K) \cap W_i$, since it is the kernel of the restriction of $\pi_i$ to $(\tilde{G_i})^\partial (K)$, and 
$W_2 \cap \tilde{G_1}=W_1$, since $W_i$ is
the kernel
of $\pi_i : \tilde{G_i} \to G_i$. It follows that
$(U_{G_1})^\partial(K)=(\tilde{G_1})^\partial (K) \cap W_1 = (\tilde{G_1})^\partial (K) \cap W_2=(\tilde{G_1})^\partial (K) \cap (\tilde{G_2})^\partial (K) \cap W_2 = Ker(({\tilde f})^\partial) \cap (U_{G_2})^\partial (K)=Ker ({\tilde f_U})^\partial$.

\qed 

\medskip 

\noindent Let $S :=(U_{G_3})^\partial (K) / 
(\tilde f_U)^\partial (  (U_{G_2})^\partial (K)) $ (the cokernel of
$(\tilde f_U )^\partial$).  Then the classical Snake Lemma applied to diagram (*) gives the existence
of a homomorphism $d$ from  $Ker (\tilde f_\pi )$ to $S$,   such that the sequence 
$0 \, \longrightarrow (U_{G_1})^\partial \, \longrightarrow \, {(\tilde G_1)}^\partial \, \longrightarrow \,
  Ker (\tilde f_\pi )\, \stackrel{d}\longrightarrow \, S \, \longrightarrow \,
  0 \longrightarrow 0$ is exact 
in the following commutative diagram:

~\\
\begin{center}
\begin{pspicture}(10,5)
\rput(2,0){\rnode{a20}{$S$}}
\rput(5,0){\rnode{a40}{$0$}}
\rput(0,1){\rnode{a02}{$0$}}
\rput(0,4){\rnode{a06}{$0$}}
\rput(2,1){\rnode{a22}{$(U_{G_3})^\partial (K)$}}
\rput(5,1){\rnode{a42}{$(\tilde{G_3})^\partial (K)$}}
\rput(8,1){\rnode{a62}{${G_3}^\sharp $}}
\rput(10,1){\rnode{a82}{$0$}}
\rput(0,3){\rnode{a04}{$0$}}
\rput(2,3){\rnode{a24}{$(U_{G_2})^\partial (K)$}}
\rput(5,3){\rnode{a44}{$(\tilde{G_2})^\partial (K)$}}
\rput(8,3){\rnode{a64}{${G_2}^\sharp $}}
\rput(10,3){\rnode{a84}{$0$}}
\rput(2,4){\rnode{a26}{$(U_{G_1})^\partial (K)$}}
\rput(5,4){\rnode{a46}{$(\tilde{G_1})^\partial (K)$}}
\rput(8,4){\rnode{a66}{Ker$(\tilde f_{\pi}) $}}
\rput(5,5){\rnode{a48}{$0$}}
\rput(8,0){\rnode{a60}{$0$}}
\rput(2,5){\rnode{a28}{$0$}}
\rput(8,5){\rnode{a68}{$0$}}

\psset{arrows=->,nodesep=3pt,shortput=tablr,linewidth=0.1pt}
\ncline{a28}{a26}
\ncline{a68}{a66}
\ncline{a20}{a40}
\ncline{a02}{a22}
\ncline{a06}{a26}
\ncline{a22}{a42} 
\ncline{a42}{a62}^{$\pi_3$}
\ncline{a62}{a82}
\ncline{a04}{a24}
\ncline{a24}{a44} 
\ncline{a44}{a64}^{$\pi_2$}
\ncline{a64}{a84}
\ncline{a26}{a46}
\ncline{a46}{a66}^{$\pi_1$}
\ncline{a26}{a24}
\ncline{a24}{a22}>{$(\tilde f_U)^\partial$}
\ncline{a22}{a20}
\ncline{a48}{a46}
\ncline{a46}{a44}
\ncline{a44}{a42}>{$(\tilde f)^\partial$}
\ncline{a42}{a40}
\ncline{a66}{a64}
\ncline{a64}{a62}>{$\tilde f_{\pi}$}
\ncline{a62}{a60}
\ncline{a40}{a60}

\end{pspicture}
\end{center}
~\\

\noindent This says exactly that \\
$S = (U_{G_3})^\partial (K)/ (\tilde f_U)^\partial
(  (U_{G_2})^\partial (K))$ is isomorphic to $Ker (\tilde f_\pi)  / {\pi_1((\tilde
G_1 )^\partial (K)/ (U_{G_1})^\partial (K))}$, that is, to $(G_1(K) \cap
{G_2}^\sharp )/ {G_1}^\sharp$.
\smallskip 

\noindent It follows in particular that 
 
\noindent $0\longrightarrow {G_1}^\sharp \longrightarrow {G_2}^\sharp 
  \stackrel{f_\pi}\longrightarrow {G_3}^\sharp \longrightarrow 0$ 
is exact\\ if and   only if \\  
$0\longrightarrow (U_{G_1})^\partial  (K) \longrightarrow( U_{G_2})^\partial (K)
  \stackrel{(\tilde f_U)^\partial}\longrightarrow  (U_{G_3})^\partial (K)
  \longrightarrow 0$ is exact\\
if and only if $(\tilde f_U)^\partial$  is surjective. 

\smallskip 

\noindent This is also equivalent to the exactness of the
sequence $0\longrightarrow U_{G_1} (\overline K)  \longrightarrow U_{G_2} (\overline K)
  \stackrel{(\tilde f_U)}\longrightarrow  U_{G_3} (\overline K)
  \longrightarrow 0$. In characteristic $0$, one direction follows from the exactness of the $\partial$
  functor on groups with a D-structure. For the other
  direction suppose that the sequence of the $(U_{G_i})^\partial (K)$'s is
  exact. For each $i$, $U_{G_i}^\partial (K)$ 
  has transcendence degree equal to 
the dimension of the algebraic group $U_{G_i}$ (Fact \ref{ftDstructure}). It follows that 
 $dim U_{G_1} + dim U_{G_3} = dim U_{G_2}$ (by additivity of the transcendence degree).
Being vector groups, the sequence of the $U_{G_i}$'s is exact.
In characteristic $p$, this is a direct consequence of Lemmas \ref{UG} and \ref{Kbar}. 
\end{proof}

\medskip 

We can now give a uniform proof of the main result which relates exactness of
the $\sharp$ functor to questions of descent, restricted, in char. $p$
to the class of {\em ordinary } semiabelian varieties. It is the uniform version of Proposition \ref{Maindescentcarp}.

\begin{proposition}\vlabel{Maindescent}
Let 
$0\rightarrow G_1 \rightarrow G_2 \rightarrow G_3\rightarrow 0$ be an
  exact sequence of (ordinary in char.p) semiabelian varieties defined over $ K $. Suppose
  that $G_1$ and $G_3$ descend to the constants of $K$.

Then the sequence $0\rightarrow G_1^\sharp \rightarrow G_2^\sharp \rightarrow G_3^\sharp \rightarrow 0$ remains exact if and only if  $G_2$ also descends 
  to the constants.  
\end{proposition}

\begin{proof} 
Without loss of generality, we can suppose that $G_1$ is a semiabelian subvariety of $G_2$, and that $G_1=(G_1')_K$
and $G_3=(G_3')_K$, where $G_1'$ and $G_3'$  are semiabelian varieties over ${\cal C}$, the field of constants
of $K$. 

If $G_2$ descends to the constants, then up to isomorphism, we can suppose
that $G_2=(G_2')_K$ for some $G_2'$ over the constants, so for every $i$,
${G_i}^\sharp = G_i' ({\cal C})$. And  then $G_1(K) \cap G_2^\sharp =
G_1(K) \cap G_2 ({\cal C}) = G_1 ({\cal C}) = G_1^\sharp$. 

\noindent For the converse, suppose that $0\rightarrow
{G_1}^\sharp (K) \rightarrow {G_2}^\sharp (K) \rightarrow {G_3}^\sharp (K)
\rightarrow 0$ is exact.  

\noindent By Propostion \ref{exactnessofU}, 
$0\rightarrow U_{G_1} (\overline K) \rightarrow U_{G_2} (\overline K) \rightarrow
U_{G_3} (\overline K)\rightarrow 0$  is also exact. We know that (see Fact \ref{Dunipotent} in characteristic $0$ and Proposition \ref{U=W} in characteristic $p$) as $G_1$ and
$G_3$ descend to the constants, $U_{G_1}(\overline K) = W_1(\overline K)$ and $U_{G_3}(\overline K) = W_3(\overline K)$.
Consider the dimensions, as vector spaces in characteristic $0$ or as free $\Zp$-modules in characteristic $p$, of the $U_{G_i}(\overline K)$'s. By
exactness,  $dim (U_{G_2}(\overline K)) = dim (U_{G_1}(\overline K)) + dim (U_{G_3}(\overline K))$. But we also have
  that $dimW_2(\overline K) = dim W_1 (\overline K)+ dim W_3 (\overline
  K)$ (this follows from exactness of the functor $G\mapsto \tilde G$, which 
   is clear in characteristic $p$, and  proved in Appendix B for
  characteristic $0$). So $dim U_{G_2}(\overline K) = dim W_2(\overline K)$ and  hence $U_{G_2}(\overline K) = W_2 (\overline K)$, 
that is,  again by Fact \ref{Dunipotent} in characteristic $0$ and
Proposition \ref{U=W} in characteristic $p$,  $G_2$ descends to the
  constants. 
\end{proof}


Hence we obtain in arbitrary characteristic the analogue of Corollary \ref{nonexactexamplecarp}, with the same proof.

\begin{corollary}\vlabel{nonexactexample} 
For any ordinary 
abelian variety $A$ defined over the
  constants of $K$,  there exists an exact sequence over $K$, 
$$  0 \longrightarrow {\mathbb G}_m \longrightarrow H \longrightarrow A_K \longrightarrow 0 $$  such that 
$$  0 \longrightarrow {\mathbb G}_m^{\sharp} \longrightarrow H^{\sharp} \longrightarrow (A_K)^{\sharp} \longrightarrow 0$$
is not exact. 
\end{corollary}

We have given some examples of non exactness in characteristic $p$ in Section \ref{further}, even for abelian varieties.
In characteristic $0$, the situation is completely  different for
abelian varieties as shown in the next Proposition, which is a direct consequence of Proposition \ref{exactnessofU}.

\begin{proposition} \vlabel{exactchar0abelian} (Characteristic 0)  Let
  $0 \longrightarrow A \longrightarrow  B \longrightarrow C
  \longrightarrow 0$ be an exact sequence of abelian varieties over $K$.
Then the induced sequence  
$0 \longrightarrow A^\sharp \longrightarrow  B^\sharp \longrightarrow C^\sharp
  \longrightarrow 0$  is also  exact.
\end{proposition}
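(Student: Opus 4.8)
The plan is to reduce, via Proposition \ref{exactnessofU}, to a statement purely about the vector groups $U_{A},U_{B},U_{C}$, and then to exploit the one feature distinguishing abelian varieties from general semiabelian varieties here, namely Poincar\'e reducibility. First I would invoke Proposition \ref{exactnessofU}: since we are in characteristic $0$, the sequence $0\to A^{\sharp}\to B^{\sharp}\to C^{\sharp}\to 0$ is exact if and only if the sequence of vector groups
$$0\longrightarrow U_{A}\longrightarrow U_{B}\stackrel{\tilde f_{U}}{\longrightarrow}U_{C}\longrightarrow 0$$
is exact (condition (iv)). The Claim inside the proof of that proposition already gives $\mathrm{Ker}(\tilde f_{U})=U_{A}$, so this sequence is automatically left exact; as the $U_{G}$ are connected vector groups, it remains only to show $\dim U_{B}=\dim U_{A}+\dim U_{C}$, equivalently that $\tilde f_{U}$ is surjective.

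Next I would record the functoriality needed. The universal vectorial extension $G\mapsto\tilde G$, the vector group $W_{G}$, and the $D$-structure on $\tilde G$ (unique by Fact \ref{UnicityDstructure}) are all functorial in $G$. In particular an isogeny $\phi:G\to G'$ of abelian varieties lifts to $\tilde\phi:\tilde G\to\tilde G'$, again an isogeny, carrying $W_{G}$ into $W_{G'}$ and respecting the $D$-structures by uniqueness. Hence $\tilde\phi$ sends the maximal $D$-subgroup $U_{G}$ of $W_{G}$ into $U_{G'}$ with finite kernel; applying the same to a complementary isogeny $G'\to G$ one gets $\dim U_{G}=\dim U_{G'}$, so $\dim U_{G}$ is an isogeny invariant. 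Moreover, by the universal property $\widetilde{A\times C}=\tilde A\times\tilde C$ with the product $D$-structure, so $W_{A\times C}=W_{A}\times W_{C}$; since a $D$-subgroup of a product projects to $D$-subgroups of the factors, the maximal $D$-subgroup contained in $W_{A}\times W_{C}$ is exactly $U_{A}\times U_{C}$, giving $\dim U_{A\times C}=\dim U_{A}+\dim U_{C}$.

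Finally, here is where abelian varieties are special: by Poincar\'e reducibility the exact sequence $0\to A\to B\to C\to 0$ splits up to isogeny, so $B$ is isogenous to $A\times C$. Combining the two previous observations,
$$\dim U_{B}=\dim U_{A\times C}=\dim U_{A}+\dim U_{C},$$
whence $\tilde f_{U}$ is a surjection of connected vector groups with kernel $U_{A}$, and the sequence of $U_{G}$'s is exact. By Proposition \ref{exactnessofU} the $\sharp$-sequence is then exact.

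The main obstacle, and the point where the argument genuinely uses that the $G_{i}$ are abelian rather than merely semiabelian, is precisely this last step: the isogeny-invariance and additivity of $\dim U_{G}$ combined with Poincar\'e reducibility. For a general semiabelian variety the extension need not split up to isogeny, and indeed exactness can fail (cf.\ Corollary \ref{nonexactexample}), so I expect the careful verification of the functorial compatibility of $U_{G}$ with isogenies and products to be where the real work lies.
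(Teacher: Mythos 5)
Your proof is correct and is essentially the paper's own argument: the paper likewise combines Poincar\'e reducibility ($B$ isogenous to $A\times C$), the induced isogeny of $D$-groups $\widetilde{A\times C}=\tilde A\times\tilde C\to\tilde B$ giving an isogeny $U_A\times U_C\to U_B$ of vector groups, and Proposition \ref{exactnessofU} to conclude. The only difference is that you spell out the functoriality and isogeny-invariance details (kernel identification, maximality of $U_A\times U_C$, dimension count) that the paper leaves implicit.
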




\begin{proof}  By Poincar\'e  complete reducibility, $A\times C$ is isogenous to $B$, inducing an isogeny of
$\widetilde{A\times C} = \tilde A \times \tilde C$ with $\tilde B$. As this is also an isogeny of D-groups it induces an isogeny between $U_{A\times C} = U_{A}\times U_{C}$ and $U_{B}$.
As these are vector groups it follows that the induced sequence
$0 \longrightarrow U_{A} \longrightarrow  U_{B} \longrightarrow U_{C}
  \longrightarrow 0$  is exact. Hence by Proposition \ref{exactnessofU}, so is
  $0 \longrightarrow A^\sharp \longrightarrow  B^\sharp \longrightarrow C^\sharp
  \longrightarrow 0$. 
\end{proof}


\section{Additional remarks and questions}

\vlabel{final}

1. In characteristic $p$, the counterexamples to exactness of the
induced $\sharp$ sequence arise from the following situation: we have
two connected  commutative definable groups $G_1 <G_2$ which are not
divisible. We consider $D_2$ the biggest divisible subgroup
(which is infinitely definable) of $G_2$. The counterexamples are
exactly the cases when $G_1 \cap D_2$ is {\em not } divisible. One can
ask the same question also for other classes of groups, in particular
for commutative algebraic groups: Given $G_1 <G_2$ two commutative
connected algebraic groups defined over some algebraically closed field
$K$ of characteristic $p$, consider $D <G_2$, the biggest divisible
subgroup of $G_2$. It is easy to check that D is a closed subgroup
of $G_2$, also defined over $K$. 

Using the characterizations of the groups $p^\infty G(K)$, given in
terms of the  Weil restrictions $\Pi_{K/K^{p^n}} G$ in \cite{BeDe}, one
  can deduce easily  from our examples that the same phenomenon occurs for
 commutative algebraic groups.

2. In previous versions of this paper, we had  mentioned an open question which we found 
quite intriguing.
Let $A$ be an abelian variety
   defined over ${\mathbb F}_p(t)$ and let $K_0$ denote the  separable closure
   of ${\mathbb F}_p(t)$. We can consider $A(K_0)$ and $p^\infty A(K_0)$. As we
   recalled in section \ref{sharp},  $p^\infty A(K_0) $ is the
   biggest divisible subgroup of $A(K_0) $ and contains all the
   torsion of $A$ which is prime to $p$. 
The question was whether $p^\infty
   A(K_0)$ could contain any non-torsion element. Note that if
   $A$ is defined over ${K_0}^{p^\infty} = \overline {{\mathbb F}_p}$, then
   $p^\infty A(K_0)= A(\overline {{\mathbb F}_p})$, where indeed every element
   is torsion. 
Note also that, from the beginning of section \ref{sharp}, in
   characteristic $p$, when dealing with $A^\sharp = p^\infty A(K)$, we
   suppose that $K$ is $\omega_1$-saturated, which ensures that
   $A^\sharp$ contains elements which are not torsion.
This question was answered in some particular cases in \cite{Benoist2}, and in full generality by D. R\"ossler who showed in \cite{Rossler} that $p^{\infty}A(K_0)$ contains only torsion points.

In characteristic $0$ there are results along these lines, sometimes
going under the expression ``Manin's theorem of the kernel". A formal
statement and proof (depending on results of Manin, Chai,..) appears
in \cite{BePi} (Corollary K.3 of the Appendix), and says that if $A$
is an abelian variety over the algebraic closure $K_{0}$ say of
${\mathbb C}(t)$, equipped with a derivation with field of constants  $\mathbb 
C$, and $A$ has $\mathbb C$-trace $0$, then $A^{\sharp}(K_{0})$ is precisely the group of torsion points of $A$. This, together with the fact (see \cite{BePi}, section 6 and \cite{MarkerPillay}, Lemma 2.2) that $A^\sharp (K_0) = A^\sharp({K_0}^{diff})$, shows that 
$A^\sharp({K_0}^{diff})$ is the group of torsion points of $A$. 

\section*{Appendix A}


Here is a proof of 

Fact \ref{separablemorphisms} {\em Let $G, H$ be two connected 
algebraic  groups defined over $K$ and $f$ a
dominant separable homomorphism from $G$ to $H$ (equivalently a
surjective separable homomorphism from $G(\overline K)$ to $H(\overline
K)$). Then $f$ takes $G(K)$ surjectively onto $H(K)$.}

\begin{proof} 

Note first that we can suppose without loss of generality that $K$ is
sufficiently saturated. Let $K_0$ be a small field over which everything
is defined. 
Let  $h \in H(K)$ be  a generic point of $H$ over $K_{0}$ (in the sense
of algebraic geometry). As f is dominant, there is some generic $g$ of   
$G(\overline K)$ such that $f(g) = h$. Separability of $f$ means that 
 $K_{0}(g)$ is a separable extension of
 $K_{0}(h)$, hence contained in a separable closure of
$K_{0}(h)(a_{1},..,a_{n})$ 
for some $a_{i}$ which are algebraically independent over
$K_{0}(h)$. Choosing, by saturation of $K$, $b_{1},..,b_{n}\in K$, 
algebraically independent over $K_{0}(h)$, and an isomorphism taking
the separable closure of $K_{0}(h)(a_{1},..,a_{n})$ to 
the separable closure of $K_{0}(h)(b_{1},..,b_{n})$, we find
$g'\in G(K)$ such that $f(g') = h$. 
\end{proof}

\section*{Appendix B }

In this appendix, we give a detailed proof of the fact used in Lemma
\ref{exactnessofGtilde}, namely that the functor "universal extension",
on the category of semiabelians varieties in characteristic $0$, is
exact. As we could not find any references for this fact, maybe well
known, we give the details here, thanks to the help of D. Bertrand. We
refer to \cite{Bertrand} for discussion about related questions. Note
also that the point of view of rigidified extensions used in
\cite{MazurMessing} should give this  result more directly, but we keep here a point of view which model theorists are probably more familiar with.\\
Everything here is over an algebraically closed field $K$ of characteristic $0$, and every algebraic group is commutative.\\

Recall that the universal extension of an algebraic group $A$ by a vector group (when it exists) is an extension
$$ 0 \to W_A \to \tilde A \stackrel{\pi_A}{\to} A \to 0,$$
where $W_A$ is a vector group, characterized by the following universal property:
for every extension $f:G \to A$ of $A$ by a vector group, there exists a unique homomorphism of algebraic groups $g:\tilde A \to G$ 
such that $\pi_A=f\circ g$.\\
~\\
It follows from \cite{Rosenlicht2} that abelian varieties admit such universal extension.
If $S$ is a semiabelian variety, with abelian part $A$, $S$ admits a universal extension by a vector group, which is given by $\tilde S=S\times_A \tilde A$ (see \cite{BePi}); note that $W_S=W_A$.\\
~\\
We should now explain how $\tilde ~$ is defined as a functor on the category of semiabelian varieties.\\
First recall some notations and constructions from \cite{Serrebook}, chap. 7. For algebraic groups $A$ and $B$, $\text{Ext}(A,B)$ is the set of extensions $0 \to B \to C \to A \to 0$ of $A$ by $B$, up to isomorphism of extensions. It is equipped with a structure of a group.\\
~\\
If $C\in \text{Ext}(A,B)$, and $g:B\to B'$, $g_*(C)$ is the unique element $C'\in \text{Ext}(A,B')$ such that there is some $G:C\to C'$ 
such that the following diagram commutes (actually it does not depend only on $C$, but on $C$ as an extension of $A$ by $B$):\\
~\\
\begin{center}
\begin{pspicture}(8,1)
\rput(0,1){\rnode{A}{$0$}}
\rput(2,1){\rnode{B}{$B$}}
\rput(4,1){\rnode{C}{$C$}}
\rput(6,1){\rnode{D}{$A$}}
\rput(8,1){\rnode{E}{$0$}}
\rput(0,0){\rnode{F}{$0$}}
\rput(2,0){\rnode{G}{$B'$}}
\rput(4,0){\rnode{H}{$C'$}}
\rput(6,0){\rnode{I}{$A$}}
\rput(8,0){\rnode{J}{$0$}}
\psset{arrows=->,nodesep=3pt,shortput=tablr,linewidth=0.1pt}
\ncline{A}{B}
\ncline{B}{C} 
\ncline{C}{D}^{$\pi$}
\ncline{D}{E}
\ncline{F}{G}
\ncline{G}{H}
\ncline{H}{I}
\ncline{I}{J}
\ncline{B}{G}<{$g$}
\ncline{C}{H}<{$G$}
\ncline{D}{I}<{id} 
\end{pspicture}
\end{center}
~\\
Note that such a $G$ does not need to be unique. By diagram chasing, we see that $G':C\to C'$ satisfies the same property as $G$ if and only if it can be written $G'=G+\Delta\circ \pi$ for some
$\Delta \in \text{Hom}(A,B')$.\\
~\\
Similarly, if $C\in \text{Ext}(A,B)$, and $f:A'\to A$, $f^*(C)$ is the unique element $C'\in \text{Ext}(A',B)$ such that there is some $F:C'\to C$ 
such that the following diagram commutes:\\
~\\
\begin{center}
\begin{pspicture}(8,1)
\rput(0,1){\rnode{A}{$0$}}
\rput(2,1){\rnode{B}{$B$}}
\rput(4,1){\rnode{C}{$C$}}
\rput(6,1){\rnode{D}{$A$}}
\rput(8,1){\rnode{E}{$0$}}
\rput(0,0){\rnode{F}{$0$}}
\rput(2,0){\rnode{G}{$B$}}
\rput(4,0){\rnode{H}{$C'$}}
\rput(6,0){\rnode{I}{$A'$}}
\rput(8,0){\rnode{J}{$0$}}
\psset{arrows=->,nodesep=3pt,shortput=tablr,linewidth=0.1pt}
\ncline{A}{B}
\ncline{B}{C} 
\ncline{C}{D}^{$\pi$}
\ncline{D}{E}
\ncline{F}{G}
\ncline{G}{H}
\ncline{H}{I}
\ncline{I}{J}
\ncline{G}{B}<{id}
\ncline{H}{C}<{$F$}
\ncline{I}{D}<{$f$} 
\end{pspicture}
\end{center}
~\\ 
As before, $F':C'\to C$ satisfies the same property as $F$ if and only if $F'=F+\Delta \circ \pi$ for some $\Delta \in \text{Hom}(A',B)$.\\
We can give an explicit description of $f^*(C)$: it is (isomorphic to) $C\times_A A'$, viewed as an extension of $A'$ via the second projection, and with map to $C$ given by the first projection.\\
~\\
An important result is Prop. 2 of Chap. 7 in \cite{Serrebook}:
An exact sequence $ 0 \to A_1 \stackrel{f}{\to} A_2 \stackrel{g}{\to} A_3 \to 0$ and an algebraic group $H$ induce an exact sequence (*):
$$
0 \to \text{Hom}(A_3,H) \stackrel{\cdot \circ g}{\to} \text{Hom}(A_2,H) \stackrel{\cdot \circ f}{\to} \text{Hom}(A_1,H) \stackrel{d}{\to}  \text{Ext}(A_3,H) \stackrel{g^*}{\to} \text{Ext}(A_2,H) \stackrel{f^*}{\to} \text{Ext}(A_1,H)
$$ 
where $d(\phi)=\phi_*(A_2)\in \text{Ext}(A_3,H)$ for $\phi \in \text{Hom}(A_1,H)$.\\
~\\
Note that in the following situation
~\\
\begin{center}
\begin{pspicture}(8,1)
\rput(0,1){\rnode{A}{$0$}}
\rput(2,1){\rnode{B}{$W_A$}}
\rput(4,1){\rnode{C}{$\tilde A$}}
\rput(6,1){\rnode{D}{$A$}}
\rput(8,1){\rnode{E}{$0$}}
\rput(0,0){\rnode{F}{$0$}}
\rput(2,0){\rnode{G}{$W$}}
\rput(4,0){\rnode{H}{$G$}}
\rput(6,0){\rnode{I}{$A$}}
\rput(8,0){\rnode{J}{$0$}}
\psset{arrows=->,nodesep=3pt,shortput=tablr,linewidth=0.1pt}
\ncline{A}{B}
\ncline{B}{C} 
\ncline{C}{D}^{$\pi_A$}
\ncline{D}{E}
\ncline{F}{G}
\ncline{G}{H}
\ncline{H}{I}
\ncline{I}{J}
\ncline{C}{H}<{$F$}
\ncline{D}{I}<{id} 
\end{pspicture}
\end{center}
~\\
where $A$ is a semiabelian variety, $G$ an extension of $A$ by a vector group $W$, and $F$ given by the universal property, $F$ must map the unipotent part $W_A$ of $\tilde A$ into the unipotent part $W$ of $G$. Hence by definition, the restriction $F_W:W_A \to W$ is such that 
$G=(F_{W})_*(\tilde A)$. Furthermore, since $\text{Hom}(A,W)=0$, $F_W$ completely determines $F$. Hence finding $F$ as in the universal property is equivalent to finding the unique $f:W_A\to W$ such that $f_*(\tilde A)=G$.\\
We will now use this characterization 
in order to build $\tilde f:\tilde A \to \tilde B$ for $f:A\to B$ an homomorphism of semiabelian varieties. For such an $f$, define $Tf$ as the unique $Tf:W_A\to W_B$ such that $(Tf)_*(\tilde A)=f^*(\tilde B)$. Because of the definitions, we get homomorphisms
$G:\tilde A \to (Tf)_*(\tilde A)$ and $F:f^*(\tilde B) \to \tilde B$ making the following diagram commutative
~\\
\begin{center}
\begin{pspicture}(12,2)
\rput(0,2){\rnode{A}{$0$}}
\rput(2,2){\rnode{B}{$W_B$}}
\rput(6,2){\rnode{C}{$\tilde B$}}
\rput(10,2){\rnode{D}{$B$}}
\rput(12,2){\rnode{E}{$0$}}
\rput(0,1){\rnode{F}{$0$}}
\rput(2,1){\rnode{G}{$W_B$}}
\rput(6,1){\rnode{H}{$f^*(\tilde B)=(Tf)_*(\tilde A)$}}
\rput(10,1){\rnode{I}{$A$}}
\rput(12,1){\rnode{J}{$0$}}
\rput(0,0){\rnode{K}{$0$}}
\rput(2,0){\rnode{L}{$W_A$}}
\rput(6,0){\rnode{M}{$\tilde A$}}
\rput(10,0){\rnode{N}{$A$}}
\rput(12,0){\rnode{O}{$0$}}
\psset{arrows=->,nodesep=3pt,shortput=tablr,linewidth=0.1pt}
\ncline{A}{B}
\ncline{B}{C} 
\ncline{C}{D}^{$\pi_B$}
\ncline{D}{E}
\ncline{F}{G}
\ncline{G}{H}
\ncline{H}{I}
\ncline{I}{J}
\ncline{G}{B}<{id}
\ncline{H}{C}<{$F$}
\ncline{I}{D}<{$f$} 
\ncline{K}{L}
\ncline{L}{M} 
\ncline{M}{N}_{$\pi_A$}
\ncline{N}{O}
\ncline{L}{G}<{$Tf$}
\ncline{M}{H}<{$G$}
\ncline{N}{I}<{id}
\end{pspicture}
\end{center}
~\\   
Now we define $\tilde f=F\circ G$, it makes the following diagram commutative
~\\
\begin{center}
\begin{pspicture}(8,1)
\rput(0,1){\rnode{A}{$0$}}
\rput(2,1){\rnode{B}{$W_B$}}
\rput(4,1){\rnode{C}{$\tilde B$}}
\rput(6,1){\rnode{D}{$B$}}
\rput(8,1){\rnode{E}{$0$}}
\rput(0,0){\rnode{F}{$0$}}
\rput(2,0){\rnode{G}{$W_B$}}
\rput(4,0){\rnode{H}{$\tilde A$}}
\rput(6,0){\rnode{I}{$A$}}
\rput(8,0){\rnode{J}{$0$}}
\psset{arrows=->,nodesep=3pt,shortput=tablr,linewidth=0.1pt}
\ncline{A}{B}
\ncline{B}{C} 
\ncline{C}{D}^{$\pi_B$}
\ncline{D}{E}
\ncline{F}{G}
\ncline{G}{H}
\ncline{H}{I}_{$\pi_A$}
\ncline{I}{J}
\ncline{G}{B}<{$Tf$}
\ncline{H}{C}<{$\tilde f$}
\ncline{I}{D}<{$f$} 
\end{pspicture}
\end{center}
~\\ 
and it is the unique such (once again using that $\text{Hom}(A,W_B)=0$).\\
With these characterizations, it is easy to show that for homomorphisms of semiabelian varieties $A \stackrel{f}{\to} B \stackrel{g}{\to} C$, 
$\tilde{gf}=\tilde g \tilde f$: the calculation $(gf)^*(\tilde C)=f^*g^*(\tilde C)=f^*(Tg)_*(\tilde B)=(Tg)_*f^*(\tilde B)=(Tg)_*(Tf)_*(\tilde A)=(TgTf)_*(\tilde A)$ shows that $T(gf)=TgTf$, and the result follows (the basic results that we use here about $f^*$ and $g_*$ can be found in \cite{Rosenlicht2} or \cite{Serrebook}).\\
~\\
Now we prove exactness.\\
We will use the natural identification of $W_A$ with the dual of $\text{Ext}(A,\Ga)$. More precisely, if $A$ is an abelian variety, the map
$$
\begin{matrix}
\text{Hom}(W_A,\Ga) & \to & \text{Ext}(A,\Ga) \cr
\phi & \mapsto & \phi_* \tilde A \cr
\end{matrix}
$$
is an isomorphism (see \cite{Rosenlicht2}, Prop. 11).\\
The same result is valid for a semiabelian variety $ 0 \to T \to S \stackrel{f}{\to} A \to 0$ instead of $A$. Indeed, since $\text{Hom}(T,\Ga)=\text{Ext}(T,\Ga)=0$, it follows from the exact sequence (*) that $f^*:\text{Ext}(A,\Ga)\to \text{Ext}(S,\Ga)$ is an isomorphism. But by construction, $\tilde S=f^* \tilde A \in \text{Ext}(S,W_A)$, and for $\phi\in \text{Hom}(W_A,\Ga)$, 
$\phi_* \tilde S=\phi_* f^* \tilde A=f^* \phi_* \tilde A$, hence the result comes from the case of abelian varieties.\\

\begin{claim}For $f:A\to B$ an homomorphism of semiabelian varieties, the following diagram commutes:
~\\
\begin{center}
\begin{pspicture}(4,2)
\rput(0,2){\rnode{A}{Hom$(W_A,\Ga)$}}
\rput(4,2){\rnode{B}{Ext$(A,\Ga)$}}
\rput(0,0){\rnode{C}{Hom$(W_B,\Ga)$}}
\rput(4,0){\rnode{D}{Ext$(B,\Ga)$}}
\psset{arrows=->,nodesep=3pt,shortput=tablr,linewidth=0.1pt}
\ncline{A}{B}^{$\simeq$}
\ncline{C}{D}^{$\simeq$} 
\ncline{C}{A}<{$\cdot \circ Tf$}
\ncline{D}{B}>{$f^*$}
\end{pspicture}
\end{center}
~\\     
\end{claim}
Indeed, for $\phi \in \text{Hom}(W_B,\Ga)$, $(\phi \circ Tf)_*\tilde A=\phi_*(Tf)_*\tilde A=\phi_*f^*\tilde B=f^*\phi_*\tilde B$.\\
~\\
Now we consider an exact sequence of semiabelian varieties
$$
0 \to A \stackrel{f}{\to} B \stackrel{g}{\to} C \to 0.
$$
\begin{claim}The induced sequence is exact :
$$
0 \to \text{Ext}(C,\Ga) \stackrel{g^*}{\to} \text{Ext}(B,\Ga) \stackrel{f^*}{\to} \text{Ext}(A,\Ga) \to 0.
$$
\end{claim}
We use the exact sequence (*) and the fact that $\text{Hom}(A,\Ga)=0$ to get the exactness on the left and on the middle. For the surjectivity, we just have to use dimensions and connectedness of these groups, since the dimension of
$\text{Ext}(A,\Ga)$ equals the dimension of the abelian part of $A$.
\begin{proposition}
The induced sequence is exact :
$$
0 \to \tilde A \stackrel{\tilde f}{\to} \tilde B \stackrel{\tilde g}{\to} \tilde C \to 0.
$$
\end{proposition}
\begin{proof}
In the following commutative diagram
~\\
\begin{center}
\begin{pspicture}(8,4)
\rput(0,3){\rnode{A}{$0$}}
\rput(2,3){\rnode{B}{$W_A$}}
\rput(4,3){\rnode{C}{$W_B$}}
\rput(6,3){\rnode{D}{$W_C$}}
\rput(8,3){\rnode{E}{$0$}}
\rput(0,2){\rnode{F}{$0$}}
\rput(2,2){\rnode{G}{$\tilde A$}}
\rput(4,2){\rnode{H}{$\tilde B$}}
\rput(6,2){\rnode{I}{$\tilde C$}}
\rput(8,2){\rnode{J}{$0$}}
\rput(0,1){\rnode{K}{$0$}}
\rput(2,1){\rnode{L}{$A$}}
\rput(4,1){\rnode{M}{$B$}}
\rput(6,1){\rnode{N}{$C$}}
\rput(8,1){\rnode{O}{$0$}}
\rput(2,0){\rnode{P}{$0$}}
\rput(4,0){\rnode{Q}{$0$}}
\rput(6,0){\rnode{R}{$0$}}
\rput(2,4){\rnode{S}{$0$}}
\rput(4,4){\rnode{T}{$0$}}
\rput(6,4){\rnode{U}{$0$}}
\psset{arrows=->,nodesep=3pt,shortput=tablr,linewidth=0.1pt}
\ncline{A}{B}
\ncline{B}{C}^{$Tf$}
\ncline{C}{D}^{$Tg$}
\ncline{D}{E}
\ncline{F}{G}
\ncline{G}{H}^{$\tilde f$}
\ncline{H}{I}^{$\tilde g$}
\ncline{I}{J}
\ncline{B}{G}
\ncline{C}{H}
\ncline{D}{I} 
\ncline{K}{L}
\ncline{L}{M}^{$f$} 
\ncline{M}{N}^{$g$}
\ncline{N}{O}
\ncline{G}{L}
\ncline{H}{M}
\ncline{I}{N}
\ncline{L}{P}
\ncline{M}{Q}
\ncline{N}{R}
\ncline{S}{B}
\ncline{T}{C}
\ncline{U}{D}
\end{pspicture}
\end{center}
~\\
the columns and the bottom row are exact. The top row is exact by the two claims and duality. It follows that the middle row is exact.  
\end{proof}

\bigskip

\noindent F. Benoist,  franck.benoist@math.u-psud.fr\\
Univ. Paris-Sud \\
Department of Mathematics, Bat. 425\\
F-91405 Orsay Cedex, France.

\medskip 

\noindent E. Bouscaren,   elisabeth.bouscaren@math.u-psud.fr\\ 
CNRS - Univ. Paris-Sud \\
Department of Mathematics, Bat. 425\\
F-91405 Orsay Cedex, France.

\medskip 

\noindent A. Pillay,  apillay@nd.edu\\
Department of Mathematics\\
University of Notre Dame\\
281 Hurley Hall\\
Notre Dame, IN 46556, USA.


\end{document}